\newcommand*{\mailto}[1]{\href{mailto:#1}{\nolinkurl{#1}}}
\newcommand{\arxiv}[1]{\href{http://arxiv.org/abs/#1}{arXiv:#1}}
\def\theequation{\@arabic\c@equation}
\newcommand{\bbR}{{\mathbb{R}}}
\newcommand{\bbC}{{\mathbb{C}}}
\newcommand{\cB}{{\mathcal B}}
\newcommand{\cC}{{\mathcal C}}
\newcommand{\cD}{{\mathcal D}}
\newcommand{\cF}{{\mathcal F}}
\newcommand{\cG}{{\mathcal G}}
\newcommand{\cH}{{\mathcal H}}
\newcommand{\cL}{{\mathcal L}}
\newcommand{\cM}{{\mathcal M}}
\newcommand{\cN}{{\mathcal N}}
\newcommand{\no}{\nonumber}
\newcommand{\lb}{\label}
\newcommand{\f}{\frac}
\newcommand{\ol}{\overline}
\newcommand{\wti}{\widetilde}
\newcommand{\tr}{\text{\rm{tr}}}
\newcommand{\ran}{\text{\rm{ran}}}
\newcommand{\ind}{\text{\rm{ind}}}
\newcommand{\dom}{\text{\rm{dom}}}
\newcommand{\bi}{\bibitem}
\renewcommand{\Im}{\text{\rm Im}}
\numberwithin{equation}{section}
\newtheorem{theorem}{Theorem}[section]
\newtheorem{lemma}[theorem]{Lemma}
\newtheorem{corollary}[theorem]{Corollary}
\newtheorem{proposition}[theorem]{Proposition}
\theoremstyle{definition}
\newtheorem{definition}[theorem]{Definition}
\newtheorem{hypothesis}[theorem]{Hypothesis}
\newtheorem{remark}[theorem]{Remark}
\newtheorem{example}[theorem]{Example}
\begin{document}

\title[Dirichlet-to-Neumann Maps, Weyl Functions, and the Index]{Dirichlet-to-Neumann Maps, Abstract Weyl--Titchmarsh $M$-Functions, and a Generalized Index of Unbounded Meromorphic 
Operator-Valued Functions}

\author[J.\ Behrndt]{Jussi Behrndt}  
\address{Institut f\"ur Numerische Mathematik, Technische Universit\"at 
Graz, Steyrergasse 30, 8010 Graz, Austria}  
\email{\mailto{behrndt@tugraz.at}}
\urladdr{\url{http://www.math.tugraz.at/~behrndt/}}

\author[F.\ Gesztesy]{Fritz Gesztesy} 
\address{Department of Mathematics,
University of Missouri, Columbia, MO 65211, USA}
\email{\mailto{gesztesyf@missouri.edu}}
\urladdr{\url{https://www.math.missouri.edu/people/gesztesy}}

\author[H.\ Holden]{Helge Holden}
\address{Department of Mathematical Sciences,
Norwegian University of
Science and Technology, NO--7491 Trondheim, Norway}
\email{\mailto{holden@math.ntnu.no}}
\urladdr{\href{http://www.math.ntnu.no/~holden/}{http://www.math.ntnu.no/\~{}holden/}}

\author[R.\ Nichols]{Roger Nichols}
\address{Mathematics Department, The University of Tennessee at Chattanooga, 
415 EMCS Building, Dept. 6956, 615 McCallie Ave, Chattanooga, TN 37403, USA}
\email{\mailto{Roger-Nichols@utc.edu}}
\urladdr{\url{http://www.utc.edu/faculty/roger-nichols/index.php}}

\date{\today}
\thanks{ J.B.\ and F.G.\ gratefully acknowledge support by the Austrian Science Fund (FWF), project
P 25162-N26. F.G.\ and H.H.\ were supported in part by the Research Council of Norway. 
R.N.\ gratefully acknowledges support from a UTC College of Arts and Sciences RCA Grant.} 
\subjclass[2010]{Primary: 47A53, 47A56. Secondary: 47A10, 47B07.}
\keywords{Index computations for meromorphic operator-valued functions, 
Dirichlet-to-Neumann maps, non-self-adjoint Schr\"odinger operators, boundary triples, Weyl functions, Donoghue-type $M$-functions.}

\begin{abstract}
We introduce a generalized index for certain meromorphic, unbounded, 
operator-valued functions. The class of functions is chosen such that 
energy parameter dependent Dirichlet-to-Neumann 
maps associated to uniformly elliptic partial differential operators, particularly, non-self-adjoint 
Schr\"odinger operators, on bounded Lipschitz domains, and abstract operator-valued 
Weyl--Titchmarsh $M$-functions and
Donoghue-type $M$-functions corresponding to closed extensions of symmetric operators belong to it.

The principal purpose of this paper is to prove index formulas that relate the difference of the 
algebraic multiplicities of the discrete eigenvalues of Robin realizations of non-self-adjoint Schr\"{o}dinger operators,
and more abstract pairs of closed operators in Hilbert spaces with the generalized index of the corresponding energy dependent
Dirichlet-to-Neumann maps and abstract Weyl--Titchmarsh $M$-functions, respectively.
\end{abstract}

\maketitle


{\scriptsize{\tableofcontents}}

\section{Introduction}

The principal purpose of this paper is to prove index formulas that relate the
algebraic multiplicities of the discrete eigenvalues of closed operators in Hilbert spaces with a certain
generalized index of a class of meromorphic, unbounded, closed, operator-valued functions, which have constant domains and are not necessarily Fredholm. In the following, we shall briefly illustrate the index formulas in our main applications and familiarize the reader with the structure of this article.

Let us first consider the Schr\"{o}dinger differential expression 
\begin{equation}\label{lintro}
\mathcal L=-\Delta+q 
\end{equation}
on a bounded Lipschitz domain $\Omega\subset\mathbb R^n$, $n\geq 2$, with a complex-valued, bounded, measurable potential $q\in L^\infty(\Omega)$.
Denote by $A_D$ the Dirichlet realization of $\mathcal L$ in $L^2(\Omega)$ and let $A_\Theta$ be a closed realization of $\mathcal L$ subject to
Robin-type boundary conditions of the form
\begin{equation}\label{robinintro}
 \Theta\gamma_D f=\gamma_N f,
\end{equation}
where $\gamma_D$ and $\gamma_N$ denote the Dirichlet and Neumann trace operator, and $\Theta$ is a bounded operator in $L^2(\partial\Omega)$;
for precise definitions of the trace maps and the operators $A_D$ and $A_\Theta$ we refer to Section~\ref{sec3}.
We emphasize that the differential expression \eqref{lintro} is non-symmetric and hence the Dirichlet and Robin realization $A_D$ and $A_\Theta$ are non-self-adjoint, and 
that, in addition, also the parameter $\Theta$ in the Robin boundary condition in \eqref{robinintro} is non-self-adjoint in general. 
Since the Lipschitz domain $\Omega$ is bounded, the spectra of the operators $A_D$ and $A_\Theta$ consist of isolated eigenvalues with finite algebraic multiplicities.
As one of our main results we show that the algebraic multiplicities $m_a(z_0;A_D)$ and $m_a(z_0;A_\Theta)$ of an eigenvalue $z_0$ of $A_D$ and $A_\Theta$ satisfy the generalized 
index formula
\begin{equation}\label{indexintro}
\wti{\ind}_{C(z_0; \varepsilon)}(D(\cdot)-\Theta)=m_a(z_0;A_D)-m_a(z_0;A_\Theta),
\end{equation}
where the generalized index $\wti{\ind}_{C(z_0; \varepsilon)}(\,\cdot\,)$ is defined below in \eqref{indexdef}, and $D(\cdot)$ denotes the energy parameter-dependent Dirichlet-to-Neumann map associated to the differential expression $\mathcal L$. The index formula \eqref{indexintro} remains valid for points $z_0$ in the resolvent set of $\rho(A_D)$ or $\rho(A_\Theta)$, in which case $m_a(z_0;A_D)=0$ or $m_a(z_0;A_\Theta)=0$, respectively.
However, since the values $D(z)$, $z \in\rho(A_D)$, of the Dirichlet-to-Neumann map are unbounded 
operators in $L^2(\partial\Omega)$,  
the classical concept of an index for a meromorphic, bounded, Fredholm operator-valued function  as introduced in \cite{GS71} (see also \cite[Chapter~XI.9]{GGK90} 
and \cite[Chapter~4]{GL09}) does not apply to $D(\cdot)-\Theta$
on the left-hand side of \eqref{indexintro}. 

Instead, it is necessary to specify a suitable class of meromophic operator-valued functions $M(\cdot)$ with values in the set of unbounded closed operators such that
on one hand the function $D(\cdot)-\Theta$ in \eqref{indexintro} is contained in this class, and on the other hand the generalized index
 \begin{equation}\label{indexdef}
\wti{\ind}_{C(z_0; \varepsilon)}(M(\cdot)) := \tr\bigg(\f{1}{2\pi i} 
\ointctrclockwise_{C(z_0; \varepsilon)} d\zeta \, 
\ol{M'(\zeta)} M(\zeta)^{-1}\bigg)      
\end{equation}  
is well-defined; here $C(z_0; \varepsilon)$ is the counterclockwise 
oriented circle centered at $z_0$ with radius $\varepsilon>0$ sufficiently small, and $\ol{M'(\zeta)}$ denotes the closure of the derivative of $M(\cdot)$ at $\zeta$.
This is the main purpose of the preliminary Section~\ref{sec2}, which is inspired by considerations in \cite{BGHN16} and \cite{GHN15}.
Here we collect a set of assumptions and define a class of meromorphic, unbounded, closed, operator-valued functions,
which are not necessarily Fredholm, such that 
the functions $\ol{M'(\cdot)}$ and $M(\cdot)^{-1}$ in the integrand in \eqref{indexdef} are both finitely meromorphic (see \cite{GGK90}, \cite{GL09}), 
and hence definition \eqref{indexdef} turns out to be meaningful. Although the generalized index in \eqref{indexdef} may not be integer-valued in general
(in contrast to the classical index, where the operator-valued version of the argument principle from \cite{GS71} or \cite[Theorem\ 4.4.1]{GL09} applies)
in our main applications \eqref{indexintro} and \eqref{indiwowintro} below it certainly is, since the 
right-hand side equals an integer.

The main objective of Section~\ref{sec3} is to prove the index formula \eqref{indexintro} in 
Theorem~\ref{k1}. Besides the differential expression $\mathcal L=-\Delta+q$ we also
consider the formal adjoint expression $\widetilde{\mathcal L}=-\Delta+\ol q$ and obtain an 
analogous index formula for the algebraic multiplicities of the eigenvalues of 
$A_D^*$ and $A_\Theta^*$ in Theorem~\ref{k2}.
The main ingredient in the proof of the index formula \eqref{indexintro} is the Krein-type resolvent formula in Theorem~\ref{k1} in which the difference of the 
resolvents of $A_\Theta$ and $A_D$ in $L^2(\Omega)$
is traced back to the boundary space $L^2(\partial\Omega)$ and the perturbation term 
$D(\cdot)-\Theta$. Such resolvent formulas are well-known
for the symmetric case (see, e.g.,  
\cite{AB09}, \cite{BL07}, \cite{BM14}, \cite{BMNW08}, 
\cite{GM11}, \cite{Ma10}, \cite{PR09}, \cite{Po12}) and in the context of dual pairs related formulas 
can be found, for instance, in \cite{BGW09} and \cite{MM02}; the Dirichlet-to-Neumann map 
$D(\cdot)$ has atracted a lot of attention in the recent past (see, e.g., \cite{AB09}--\cite{AM12}, \cite{BGMM16}--\cite{BR15a}, \cite{GM08}, \cite{GM11}, \cite{Po08}, \cite{PR09}, and the references therein). Although formally the index formula \eqref{indexintro} is an immediate 
consequence of the Krein-type resolvent formula we wish to emphasize that it is necessary to 
verify that the generalized index \eqref{indexdef} is well-defined for the function $D(\cdot)-\Theta$.
In fact, a somewhat subtle analysis is required in this context, and the key difficulty is to show 
that $(D(\cdot)-\Theta)^{-1}$ is a finitely meromorphic function (cf. Lemma~\ref{dt1}).

Besides the index formula for Robin realizations of $\mathcal L$ in Section~\ref{sec3}, we also discuss a slightly more abstract situation in Section~\ref{sec4}. Here
it is assumed that $B_1$ and $B_2$ are closed operators in a Hilbert space $\mathfrak H$ which are both extensions of a common underlying densely defined, symmetric
operator $S$. We shall use the abstract concept of boundary triples (see, e.g.,  \cite{BL12}, \cite{Br76}, \cite{BGP08}, \cite{DM91}, \cite{DM95}, \cite{GG91}, \cite{Ko75}) to parametrize $B_1$ and $B_2$ in the form
\begin{equation}\label{bbintro}
 B_1=S^*\upharpoonright\ker(\Gamma_1-\Theta_1\Gamma_0), \quad
 B_2=S^*\upharpoonright\ker(\Gamma_1-\Theta_2\Gamma_0),
\end{equation}
where $\Gamma_0$ and $\Gamma_1$ are linear maps from $\dom(S^*)$ into a boundary space $\mathcal G$ and $\Theta_1$ and $\Theta_2$ are closed operators in $\mathcal G$. 
Let $M(\cdot)$ denote the Weyl--Titchmarsh function corresponding to the boundary triple $\{\mathcal G,\Gamma_0,\Gamma_1\}$. Our goal in Section~\ref{sec4} is 
to prove the index formula
\begin{equation}\label{indiwowintro}
\wti{\ind}_{C(z_0; \varepsilon)} \big(\Theta_1 - M(\cdot)\big) -
\wti{\ind}_{C(z_0; \varepsilon)} \big(\Theta_2 - M(\cdot)\big)
= m_a\big(z_0;B_1\big) -m_a\big(z_0; B_2\big),
\end{equation}
in which the generalized index of the functions $\Theta_1 - M(\cdot)$ and $\Theta_2 - M(\cdot)$ is related to the algebraic multiplicities of a discrete eigenvalue $z_0$ of
$B_1$ and $B_2$ (the formula is also valid for points $z_0$ in the resolvent set of $B_1$ or $B_2$, in which case $m_a(z_0;B_1)=0$ or $m_a(z_0;B_2)=0$, respectively). 
In contrast to the index formula \eqref{indexintro} in Section~\ref{sec3}, here the values of the Weyl--Titchmarsh function $M(\cdot)$ are bounded operators, but the operator-valued 
parameters $\Theta_1$ and $\Theta_2$ are in general unbounded, closed operators. However, the strategy and the difficulties in the proof of the index formula in Theorem~\ref{indithm}
are similar to those in Section~\ref{sec3}: One first has to verify that the generalized index is 
well-defined for the functions $\Theta_1 - M(\cdot)$ and $\Theta_2 - M(\cdot)$ (again the key 
difficulty is to show that the inverses $(\Theta_1-M(\cdot))^{-1}$ and  $(\Theta_1-M(\cdot))^{-1}$ are finitely meromophic at a discrete eigenvalue of $B_1$ and $B_2$, respectively) and then a 
Krein-type resolvent formula (see, e.g., \cite{AB09}, \cite{AP04}, \cite{BL07},  \cite{BM14}, 
\cite{BR15a}, \cite{BGW09}--\cite{BNMW14},  \cite{DHMS09}--\cite{DMT88},  
\cite{GKMT01}--\cite{GM11}, \cite{GT00}, \cite{HMM13}, \cite{KO77}, \cite{KO78},  \cite{LT77}, \cite{Ma92a}, \cite{Ma92b}, \cite{Sa65}, and the references cited therein) yields the index formula \eqref{indiwowintro}. 

To ensure a self-contained presentation in Section~\ref{sec4}, we have added a short 
Appendix \ref{sA} on the abstract concept of boundary triples and their Weyl--Titchmarsh functions.
In this appendix we also establish the connection to abstract Donoghue-type $M$-functions studied in \cite{GKMT01}, \cite{GMT98}, \cite{GNWZ15}, \cite{GT00}, so that the index formula \eqref{indiwowintro} can also be interpreted in the framework of Donoghue-type $M$-functions.

Finally, we summarize the basic notation used in this paper: $\mathcal H$, $\mathfrak H$, and 
$\mathcal G$ denote separable complex 
Hilbert spaces with scalar products $(\,\cdot\,,\,\cdot\,)_{\cH}$, $(\,\cdot\,,\,\cdot\,)_{\mathfrak H}$, 
and $(\,\cdot\,,\,\cdot\,)_{\mathcal G}$, linear in the first entry, 
respectively. 
The Banach spaces of bounded, compact, and trace class (linear) operators in $\cH$ are denoted by 
$\cB(\cH)$, $\cB_\infty(\cH)$, and $\cB_1(\cH)$, respectively. The subspace of all finite rank operators will be abbreviated by $\cF(\cH)$. The analogous notation $\cB(\cH,\cG)$
will be used for bounded operators between the Hilbert spaces $\cH$ and $\cG$.
The set of densely defined, closed, linear operators in $\cH$ will be denoted by $\cC(\cH)$.
For a linear operator $T$ we denote by $\dom(T)$, $\ran(T)$ and $\ker(T)$ the domain, range, and kernel, respectively.
If $T$ is closable, the closure is denoted by $\overline T$. The spectrum, point spectrum, continuous spectrum, residual spectrum, and resolvent set of a closed  
operator $T\in\mathcal C(\mathcal H)$  will be denoted by $\sigma(T)$, $\sigma_p(T)$, $\sigma_c(T)$, $\sigma_r(T)$, and $\rho(T)$; the 
discrete spectrum of $T$ consists of eigenvalues of $T$ with finite algebraic multiplicity which are isolated in 
$\sigma(T)$, this set is  
abbreviated by $\sigma_d(T)$. 
For the algebraic multiplicity of an eigenvalue $z_0\in\sigma_d(T)$ we write $m_a(z_0;T)$ and
we set $m_a(z_0;T)=0$ if $z_0\in\rho(T)$.
Furthermore,
$\tr_{\cH}(T)$ denotes the trace of a trace class operator $T \in \cB_1(\cH)$. 
The symbol $\dotplus$ denotes a direct (but not necessary orthogonal direct) sum decomposition 
in connection with subspaces of Banach spaces.

\section{On the Notion of a Generalized Index of Meromorphic Operator-Valued Functions} 
\label{sec2}

Let $\cH$ be a separable complex Hilbert space,
assume that $\Omega \subseteq \mathbb C$ is an open set, and let $M(\cdot)$ be a $\cB(\cH)$-valued 
meromorphic function on $\Omega$ that has the norm convergent Laurent expansion around 
$z_0 \in \Omega$ of the form
\begin{equation}\label{mmm}
 M(z) = \sum_{k= - N_0}^\infty (z - z_0)^{k} M_k(z_0),\quad z\in D(z_0;\varepsilon_0)\backslash\{z_0\}, 
\end{equation}
where $M_k(z_0) \in \cB(\cH)$, $k\in\mathbb Z$, $k\geq -N_0$ and $\varepsilon_0>0$ is sufficiently small such that
the punctured open disc 
\begin{equation} 
D(z_0;\varepsilon_0)\backslash\{z_0\}=\{z\in\mathbb C \,|\, 0< \vert z-z_0\vert<\varepsilon_0\} 
\end{equation}
is contained in $\Omega$. The principal part ${\rm pp}_{z_0} \{M(z)\}$ of $M(\cdot)$ at $z_0$ is defined as the finite sum 
\begin{equation}\label{koko}
 {\rm pp}_{z_0} \{M(z)\}=\sum_{k= - N_0}^{-1} (z - z_0)^{k} M_k(z_0).
\end{equation}

\begin{definition} \label{d2.1}
Let $\Omega\subseteq\bbC$ be an open set and let $M(\cdot)$ be a $\cB(\cH)$-valued 
meromorphic function on $\Omega$. Then  
$M(\cdot)$ is called {\it finitely meromorphic at $z_0 \in\Omega$} if $M(\cdot)$ is 
analytic on the punctured disk $D(z_0;\varepsilon_0)\backslash\{z_0\} \subset \Omega$ 
with sufficiently small $\varepsilon_0 > 0$, and the principal part ${\rm pp}_{z_0} \{M(z)\}$ of $M(\cdot)$ at $z_0$ is of finite rank, that is, 
the principal part of $M(\cdot)$ is of the type \eqref{koko}, and one has 
\begin{equation}\label{lkj}
M_k(z_0) \in \cF(\cH), \quad -N_0 \leq k \leq -1. 
\end{equation}
The function $M(\cdot)$ is called {\it finitely meromorphic on $\Omega$} if it is meromorphic 
on $\Omega$ and finitely meromorphic at each of its poles.
\end{definition}

Assume that $M_j(\cdot)$, $j=1,2$, are $\cB(\cH)$-valued 
meromorphic functions on $\Omega$ that are both finitely meromorphic at $z_0 \in \Omega$,
choose $\varepsilon_0 > 0$ such that \eqref{mmm} and \eqref{lkj} hold for both functions $M_j(\cdot)$, and let $0<\varepsilon<\varepsilon_0$.
Then by \cite[Lemma\ XI.9.3]{GGK90} or \cite[Proposition\ 4.2.2]{GL09} also the functions 
$M_1(\cdot) M_2(\cdot)$ and $M_2(\cdot) M_1(\cdot)$ are finitely meromorphic at 
$z_0 \in \Omega$, the operators 
\begin{equation}\label{ws}
\ointctrclockwise_{C(z_0;\varepsilon)} d \zeta \, M_1(\zeta) M_2(\zeta)  \quad\text{and}\quad \ointctrclockwise_{C(z_0;\varepsilon)} d \zeta \, M_2(\zeta) M_1(\zeta)
\end{equation}
are both of finite rank and the identity
\begin{equation}\label{wss}
 {\tr}_{\cH} \bigg(\ointctrclockwise_{C(z_0;\varepsilon)} d \zeta \, 
M_1(\zeta) M_2(\zeta)\bigg) 
= {\tr}_{\cH} \bigg(\ointctrclockwise_{C(z_0;\varepsilon)} d \zeta \, 
M_2(\zeta) M_1(\zeta)\bigg)
\end{equation}
holds; here the symbol $ \ointctrclockwise$ denotes the contour integral and
$C(z_0; \varepsilon) = \partial D(z_0; \varepsilon)$ is the counterclockwise oriented circle 
with radius $\varepsilon$ centered at $z_0$.

In the next example a standard situation is discussed: the resolvent of a closed operator $T$ in the Hilbert space $\mathcal H$ is finitely meromorphic at a discrete eigenvalue (cf. \cite{GK69} or \cite{Ka80}).

\begin{example}\label{ex1}
Let $T$ be a closed operator in the Hilbert space $\mathcal H$ and let $z_0\in\sigma_d(T)$. Choose $\varepsilon_0>0$ sufficiently small such that the 
punctured disc $D(z_0;\varepsilon_0)\backslash\{z_0\}$ is contained in $\rho(T)$ and let $0<\varepsilon<\varepsilon_0$. Then the Riesz projection
\begin{equation} 
P(z_0;T)=-\frac{1}{2\pi i} \ointctrclockwise_{C(z_0;\varepsilon)} d\zeta \, (T -\zeta I_{\cH})^{-1},
\end{equation}
where as above $C(z_0; \varepsilon) = \partial D(z_0; \varepsilon)$, is a finite rank operator in $\mathcal H$
and the range of $P(z_0;T)$ coincides with the algebraic eigenspace of $T$ at $z_0$;  
in particular, one has 
\begin{equation} 
{\tr}_{\cH}(P(z_0;T)) = m_a(z_0; T).  
\end{equation} 
Furthermore, the Hilbert space $\mathcal H$ admits the direct sum decomposition 
\begin{equation}
 \mathcal H=\ran(P(z_0;T))\,\dot+\,\ran(I_{\mathcal H}-P(z_0;T))
\end{equation}
and the spaces $P(z_0;T)\mathcal H$ and $(I_{\mathcal H}-P(z_0;T))\mathcal H$ are both invariant for the closed operators $T$ and $T-z_0 I_{\mathcal H}$. Moreover, the restriction 
$T_1-z_0I_{\mathcal H}$ of $T-z_0I_{\mathcal H}$ onto the finite-dimensional
subspace $P(z_0;T)\mathcal H$ is nilpotent, that is, $(T_1-z_0 I_{\mathcal H})^{N_0}=0$ for some $N_0\in\mathbb N$ and we agree to choose the integer $N_0$ with this property minimal. The restriction 
$T_2-z_0I_{\mathcal H}$ of $T-z_0I_{\mathcal H}$ onto 
$(I_{\mathcal H}-P(z_0;T))\mathcal H$ is a boundedly invertible operator in the Hilbert space $(I_{\mathcal H}-P(z_0;T))\mathcal H$. As in  
\cite[Chapter 1, $\S$2. Proof of Theorem 2.1]{GK69} one verifies that the resolvent of $T$ in $D(z_0;\varepsilon_0)\backslash\{z_0\}$
admits a norm convergent Laurent expansion of the form
\begin{equation}\label{resex}
\begin{split}
 (T-z I_{\mathcal H})^{-1}&=-\sum_{k=-N_0}^{-1}(z-z_0)^k (T_1-z_0 I_{\mathcal H})^{-k-1} P(z_0;T)\\
                          &\quad+ \sum_{k=0}^\infty (z-z_0)^k (T_2-zI_{\mathcal H})^{-(k+1)}(I_{\mathcal H}-P(z_0;T)),
\end{split}
 \end{equation}
 and, in particular, the operators $(T_1-z_0 I_{\mathcal H})^{-k-1} P(z_0;T)$, $-N_0\leq k\leq -1$, are of finite rank. Therefore, the resolvent $z\mapsto (T-zI_{\mathcal H})$
 is finitely meromorphic at $z_0$. It also follows from the Laurent expansion \eqref{resex} that the derivatives $\tfrac{d^k}{dz^k}(T-z I_{\mathcal H})^{-1}$, $k \in\mathbb N$, are 
 finitely meromorphic at $z_0$.
\end{example}

The following example is a simple generalization and immediate consequence of Example~\ref{ex1}. The observation below will be used frequently in this paper.

\begin{example}\label{ex2}
 Let $T$ be a closed operator in the Hilbert space $\mathcal H$ and let $z_0\in\sigma_d(T)$. Assume that $\mathcal G$ is an auxiliary Hilbert space 
 and let $\gamma\in\mathcal B(\mathcal G,\mathcal H)$. Then the $\mathcal B(\mathcal G)$-valued function 
 \begin{equation}
  z\mapsto \gamma^*(T-zI_{\mathcal H})^{-1}\gamma,\quad z\in\rho(T),
 \end{equation}
is finitely meromorphic at $z_0$. Indeed, this simply follows by multiplying the Laurent expansion of the resolvent in \eqref{resex} by $\gamma^*\in\mathcal B(\mathcal H,\mathcal G)$
from the left and by $\gamma\in\mathcal B(\mathcal G,\mathcal H)$
from the right.
\end{example}

The aim of this preliminary section is to
introduce an extended notion of the index applicable to certain non-Fredholm and also unbounded 
meromorphic operator-valued functions $M(\cdot)$ in Definition~\ref{d3.6} below.
We start by collecting our assumptions on $M(\cdot)$. 

\begin{hypothesis} \lb{h3.5}
Let $\Omega \subseteq \bbC$ be open and connected, and $\cD_0 \subset \Omega$ a discrete 
set (i.e., a set without limit points in $\Omega$). Suppose that the map 
\begin{equation}
M:\Omega \backslash \cD_0 \to \cC(\cH),\quad z\mapsto M(z),
\end{equation}
takes on values in the set of densely defined, closed operators, $\cC(\cH)$, with the 
following additional properties: \\[1mm] 
 $(i)$ $\cM_0 := \dom (M(z))$ is independent of $z\in\Omega\backslash\cD_0$. \\[1mm] 
 $(ii)$ $M(z)$ is boundedly invertible, $M(z)^{-1} \in \cB(\cH)$ for all $z \in \Omega \backslash \cD_0$. \\[1mm]  
 $(iii)$ The function 
 \begin{equation}
 M(\cdot)^{-1}:\Omega\backslash\cD_0\rightarrow \cB(\cH),\quad z\mapsto M(z)^{-1}, 
 \end{equation}
 is analytic on $\Omega \backslash \cD_0$ and finitely meromorphic on $\Omega$. \\[1mm] 
 $(iv)$ For $\varphi\in \cM_0$ the function 
 \begin{equation} 
 M(\cdot)\varphi:\Omega\backslash\cD_0\rightarrow\cH,\quad z\mapsto M(z)\varphi, 
 \end{equation} 
 is analytic; in particular, the derivative $M'(z)\varphi$ exists for all $\varphi\in \cM_0$ and 
 $z\in  \Omega\backslash\cD_0$. \\[1mm] 
$(v)$ For $z\in\Omega\backslash\cD_0$, the operators $M'(z)$ defined on $\dom(M'(z))=\cM_0$, 
admit bounded continuations to operators $\overline{M'(z)}\in\cB(\cH)$, and the  
operator-valued function
\begin{equation} 
\overline{M'(\cdot)}:\Omega \backslash \cD_0 \to \cB(\cH),\quad z\mapsto \overline{M'(z)}, 
\end{equation} 
is analytic on $\Omega \backslash \cD_0$ and finitely meromorphic on $\Omega$. 
\end{hypothesis}

Granted Hypothesis \ref{h3.5} it follows that the maps 
\begin{equation}
z\mapsto \ol{M'(z)} M(z)^{-1}, \quad z\mapsto M(z)^{-1}\ol{M'(z)}
\end{equation}
are finitely meromorphic and hence identity \eqref{wss} applies. This leads to the following definition of a generalized index of $M(\cdot)$, which extends the
notion of an index for finitely meromorphic $\mathcal B(\mathcal H)$-valued functions employed in \cite{GS71} and, for instance, in \cite{GGK90, GL09} (cf. \cite[Definition 4.2]{BGHN16}).

\begin{definition} \lb{d3.6} 
Assume Hypothesis \ref{h3.5}, let $z_0 \in \Omega$, and $0 < \varepsilon$ sufficiently small. 
Then the {\it generalized index of $M(\cdot)$ with respect to the counterclockwise 
oriented circle $C(z_0; \varepsilon)$}, $\wti{\ind}_{C(z_0; \varepsilon)} (M(\cdot))$, is defined by 
 \begin{align}
\begin{split}
\wti{\ind}_{C(z_0; \varepsilon)}(M(\cdot)) &= {\tr}_{\cH}\bigg(\f{1}{2\pi i} 
\ointctrclockwise_{C(z_0; \varepsilon)} d\zeta \, 
\ol{M'(\zeta)} M(\zeta)^{-1}\bigg)     \lb{4.15} \\
& = {\tr}_{\cH}\bigg(\f{1}{2\pi i} 
\ointctrclockwise_{C(z_0; \varepsilon)} d\zeta \, 
M(\zeta)^{-1} \ol{M'(\zeta)}\bigg). 
\end{split}
\end{align}  
(Of course, $\wti{\ind}_{C(z_0; \varepsilon_0)} (M(\cdot)) = 0$, unless, $z_0 \in \cD_0$.)
\end{definition}

The main objective of this paper is to show that this notion of generalized index applies to
Dirichlet-to-Neumann maps associated to non-self-adjoint Schr\"odinger operators in 
Section \ref{sec3} and to abstract operator-valued Weyl--Titchmarsh functions or Donoghue-type 
$M$-functions in Section \ref{sec4}.
It will also turn out that the generalized index is integer-valued in both of these applications.

\section{Schr\"{o}dinger Operators with Complex Potentials and Dirichlet-to-Neumann Maps} \lb{sec3}

In this section we discuss applications to Schr\"odinger operators with bounded, complex-valued potentials on bounded Lipschitz domains. In particular, we consider Krein-type resolvent formulas 
and compute the generalized index associated to underlying (energy parameter dependent) 
Dirichlet-to-Neumann maps. 

\begin{hypothesis}\lb{h4.1}
Let $\Omega\subset\bbR^n$, $n\geq 2$, be a bounded Lipschitz domain and let $q\in L^\infty(\Omega)$ be a complex-valued potential. 
\end{hypothesis}

Assuming Hypothesis \ref{h4.1}, we  consider the Schr\"{o}dinger differential expression
\begin{equation}
 \cL=-\Delta + q, 
\end{equation}
and its formal adjoint
\begin{equation}
 \widetilde\cL=-\Delta + \ol q.
\end{equation}
For our purposes, it is convenient to work with operator realizations of $\cL$ and $\widetilde\cL$ in $L^2(\Omega)$ which are defined via boundary conditions on functions from the space
\begin{equation}
 H^{3/2}_\Delta(\Omega):=\bigl\{f\in H^{3/2}(\Omega)\,\big|\,\Delta f\in L^2(\Omega)\bigr\},
\end{equation}
where for each $f\in H^{3/2}(\Omega)$, $\Delta f$ is understood in the sense of distributions. The space $H^{3/2}_\Delta(\Omega)$ equipped with the scalar product
\begin{equation}
 (f,g)_{H^{3/2}_\Delta(\Omega)}=(f,g)_{H^{3/2}(\Omega)}+(\Delta f,\Delta g)_{L^2(\Omega)},\quad f,g\in H^{3/2}_\Delta(\Omega),
\end{equation}
is a Hilbert space. According to \cite[Lemmas~3.1 and 3.2]{GM11}, the
Dirichlet trace operator defined on $C^\infty(\overline\Omega)$ admits a continuous surjective extension 
\begin{equation}\label{gdex}
\gamma_D:H^{3/2}_\Delta(\Omega)\rightarrow H^1(\partial\Omega), 
\end{equation}
and the Neumann trace  operator defined on $C^\infty(\overline\Omega)$ admits a continuous surjective extension 
\begin{equation}\label{gnex}
\gamma_N:H^{3/2}_\Delta(\Omega)\rightarrow L^2(\partial\Omega).
\end{equation}
For our investigations it is important to note that Green's Second Identity extends to functions in $H^{3/2}_\Delta(\Omega)$, that is, 
\begin{equation}\label{green32}
\begin{split}
 (\cL f,g)_{L^2(\Omega)} -(f,\widetilde \cL g)_{L^2(\Omega)} =(\gamma_Df,\gamma_N g)_{L^2(\partial\Omega)} -(\gamma_Nf,\gamma_D g)_{L^2(\partial\Omega)},&\\
f,g\in H^{3/2}_\Delta(\Omega).&
\end{split}
\end{equation}

Next, we introduce the Dirichlet operators associated to the differential expressions $\cL$ and $\widetilde\cL$.

\begin{hypothesis}\lb{h4.2}
In addition to the assumptions in Hypothesis \ref{h4.1}, let $A_D$ and $\widetilde A_D$ denote the Dirichlet operators associated to the differential expressions $\cL$ and $\widetilde\cL$ in 
$L^2(\Omega)$, that is, 
\begin{equation}\label{ad}
 A_D f=\cL f,\quad f\in\dom (A_D)=\bigl\{g\in H^{3/2}_\Delta(\Omega)\, \big|\,\gamma_D g=0\bigr\},
\end{equation}
and
\begin{equation}\label{adt}
 \widetilde A_D f=\widetilde\cL f,\quad f\in\dom \big(\widetilde A_D\big) 
 = \bigl\{g\in H^{3/2}_\Delta(\Omega)\, \big|\,\gamma_D g=0\bigr\}.
\end{equation}
\end{hypothesis}
In the special case $q\equiv 0$, the operator $A_D$ coincides with the self-adjoint free Dirichlet Laplacian on $\Omega$, which we denote by $A_D^{(0)}$:
\begin{equation}
A_D^{(0)}f = -\Delta f,\quad f\in \dom(A_D^{(0)}) = \big\{g \in H^{3/2}_\Delta(\Omega)\,\big|\,
\gamma_D g=0\big\} 
\end{equation}
(cf., e.g.,  \cite[Theorem~2.10 and Lemma~3.4]{GM08} or \cite[Theorem B.2]{JK95}). 
Clearly, $A_D$ (resp., $\widetilde A_D$) may be viewed as an additive perturbation of $A_D^{(0)}$ by the bounded potential $q$ (resp., $\ol q$). 
These facts lead to the following result.

\begin{proposition}\label{propad}
Assume Hypothesis \ref{h4.2}.  The Dirichlet operators $A_D$ and $\widetilde A_D$ are densely defined, closed operators in $L^2(\Omega)$ which are adjoint to each other, 
\begin{equation}\label{adj}
 A_D^*=\widetilde A_D.
\end{equation}
In addition, $A_D$ and $\widetilde A_D$ have compact resolvents. 
\end{proposition}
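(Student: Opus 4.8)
The plan is to establish the three assertions of Proposition~\ref{propad}---density of domain, closedness, the adjoint relation $A_D^* = \widetilde A_D$, and compactness of the resolvent---by treating $A_D$ and $\widetilde A_D$ as bounded potential perturbations of the free Dirichlet Laplacian $A_D^{(0)}$. First I would recall that $A_D^{(0)}$ is self-adjoint (hence densely defined and closed) with purely discrete spectrum and compact resolvent; this is precisely the content of the cited results \cite[Theorem~2.10 and Lemma~3.4]{GM08} and \cite[Theorem~B.2]{JK95}, combined with the compactness of the embedding $H^{3/2}_\Delta(\Omega) \hookrightarrow L^2(\Omega)$ on the bounded Lipschitz domain $\Omega$ (via Rellich's theorem, since $H^{3/2}(\Omega) \hookrightarrow L^2(\Omega)$ is compact and the graph norm on $\dom(A_D^{(0)})$ dominates the $H^{3/2}_\Delta(\Omega)$-norm).

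Next, since $q \in L^\infty(\Omega)$, the multiplication operator $M_q \colon f \mapsto q f$ is a bounded, everywhere-defined operator on $L^2(\Omega)$, and likewise $M_{\ol q}$. Because $\dom(A_D) = \dom(A_D^{(0)}) = \{g \in H^{3/2}_\Delta(\Omega) \mid \gamma_D g = 0\}$ and $A_D = A_D^{(0)} + M_q$ on this common domain (and similarly $\widetilde A_D = A_D^{(0)} + M_{\ol q}$), a bounded perturbation of a closed, densely defined operator is again closed and densely defined; this gives the first assertion immediately. For the adjoint relation, I would use the standard fact that $(A_D^{(0)} + M_q)^* = (A_D^{(0)})^* + M_q^* = A_D^{(0)} + M_{\ol q} = \widetilde A_D$, valid because $M_q \in \cB(L^2(\Omega))$ with $M_q^* = M_{\ol q}$ and $A_D^{(0)}$ is self-adjoint; alternatively one can verify $A_D^* = \widetilde A_D$ directly from the extended Green identity \eqref{green32}: for $f \in \dom(A_D)$ and $g \in \dom(\widetilde A_D)$ both Dirichlet traces vanish, so $(A_D f, g)_{L^2(\Omega)} = (f, \widetilde A_D g)_{L^2(\Omega)}$, showing $\widetilde A_D \subseteq A_D^*$, and the reverse inclusion follows by a symmetry argument together with the fact that both operators are already known to be closed realizations of the respective expressions subject to the same (Dirichlet) boundary condition.

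For the compact resolvent, I would pick any $z_0 \in \rho(A_D^{(0)}) \cap \rho(A_D)$ (nonempty since both spectra are discrete) and write the second resolvent identity
\begin{equation}
(A_D - z_0 I)^{-1} = (A_D^{(0)} - z_0 I)^{-1} - (A_D^{(0)} - z_0 I)^{-1} M_q (A_D - z_0 I)^{-1}.
\end{equation}
Since $(A_D^{(0)} - z_0 I)^{-1} \in \cB_\infty(L^2(\Omega))$ and $M_q (A_D - z_0 I)^{-1} \in \cB(L^2(\Omega))$, the right-hand side is a compact operator, and the ideal property of $\cB_\infty(L^2(\Omega))$ finishes the argument; the same reasoning applies verbatim to $\widetilde A_D$. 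Equivalently, one may note that $(A_D - z_0 I)^{-1}$ maps $L^2(\Omega)$ boundedly into $\dom(A_D) \subset H^{3/2}_\Delta(\Omega)$ and invoke the compact embedding $H^{3/2}_\Delta(\Omega) \hookrightarrow L^2(\Omega)$ directly.

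None of the individual steps is genuinely hard; the only point requiring a little care---and the one I would expect to be the main obstacle---is justifying that $\rho(A_D) \neq \emptyset$ a priori, so that the resolvent identity is available and ``closedness plus compact resolvent'' is not vacuous. This I would handle by observing that $A_D = A_D^{(0)} + M_q$ with $M_q$ bounded, so for $z_0$ with $|z_0|$ large (or $\Re(z_0)$ sufficiently negative) one has $\|M_q (A_D^{(0)} - z_0 I)^{-1}\| < 1$, whence $A_D - z_0 I = (I + M_q (A_D^{(0)} - z_0 I)^{-1})(A_D^{(0)} - z_0 I)$ is boundedly invertible by a Neumann series; thus $\rho(A_D) \neq \emptyset$, and the discreteness of $\sigma(A_D)$ then follows from the compactness of $(A_D - z_0 I)^{-1}$ together with the spectral theory of operators with compact resolvent. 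The argument for $\widetilde A_D$ is identical, completing the proof of Proposition~\ref{propad}.
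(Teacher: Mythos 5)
Your proof is correct and follows exactly the route the paper indicates: the paper gives no explicit proof of Proposition~\ref{propad} but derives it from viewing $A_D$ and $\widetilde A_D$ as bounded additive perturbations of the self-adjoint free Dirichlet Laplacian $A_D^{(0)}$, which is precisely your argument (perturbation of a self-adjoint operator by the bounded multiplication operator $M_q$, the identity $(A_D^{(0)}+M_q)^*=A_D^{(0)}+M_{\ol q}$, and the second resolvent identity plus the ideal property of $\cB_\infty(L^2(\Omega))$ for compactness). Your extra care about $\rho(A_D)\neq\emptyset$ via a Neumann series is a worthwhile detail that the paper leaves implicit.
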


We note that \eqref{adj} also implies
\begin{equation}
z \in \rho(A_D)\,\text{ if and only if }\, {\ol z} \in \rho\big(\widetilde A_D\big).
\end{equation}

In light of the fact that the Dirichlet trace operator $\gamma_D$ 
maps $H^{3/2}_\Delta(\Omega)$ onto $H^1(\partial\Omega)$, it follows that for $z \in\rho(A_D)$ and $\varphi\in H^1(\partial\Omega)$ the boundary value problem
\begin{equation}\label{bvp}
 \cL f - z f=0,\quad\gamma_D f=\varphi,
\end{equation}
admits a unique solution $f_z \in H^{3/2}_\Delta(\Omega)$. Analogously, for 
$\wti z \in\rho\big(\widetilde A_D\big)$ and $\psi\in H^1(\partial\Omega)$, the boundary value problem
\begin{equation}\label{bvpt}
 \widetilde \cL g-\wti z g=0,\quad\gamma_D g=\psi,
\end{equation}
admits a unique solution $g_{\wti z}\in H^{3/2}_\Delta(\Omega)$. These observations imply that the solution operators and the Dirichlet-to-Neumann maps in the next definition
are well-defined.

\begin{definition}\label{pn}
Assume Hypothesis \ref{h4.2} and suppose $z \in\rho(A_D)$ and 
${\wti z} \in\rho\big(\widetilde A_D\big)$.  Let $f_z,g_{\wti z} \in H^{3/2}_\Delta(\Omega)$ 
denote the unique solutions of \eqref{bvp} and \eqref{bvpt} for $\varphi,\psi\in H^1(\partial\Omega)$, respectively. \\ 
$(i)$ The solution operators $P(z)$ and $\widetilde P({\wti z})$ associated to the boundary value problems \eqref{bvp} and \eqref{bvpt} are defined  by
\begin{equation}
 P(z)\varphi = f_z, \quad \widetilde P({\wti z})\psi = g_{\wti z},
\end{equation}
respectively. \\ 
$(ii)$ The (energy parameter dependent) Dirichlet-to-Neumann maps $D(z)$ and 
$\widetilde D({\wti z})$ associated to $\cL$ and $\widetilde\cL$ are defined  by 
\begin{equation}
 D(z)\varphi=\gamma_N f_z, \quad \widetilde D({\wti z})\psi=\gamma_N g_{\wti z},
\end{equation}
respectively.
\end{definition}

In the following, the solution operators $P(z)$ and $\widetilde P({\wti z})$ will often be regarded as densely defined operators from $L^2(\partial\Omega)$ into $L^2(\Omega)$,
and the Dirichlet-to-Neumann maps will be viewed as densely defined operators in $L^2(\partial\Omega)$. The next lemma collects relevant properties
of the solution operators and Dirichlet-to-Neumann maps, and its proof is based primarily on Green's Second Identity, \eqref{green32}. The arguments are almost the same  
as in the self-adjoint case, or in the abstract framework of boundary triples for dual pairs of operators (see \cite{MM02}),
and will not be repeated here. The reader is also referred to Steps 4--6 in the 
proof of Lemma~\ref{dt1} where similar methods are used. 

\begin{lemma}\label{lemlem}
Assume Hypothesis \ref{h4.2}.  For $z_1,z_2 \in\rho(A_D)$ and 
${\wti z_1},{\wti z_2} \in\rho\big(\widetilde A_D\big)$ the following identities hold: 
 \\[1mm]
$(i)$ The Poisson operator $P(z_1):L^2(\partial\Omega)\rightarrow L^2(\Omega)$ defined on the dense subspace $\dom (P(z_1))=H^1(\partial\Omega)$ is bounded and its adjoint 
  $P(z_1)^*\in\cB\big(L^2(\Omega),L^2(\partial\Omega)\big)$ is given by
  \begin{equation}
   P(z_1)^*=-\gamma_N \big(\widetilde A_D- \ol{z_1} I_{L^2(\Omega)}\big)^{-1}.
  \end{equation}
 $\widetilde{(i)}$ The Poisson operator $\widetilde P({\wti z_1}):L^2(\partial\Omega)\rightarrow L^2(\Omega)$ defined on the dense subspace 
  $\dom \big(\widetilde P({\wti z_1})\big)=H^1(\partial\Omega)$ is bounded and its adjoint 
  $\widetilde P({\wti z_1})^*\in\cB\big(L^2(\Omega),L^2(\partial\Omega)\big)$ is given by
  \begin{equation}
   \widetilde P({\wti z_1})^*=-\gamma_N(A_D- {\ol{\wti z_1}} I_{L^2(\Omega)})^{-1}.
  \end{equation}
  $(ii)$ For all $\varphi\in H^1(\partial\Omega)$ one has 
  \begin{equation}
   P(z_1)\varphi=\bigl(I_{L^2(\Omega)}+(z_1-z_2)(A_D- z_1 I_{L^2(\Omega)})^{-1}\bigr)P(z_2)\varphi.
  \end{equation}
 $\widetilde{(ii)}$ For all $\psi\in H^1(\partial\Omega)$ one has 
  \begin{equation}
   \widetilde P({\wti z_1})\psi=\bigl(I_{L^2(\Omega)}+({\wti z_1}-{\wti z_2}) 
   \big(\widetilde A_D- {\wti z_1} I_{L^2(\Omega)}\big)^{-1}\bigr)\widetilde P({\wti z_2})\psi.
  \end{equation}
 $(iii)$ The Dirichlet-to-Neumann map $D(z_1):L^2(\partial\Omega)\rightarrow L^2(\partial\Omega)$ defined on the dense subspace $\dom (D(z_1))=H^1(\partial\Omega)$ is a closed operator in 
 $L^2(\partial\Omega)$ and it satisfies the identity
  \begin{equation}\label{iiia}
   \bigl(D(z_1)-D(\ol{z_2})\bigr)\varphi=(\ol{z_2}-z_1)\widetilde P(z_2)^*P(z_1)\varphi, 
   \quad \varphi\in H^1(\partial\Omega). 
  \end{equation}
In particular, one has
 \begin{equation}
   D(z_1)\varphi=D(\ol{z_2})\varphi+(\ol{z_2}-z_1)\widetilde P(z_2)^*\bigl(I_{L^2(\Omega)} 
   +(z_1 - z_2)(A_D- z_1 I_{L^2(\Omega)})^{-1}\bigr)P(z_2)\varphi, 
\end{equation} 
and for all $\varphi\in H^1(\partial\Omega)$, the map $z_1 \mapsto D(z_1)\varphi$ is holomorphic 
on $\rho(A_D)$.\\
 $\widetilde{(iii)}$ The Dirichlet-to-Neumann map $\widetilde D({\wti z_1}):L^2(\partial\Omega)\rightarrow L^2(\partial\Omega)$ defined on the dense subspace 
  $\dom \big(\widetilde D({\wti z_1})\big)=H^1(\partial\Omega)$ is a closed operator in $L^2(\partial\Omega)$ and it satisfies the identity
  \begin{equation}\label{iiib}
   \bigl(\widetilde D({\wti z_1})-\widetilde D(\ol{\wti z_2})\bigr)\psi=(\ol{\wti z_2}-{\wti z_1}) 
   P({\wti z_2})^*\widetilde P({\wti z_1})\psi, \quad \psi\in H^1(\partial\Omega). 
  \end{equation}
In particular, one has
  \begin{equation}
   \widetilde D({\wti z_1})\varphi=\widetilde D(\ol{\wti z_2})\psi+(\ol{\wti z_2}-{\wti z_1})
   P({\wti z_2})^*\bigl(I_{L^2(\Omega)}+({\wti z_1} - {\wti z_2})\big(\widetilde A_D- {\wti z_1} I_{L^2(\Omega)}\big)^{-1}\bigr)\widetilde P({\wti z_2})\psi, 
  \end{equation}
  and for all $\psi\in H^1(\partial\Omega)$, the map 
  ${\wti z_1}\mapsto \widetilde D({\wti z_1})\psi$ is holomorphic on $\rho \big(\widetilde A_D\big)$. 
\end{lemma}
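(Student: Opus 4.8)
The plan is to deduce every assertion from Green's second identity \eqref{green32}, the unique solvability of the boundary value problems \eqref{bvp}, \eqref{bvpt}, the relation $A_D^*=\widetilde A_D$ from Proposition~\ref{propad}, and elementary resolvent algebra. The tilded items $\widetilde{(i)}$--$\widetilde{(iii)}$ are the mirror images of $(i)$--$(iii)$ under the exchange $\cL\leftrightarrow\widetilde\cL$, $A_D\leftrightarrow\widetilde A_D$, $P\leftrightarrow\widetilde P$, $D\leftrightarrow\widetilde D$, $z\leftrightarrow\ol z$ (using $A_D^*=\widetilde A_D$ to pass between the two resolvent sets), so it suffices to treat the untilded statements.

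For $(i)$, I would fix $\varphi\in H^1(\partial\Omega)$ and $u\in L^2(\Omega)$ and apply \eqref{green32} to $f:=P(z_1)\varphi$ and $g:=\big(\widetilde A_D-\ol{z_1}I_{L^2(\Omega)}\big)^{-1}u$; the latter lies in $\dom(\widetilde A_D)\subset H^{3/2}_\Delta(\Omega)$ with $\gamma_D g=0$ and $\widetilde\cL g=\ol{z_1}g+u$, while $\cL f=z_1 f$ and $\gamma_D f=\varphi$. The bulk terms in \eqref{green32} then telescope to $-(f,u)_{L^2(\Omega)}$ and the boundary terms to $(\varphi,\gamma_N g)_{L^2(\partial\Omega)}$, giving $\big(P(z_1)\varphi,u\big)_{L^2(\Omega)}=-\big(\varphi,\gamma_N(\widetilde A_D-\ol{z_1}I_{L^2(\Omega)})^{-1}u\big)_{L^2(\partial\Omega)}$. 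Since the resolvent maps $L^2(\Omega)$ boundedly into $H^{3/2}_\Delta(\Omega)$ (closed graph, using $\dom(\widetilde A_D)\subset H^{3/2}_\Delta(\Omega)$ and that the $H^{3/2}_\Delta$-norm dominates the $L^2$-norm) and $\gamma_N$ is bounded on $H^{3/2}_\Delta(\Omega)$ by \eqref{gnex}, the right-hand side is, for fixed $\varphi$, a bounded conjugate-linear functional of $u$ with norm $\lesssim\|\varphi\|_{L^2(\partial\Omega)}$; hence $P(z_1)$ is $L^2$-bounded on its dense domain, extends to an element of $\cB(L^2(\partial\Omega),L^2(\Omega))$, and the displayed identity identifies $P(z_1)^*=-\gamma_N(\widetilde A_D-\ol{z_1}I_{L^2(\Omega)})^{-1}$.

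Items $(ii)$ are purely algebraic: with $g:=\big(I_{L^2(\Omega)}+(z_1-z_2)(A_D-z_1I_{L^2(\Omega)})^{-1}\big)P(z_2)\varphi\in H^{3/2}_\Delta(\Omega)$ one has $\gamma_D g=\gamma_D P(z_2)\varphi=\varphi$, because $(A_D-z_1I_{L^2(\Omega)})^{-1}P(z_2)\varphi\in\dom(A_D)=\ker(\gamma_D)$, and $(\cL-z_1I_{L^2(\Omega)})g=0$ by a one-line computation using $\cL(A_D-z_1I_{L^2(\Omega)})^{-1}=I_{L^2(\Omega)}+z_1(A_D-z_1I_{L^2(\Omega)})^{-1}$ and $\cL P(z_2)\varphi=z_2 P(z_2)\varphi$; uniqueness in \eqref{bvp} forces $g=P(z_1)\varphi$. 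For $(iii)$, fix $\varphi\in H^1(\partial\Omega)$ and assume as needed that $z_2\in\rho(\widetilde A_D)$, equivalently $\ol{z_2}\in\rho(A_D)$ (Proposition~\ref{propad}), so that $P(\ol{z_2})$ and $\widetilde P(z_2)$ are defined. With $f_1:=P(z_1)\varphi$ and $h:=P(\ol{z_2})\varphi$ one has $f_1-h\in H^{3/2}_\Delta(\Omega)$ with $\gamma_D(f_1-h)=0$, hence $f_1-h\in\dom(A_D)$ and $(A_D-\ol{z_2}I_{L^2(\Omega)})(f_1-h)=\cL f_1-\ol{z_2}f_1-\cL h+\ol{z_2}h=(z_1-\ol{z_2})f_1$; applying $\gamma_N$ and inserting $\widetilde P(z_2)^*=-\gamma_N(A_D-\ol{z_2}I_{L^2(\Omega)})^{-1}$ from $\widetilde{(i)}$ yields \eqref{iiia}. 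Substituting $(ii)$ into \eqref{iiia} gives the displayed ``in particular'' identity, and since $z_1\mapsto(A_D-z_1I_{L^2(\Omega)})^{-1}$ is norm-analytic on $\rho(A_D)$ while the remaining factors are constant, $z_1\mapsto D(z_1)\varphi$ is analytic on $\rho(A_D)$; moreover \eqref{iiia} exhibits $D(z_1)-D(\ol{z_2})$ as the restriction to $H^1(\partial\Omega)$ of the bounded operator $(\ol{z_2}-z_1)\widetilde P(z_2)^*P(z_1)\in\cB(L^2(\partial\Omega))$, so the closedness of $D(z_1)$ on $H^1(\partial\Omega)$ reduces to that of a single $D(z_0)$.

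\emph{The main obstacle} is precisely this last point --- that $D(z_0)$ is closed for one, hence by the previous paragraph every, $z_0\in\rho(A_D)$; all preceding steps are Green's identity plus bookkeeping. I would choose $z_0\in\rho(A_D)\cap\rho\big(A_D^{(0)}\big)$ (both operators have compact resolvents, so this intersection is nonempty), and, running the computation of $(iii)$ with $-\Delta$ in place of $\cL$, obtain $D(z_0)-D^{(0)}(z_0)=-\gamma_N\big(A_D^{(0)}-z_0I_{L^2(\Omega)}\big)^{-1}q\,P(z_0)$, where $D^{(0)}(\cdot)$ is the Dirichlet-to-Neumann map of $-\Delta$; the right-hand side is bounded on $L^2(\partial\Omega)$ by the argument used for $(i)$. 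It then remains to invoke the known fact that the \emph{free} Dirichlet-to-Neumann map $D^{(0)}(z_0)$ is a closed operator in $L^2(\partial\Omega)$ with domain $H^1(\partial\Omega)$ (see \cite{GM08}, \cite{GM11}; for real $z_0$ below $\sigma\big(A_D^{(0)}\big)$ this amounts to its self-adjointness and nonnegativity), which forces $D(z_0)$, and hence every $D(z_1)$, to be closed. As the text indicates, all these arguments parallel the self-adjoint case and the dual-pair boundary-triple computations of \cite{MM02}, and recur in Steps~4--6 of the proof of Lemma~\ref{dt1}.
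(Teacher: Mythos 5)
Your proposal is correct, and it follows exactly the template the paper itself points to: the paper omits the proof of this lemma, referring the reader to the self-adjoint case, to \cite{MM02}, and to Steps 4--6 of the proof of Lemma~\ref{dt1}, and your arguments for $(i)$, $(ii)$, and the identity \eqref{iiia} are precisely the abstract Green's-identity and resolvent-algebra computations carried out there (your reading of the hypothesis on $z_2$, namely that \eqref{iiia} implicitly requires $\ol{z_2}\in\rho(A_D)$, i.e.\ $z_2\in\rho\big(\widetilde A_D\big)$, is also the right one). The only point where you supply something not contained in that template is the closedness of $D(z_1)$, which you correctly reduce to a single point and then settle by perturbing off the free Dirichlet-to-Neumann map $D^{(0)}(z_0)$ and citing \cite{GM08}, \cite{GM11}; an alternative that stays entirely inside the paper's own machinery is to take $z_0\in\rho(A_D)\cap\rho(A_N)$ and observe, as in Step~2 of the proof of Lemma~\ref{dt1}, that the Neumann-to-Dirichlet map $N(z_0)=D(z_0)^{-1}$ is a bounded injective operator on $L^2(\partial\Omega)$, whence $D(z_0)$ is closed as the inverse of a bounded injective operator. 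Both routes are legitimate; yours trades a literature citation for independence from the Neumann realization $A_N$.
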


As a useful consequence of Lemma~\ref{lemlem}, one obtains the following result.

\begin{corollary}\label{cori}
For all $\varphi,\psi\in H^1(\partial\Omega)$ one has 
\begin{equation}\label{aqaq}
 \frac{d}{dz} D(z)\varphi =-\widetilde P(\ol{z})^*P(z)\varphi, \quad 
 \frac{d}{d{\wti z}} \widetilde D({\wti z})\psi =-P(\ol{\wti z})^*\widetilde P({\wti z})\psi,
\end{equation}
and the densely defined bounded operators $D^\prime(z)=\frac{d}{dz} D(z)$ and 
$\widetilde D^\prime({\wti z})=\frac{d}{d{\wti z}} \widetilde D({\wti z})$
in $L^2(\partial\Omega)$  
admit continuous extensions 
\begin{equation}\label{yxyx}
 \overline {D^\prime(z)}=-\widetilde P(\ol{z})^*\overline{P(z)}\in\cB(L^2(\partial\Omega))
\end{equation}
and
\begin{equation}\label{xyxy}
 \overline{\widetilde D^\prime({\wti z})}=-P(\ol{\wti z})^*\overline{\widetilde P({\wti z})} 
 \in\cB(L^2(\partial\Omega)).
\end{equation}
The $\cB(L^2(\partial\Omega))$-valued functions $z \mapsto\overline{D^\prime(z)}$ and 
${\wti z}\mapsto \overline{\widetilde D^\prime({\wti z})}$ are analytic
on $\rho(A_D)$ and $\rho\big(\widetilde A_D\big)$, respectively, and finitely
meromorphic on $\bbC$.
\end{corollary}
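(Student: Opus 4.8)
The plan is to obtain \eqref{aqaq} by differentiating the Krein-type identities \eqref{iiia} and \eqref{iiib}, and then to read off the bounded extensions \eqref{yxyx}, \eqref{xyxy} together with the analyticity and finite meromorphy directly from the structure of the resulting formulas. First I would fix $z\in\rho(A_D)$ and take $h\in\bbC$ small enough that $z+h\in\rho(A_D)$; applying \eqref{iiia} with $z_1=z+h$ and $z_2=\ol z$ (note that $\ol z\in\rho(\widetilde A_D)$ because $z\in\rho(A_D)$, so all objects are defined) and dividing by $h$ gives, for every $\varphi\in H^1(\partial\Omega)$,
\[
\frac{1}{h}\bigl(D(z+h)\varphi-D(z)\varphi\bigr)=-\widetilde P(\ol z)^*\,P(z+h)\varphi .
\]
By Lemma~\ref{lemlem}\,(ii) one has $P(z+h)\varphi=\bigl(I_{L^2(\Omega)}+h(A_D-(z+h)I_{L^2(\Omega)})^{-1}\bigr)P(z)\varphi$, which converges to $P(z)\varphi$ in $L^2(\Omega)$ as $h\to0$ by norm-continuity of the resolvent, and $\widetilde P(\ol z)^*\in\cB\bigl(L^2(\Omega),L^2(\partial\Omega)\bigr)$ by Lemma~\ref{lemlem}\,$\widetilde{(i)}$; letting $h\to0$ yields the first identity in \eqref{aqaq}, and the second follows in exactly the same way from \eqref{iiib}, Lemma~\ref{lemlem}\,$\widetilde{(ii)}$, and Lemma~\ref{lemlem}\,(i).

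Next, since $P(z)$ with $\dom(P(z))=H^1(\partial\Omega)$ is bounded from $L^2(\partial\Omega)$ into $L^2(\Omega)$ by Lemma~\ref{lemlem}\,(i), it has a bounded closure $\ol{P(z)}\in\cB\bigl(L^2(\partial\Omega),L^2(\Omega)\bigr)$, and hence $-\widetilde P(\ol z)^*\,\ol{P(z)}\in\cB\bigl(L^2(\partial\Omega)\bigr)$ is a bounded operator that restricts to $D'(z)=-\widetilde P(\ol z)^*P(z)$ on the dense subspace $H^1(\partial\Omega)$; this gives \eqref{yxyx}, and \eqref{xyxy} is obtained analogously. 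Closing the identities in Lemma~\ref{lemlem}\,(ii) and $\widetilde{(ii)}$ (which is legitimate since the Poisson operators are bounded) also yields, for any fixed $z_0\in\rho(A_D)$ and $\widetilde z_0\in\rho(\widetilde A_D)$,
\[
\ol{P(z)}=\bigl(I_{L^2(\Omega)}+(z-z_0)(A_D-zI_{L^2(\Omega)})^{-1}\bigr)\ol{P(z_0)},\quad \ol{\widetilde P(\widetilde z)}=\bigl(I_{L^2(\Omega)}+(\widetilde z-\widetilde z_0)(\widetilde A_D-\widetilde z I_{L^2(\Omega)})^{-1}\bigr)\ol{\widetilde P(\widetilde z_0)}.
\]

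For the regularity claims, the first of these representations shows that $z\mapsto\ol{P(z)}$ is analytic on $\rho(A_D)$ and finitely meromorphic on $\bbC$: its only singularities lie in $\sigma_d(A_D)$ (using that $A_D$ has compact resolvent by Proposition~\ref{propad}), and there the resolvent $(A_D-zI_{L^2(\Omega)})^{-1}$ is finitely meromorphic by Example~\ref{ex1}. The second representation gives the analogous statement for $\widetilde z\mapsto\ol{\widetilde P(\widetilde z)}$ on $\rho(\widetilde A_D)$. Since $\widetilde A_D=A_D^*$, one has $\widetilde z\in\rho(\widetilde A_D)$ if and only if $\ol{\widetilde z}\in\rho(A_D)$, and adjunction transforms a norm-convergent power series $\ol{\widetilde P(\widetilde z)}=\sum_{k\ge0}a_k(\widetilde z-\widetilde z_0)^k$ near $\widetilde z_0$ into the norm-convergent power series $\widetilde P(\ol z)^*=\bigl(\ol{\widetilde P(\ol z)}\bigr)^*=\sum_{k\ge0}a_k^*(z-\ol{\widetilde z_0})^k$ near $\ol{\widetilde z_0}$ (and likewise for Laurent series, with the finite rank of the principal-part coefficients preserved); hence $z\mapsto\widetilde P(\ol z)^*$ is analytic on $\rho(A_D)$ and finitely meromorphic on $\bbC$. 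Combining this with \eqref{yxyx} and the product rule for finitely meromorphic operator-valued functions (\cite[Lemma~XI.9.3]{GGK90}, \cite[Proposition~4.2.2]{GL09}) shows that $z\mapsto\ol{D'(z)}=-\widetilde P(\ol z)^*\,\ol{P(z)}$ is analytic on $\rho(A_D)$ and finitely meromorphic on $\bbC$; the statement for $\ol{\widetilde D'(\cdot)}$ follows by interchanging the roles of $A_D$ and $\widetilde A_D$.

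The only genuinely delicate point I anticipate is the regularity part of the argument just sketched. The reason for routing it through the Poisson operators and the relation $\widetilde P(\ol z)^*=\bigl(\ol{\widetilde P(\ol z)}\bigr)^*$, rather than through the formula $\widetilde P(\ol z)^*=-\gamma_N(A_D-zI_{L^2(\Omega)})^{-1}$ of Lemma~\ref{lemlem}\,$\widetilde{(i)}$, is that the latter would force one to control $z\mapsto(A_D-zI_{L^2(\Omega)})^{-1}$ as a $\cB\bigl(L^2(\Omega),H^{3/2}_\Delta(\Omega)\bigr)$-valued function before post-composing with the bounded Neumann trace $\gamma_N$ of \eqref{gnex} — which would require the elliptic a priori bound $\|g\|_{H^{3/2}_\Delta(\Omega)}\le C\bigl(\|\Delta g\|_{L^2(\Omega)}+\|g\|_{L^2(\Omega)}\bigr)$ for $g\in\dom(A_D)$ (as in \cite{GM08}, \cite{GM11}) together with a check that the Laurent expansion \eqref{resex} converges in this stronger topology (automatic for the finite-rank principal part, whose coefficients have range inside $\dom(A_D)\subset H^{3/2}_\Delta(\Omega)$, and a short estimate for the holomorphic part). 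Working with $\bigl(\ol{\widetilde P(\ol z)}\bigr)^*$ instead reduces the whole matter to the elementary behaviour of operator-valued power and Laurent series under conjugation of the argument and adjunction of the coefficients.
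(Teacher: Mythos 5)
Your proposal is correct and follows essentially the same route as the paper: difference quotients of \eqref{iiia}, \eqref{iiib} for \eqref{aqaq}, boundedness of $\widetilde P(\ol z)^*$ and $\ol{P(z)}$ for the extensions \eqref{yxyx}, \eqref{xyxy}, and the resolvent representations of the Poisson operators from Lemma~\ref{lemlem}\,(ii), $\widetilde{(ii)}$ combined with Examples~\ref{ex1} and \ref{ex2} for analyticity and finite meromorphy. The only (cosmetic) difference is that the paper handles the factor $\widetilde P(\ol z)^*$ by taking the adjoint of the operator identity in Lemma~\ref{lemlem}\,$\widetilde{(ii)}$ directly, obtaining $\widetilde P(\ol z)^*=\widetilde P(\ol{z_0})^*\bigl(I_{L^2(\Omega)}+(z-z_0)(A_D-zI_{L^2(\Omega)})^{-1}\bigr)$ via $\widetilde A_D=A_D^*$, so that the whole product $\ol{D'(z)}$ is manifestly of the form covered by Example~\ref{ex2}, whereas you take adjoints at the level of the power/Laurent series and then invoke the product rule for finitely meromorphic functions — both are valid and amount to the same computation.
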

\begin{proof}
By \eqref{iiia} and \eqref{iiib}, the derivatives $\frac{d}{dz} D(z)\varphi$ and 
$\frac{d}{d{\wti z}} \widetilde D({\wti z})\psi$ exist for
all $\varphi,\psi\in H^1(\partial\Omega)$ and have the form as in \eqref{aqaq}. It is also clear from 
Lemma~\ref{lemlem} that the operators 
\begin{equation}
 \widetilde P(\ol{z})^*P(z), \quad P(\ol{\wti z})^*\widetilde P({\wti z})
\end{equation}
are defined on the dense subspace $H^1(\partial\Omega)$, and both are bounded. Hence, the continuous extensions onto $L^2(\Omega)$ are given by \eqref{yxyx} and \eqref{xyxy}, 
respectively. From \eqref{yxyx} and Lemma~\ref{lemlem} we conclude for some $z_0\in\rho(A_D)$ and all $z \in\rho(A_D)$ that
\begin{align}
 \overline{D^\prime(z)}&=-\bigl(\bigl(I_{L^2(\Omega)}+(\ol{z} - \ol{z_0}) 
 \big(\widetilde A_D-\ol{z} I_{L^2(\Omega)}\big)^{-1}\bigr)\widetilde P(\ol{z_0})\bigr)^*\no\\
 &\qquad\times\bigl(I_{L^2(\Omega)}+(z - z_0)(A_D- z I_{L^2(\Omega)})^{-1}\bigr)\overline{P(z_0)}\no\\
 &\quad =- \widetilde P(\ol{z_0})^*\bigl(I_{L^2(\Omega)}+(z - z_0)(A_D 
 - z I_{L^2(\Omega)})^{-1}\bigr)\no\\
 &\qquad \times \bigl(I_{L^2(\Omega)}+(z - z_0)(A_D- z I_{L^2(\Omega)})^{-1}\bigr)\overline{P(z_0)},
\end{align}
which shows that $z \mapsto\overline{D^\prime(z)}$ is analytic on $\rho(A_D)$ and finitely meromorphic on $\bbC$ (cf. Examples~\ref{ex1} and \ref{ex2}).
\end{proof}

\begin{hypothesis}\lb{h4.7}
In addition to the assumptions in Hypothesis \ref{h4.1}, suppose $\Theta\in\cB(L^2(\partial\Omega))$, and let $A_{\Theta}$ and $\widetilde A_{\Theta^*}$ denote the Robin realizations of $\cL$ and 
$\widetilde \cL$ in $L^2(\Omega)$, 
\begin{equation}\label{at}
 A_\Theta f=-\Delta f+qf,\quad f\in \dom (A_\Theta)=\bigl\{g\in H^{3/2}_\Delta(\Omega)\,\big|\,\Theta\gamma_D g=\gamma_N g\bigr\},
\end{equation}
and
\begin{equation}\label{atstern}
 \widetilde A_{\Theta^*} f=-\Delta f+ {\ol  q} f,\quad f\in \dom \big(\widetilde A_{\Theta^*}\big) 
 = \bigl\{g\in H^{3/2}_\Delta(\Omega)\,\big|\,\Theta^*\gamma_D g=\gamma_N g\bigr\}.
\end{equation}
\end{hypothesis}
In connection with $A_{\Theta}$ and $\widetilde A_{\Theta^*}$, one obtains the following variant of Proposition~\ref{propad}:

\begin{proposition}
Assume Hypothesis \ref{h4.7}.  Then $A_\Theta$ and $\widetilde A_{\Theta^*}$ are closed operators in $L^2(\Omega)$ which are adjoint to each other, 
\begin{equation}\label{adjttt}
 A_\Theta^*=\widetilde A_{\Theta^*}.
\end{equation}
In addition, $A_\Theta$ and $\widetilde A_{\Theta^*}$ have compact resolvents. 
\end{proposition}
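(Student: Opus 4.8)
The plan is to mimic the proof of Proposition~\ref{propad} for the Robin realizations, using the same perturbation-theoretic reduction to the case $q\equiv 0$ together with the known self-adjointness (and resolvent compactness) of the self-adjoint Robin Laplacian on a bounded Lipschitz domain. First I would establish the adjointness relation \eqref{adjttt}: one inclusion follows directly from Green's Second Identity \eqref{green32}. Indeed, if $f\in\dom(A_\Theta)$ and $g\in\dom(\widetilde A_{\Theta^*})$, then $\Theta\gamma_D f=\gamma_N f$ and $\Theta^*\gamma_D g=\gamma_N g$, so
\begin{equation}
(\cL f,g)_{L^2(\Omega)} - (f,\widetilde\cL g)_{L^2(\Omega)}
= (\gamma_D f,\gamma_N g)_{L^2(\partial\Omega)} - (\gamma_N f,\gamma_D g)_{L^2(\partial\Omega)}
= (\gamma_D f,\Theta^*\gamma_D g)_{L^2(\partial\Omega)} - (\Theta\gamma_D f,\gamma_D g)_{L^2(\partial\Omega)} = 0,
\end{equation}
which shows $\widetilde A_{\Theta^*}\subseteq A_\Theta^*$. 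The reverse inclusion $A_\Theta^*\subseteq\widetilde A_{\Theta^*}$ is the subtler direction: given $g\in\dom(A_\Theta^*)$, one must show $g\in H^{3/2}_\Delta(\Omega)$ and that $g$ satisfies the adjoint Robin condition. This is where the surjectivity of the trace maps \eqref{gdex}, \eqref{gnex} and a density/regularity argument enter, and I expect this to be the main obstacle; one standard route is to first treat the unperturbed case (where $A_{\Theta}^{(0)}$ is the self-adjoint Robin Laplacian, for which $\big(A_{\Theta}^{(0)}\big)^*=A_{\Theta^*}^{(0)}$ is known from the literature, e.g. \cite{GM11}), and then transfer to general $q$ via the bounded perturbation identity below.

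Next I would reduce to the case $q\equiv 0$. Writing $M_q$ for the operator of multiplication by $q$, which is bounded on $L^2(\Omega)$, one has formally $A_\Theta = A_\Theta^{(0)} + M_q$ and $\widetilde A_{\Theta^*} = A_{\Theta^*}^{(0)} + M_{\ol q}$ on the respective domains; crucially the Robin boundary conditions \eqref{at}, \eqref{atstern} involve only $\gamma_D$ and $\gamma_N$ and not $q$, so $\dom(A_\Theta)=\dom(A_\Theta^{(0)})$ and $\dom(\widetilde A_{\Theta^*})=\dom(A_{\Theta^*}^{(0)})$. Since $A_\Theta^{(0)}$ is self-adjoint, in particular closed, and $M_q\in\cB(L^2(\Omega))$, the operator $A_\Theta=A_\Theta^{(0)}+M_q$ is closed; likewise $\widetilde A_{\Theta^*}=A_{\Theta^*}^{(0)}+M_{\ol q}$ is closed. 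For the adjoint, the Rellich--Kato-type identity $\big(A_\Theta^{(0)}+M_q\big)^* = \big(A_\Theta^{(0)}\big)^* + M_q^* = A_{\Theta^*}^{(0)} + M_{\ol q}$ (valid because $M_q$ is everywhere-defined and bounded) gives \eqref{adjttt} at once, modulo knowing $\big(A_\Theta^{(0)}\big)^*=A_{\Theta^*}^{(0)}$ for the self-adjoint Robin Laplacian with the possibly non-self-adjoint bounded parameter $\Theta$. That last fact for bounded $\Theta$ on Lipschitz domains is essentially the content of the boundary-triple / dual-pair formalism already invoked before Lemma~\ref{lemlem} (see \cite{MM02}, \cite{GM11}), so I would cite it rather than reprove it.

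Finally, I would establish the compact resolvent claim. It suffices to show $\rho(A_\Theta)\neq\varnothing$ together with compactness of one resolvent. For $q\equiv 0$, $A_\Theta^{(0)}$ is self-adjoint and bounded from below with form domain contained in $H^1(\Omega)$, which embeds compactly into $L^2(\Omega)$ on the bounded Lipschitz domain $\Omega$ by Rellich's theorem; hence $\big(A_\Theta^{(0)} - \zeta I_{L^2(\Omega)}\big)^{-1}\in\cB_\infty(L^2(\Omega))$ for $\zeta\in\rho(A_\Theta^{(0)})$. For general $q$, pick $\zeta\in\rho(A_\Theta^{(0)})$; since $M_q$ is bounded, a Neumann-series / second-resolvent argument (standard for relatively bounded perturbations with relative bound zero) shows that $\rho(A_\Theta)$ is nonempty and that $\big(A_\Theta - \zeta I_{L^2(\Omega)}\big)^{-1}$ differs from $\big(A_\Theta^{(0)} - \zeta I_{L^2(\Omega)}\big)^{-1}$ by a factor that leaves the compact class $\cB_\infty(L^2(\Omega))$ invariant, so $A_\Theta$ has compact resolvent; the same argument applies verbatim to $\widetilde A_{\Theta^*}$. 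Throughout, the only genuinely non-routine input is the self-adjoint Robin theory on Lipschitz domains (closedness, the adjoint identity, semiboundedness, form domain in $H^1$), which I would import from \cite{GM11} and the references therein; everything else is bounded-perturbation bookkeeping.
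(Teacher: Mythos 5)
The paper states this proposition without proof (it is introduced only as a ``variant of Proposition~\ref{propad}'', which is likewise unproved), so the intended argument is presumably the one sketched before Proposition~\ref{propad}: reduce to $q\equiv 0$ by a bounded additive perturbation and import the unperturbed theory from \cite{GM08}, \cite{BGMM16}. Your overall skeleton is exactly this, and the perturbation bookkeeping is sound: Green's identity gives $\widetilde A_{\Theta^*}\subseteq A_\Theta^*$, the domains of $A_\Theta$ and $A_\Theta^{(0)}$ coincide, $(T+B)^*=T^*+B^*$ for densely defined closed $T$ and bounded $B$, and the second resolvent identity transfers resolvent compactness.

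There is, however, a genuine error in how you treat the unperturbed operator. You repeatedly assert that $A_\Theta^{(0)}$ is \emph{self-adjoint} (``Since $A_\Theta^{(0)}$ is self-adjoint, in particular closed''; ``$A_\Theta^{(0)}$ is self-adjoint and bounded from below with form domain contained in $H^1(\Omega)$''). Under Hypothesis~\ref{h4.7} the parameter $\Theta\in\cB(L^2(\partial\Omega))$ is \emph{not} assumed self-adjoint, so $A_\Theta^{(0)}$ is in general non-self-adjoint --- your own identity $\bigl(A_\Theta^{(0)}\bigr)^*=A_{\Theta^*}^{(0)}$ with $\Theta^*\neq\Theta$ already contradicts the claim. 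Consequently the compact-resolvent step as written fails: semiboundedness and ``compactly embedded form domain $\Rightarrow$ compact resolvent'' are statements about self-adjoint (or at least m-sectorial) operators. The repair is to note that the form $\|\nabla f\|_{L^2(\Omega)}^2-(\Theta\gamma_Df,\gamma_Df)_{L^2(\partial\Omega)}$ on $H^1(\Omega)$ is sectorial (the boundary term has form bound zero relative to the Dirichlet form), giving an m-sectorial operator with compact resolvent; but one must then still identify that form-defined operator with the operator \eqref{at} defined on $H^{3/2}_\Delta(\Omega)$ via the trace maps --- a nontrivial Lipschitz-domain regularity statement that is precisely the content of \cite{GM08}/\cite{BGMM16} and does not follow from the abstract dual-pair formalism. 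The same remark applies to the $q\equiv0$ adjoint identity $\bigl(A_\Theta^{(0)}\bigr)^*=A_{\Theta^*}^{(0)}$ for non-self-adjoint bounded $\Theta$: this is the actual hard content of the proposition at $q=0$, and citing the self-adjoint theory of \cite{GM11} does not cover it. In short, your reduction is the right skeleton, but the two inputs you treat as known are respectively false as stated and unproved; both must be replaced by the non-self-adjoint Robin theory on Lipschitz domains.
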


In the next preparatory lemma, we study the operators $D(z)-\Theta$ and 
$\widetilde D({\wti z})-\Theta^*$ and their inverses in $L^2(\partial\Omega)$. As will turn out, these operators play an important role in the Krein-type resolvent formulas and index formulas at the end of this section.

\begin{lemma}\label{dt1}
Assume Hypothesis \ref{h4.7}.  Let $z \in\rho(A_D)\cap\rho(A_\Theta)$, 
${\wti z}\in\rho\big(\widetilde A_D\big)\cap\rho\big(\widetilde A_{\Theta^*}\big)$, and let $D(z)$ and 
$\widetilde D({\wti z})$
be the Dirichlet-to-Neumann maps associated to $\cL$ and $\widetilde\cL$, respectively. Then the following assertions hold: \\ 
$(i)$ $D(z)-\Theta$ is boundedly invertible and the inverse is a compact operator in
$L^2(\partial\Omega)$, 
\begin{equation}
(D(z)-\Theta)^{-1}\in\cB_\infty(L^2(\partial\Omega)).
\end{equation}
Furthermore, the map $z \mapsto (D(z)-\Theta)^{-1}$ is analytic on $\rho(A_\Theta)$ and finitely meromorphic on $\bbC$. \\[1mm]  
$\widetilde{(i)}$ $\widetilde D({\wti z})-\Theta^*$ is boundedly invertible and the inverse is a compact operator in
$L^2(\partial\Omega)$, 
\begin{equation}
\bigl(\widetilde D({\wti z})-\Theta^*\bigr)^{-1}\in\cB_\infty(L^2(\partial\Omega)).
\end{equation}
Furthermore, the map ${\wti z}\mapsto \big(\widetilde D({\wti z})-\Theta^*\big)^{-1}$ is analytic 
on $\rho\big(\widetilde A_\Theta\big)$ and finitely meromorphic on $\bbC$. 
\end{lemma}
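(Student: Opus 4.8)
The plan is to prove part $(i)$ in detail; part $\widetilde{(i)}$ then follows by the obvious substitutions $\cL \leftrightarrow \widetilde\cL$, $\Theta \leftrightarrow \Theta^*$, $A_\Theta \leftrightarrow \widetilde A_{\Theta^*}$, so I would only remark on that at the end. For the bounded invertibility and compactness of $(D(z)-\Theta)^{-1}$, the key is a Krein-type resolvent identity connecting $A_\Theta$, $A_D$, and $D(\cdot)-\Theta$ on the boundary space $L^2(\partial\Omega)$. Concretely, for $z\in\rho(A_D)\cap\rho(A_\Theta)$ I would establish that $D(z)-\Theta$ maps $H^1(\partial\Omega)=\dom(D(z))$ bijectively onto $L^2(\partial\Omega)$ and that
\begin{equation}
(A_\Theta - z I_{L^2(\Omega)})^{-1} = (A_D - z I_{L^2(\Omega)})^{-1} + \overline{P(z)}\,(D(z)-\Theta)^{-1}\,\widetilde P(\ol z)^*,
\end{equation}
using Green's Second Identity \eqref{green32}: given $h\in L^2(\Omega)$, one writes the candidate solution $f = (A_D - zI)^{-1}h + P(z)\varphi$ of $(\cL - z)f = h$, computes $\gamma_D f = \varphi$ and $\gamma_N f = D(z)\varphi - \gamma_N(A_D-zI)^{-1}h = D(z)\varphi + \widetilde P(\ol z)^* h$ (the last equality by Lemma~\ref{lemlem}$(i)$), and imposes the Robin condition $\Theta\gamma_D f = \gamma_N f$, which is exactly $(D(z)-\Theta)\varphi = -\widetilde P(\ol z)^* h$. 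Since $z\in\rho(A_\Theta)$ forces a unique such $f$ for every $h$, and $\widetilde P(\ol z)^*$ is surjective onto $L^2(\partial\Omega)$ (being, up to a boundedly invertible factor, $-\gamma_N(A_D-zI)^{-1}$ composed with surjective traces, or more directly because $P(z)$ is injective with dense range among the relevant data), this yields that $D(z)-\Theta : H^1(\partial\Omega)\to L^2(\partial\Omega)$ is bijective, hence boundedly invertible by the closed graph theorem (Lemma~\ref{lemlem}$(iii)$ gives closedness of $D(z)$, hence of $D(z)-\Theta$). Compactness of $(D(z)-\Theta)^{-1}$ then follows because it maps $L^2(\partial\Omega)$ into $H^1(\partial\Omega)$, which embeds compactly into $L^2(\partial\Omega)$ since $\partial\Omega$ is a compact Lipschitz boundary.

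For analyticity on $\rho(A_\Theta)$: fix $z_0\in\rho(A_\Theta)$ and write, using Lemma~\ref{lemlem}$(iii)$ and the resolvent expansion of $A_D$ around $z_0$, $D(z)-\Theta = (D(z_0)-\Theta) + R(z)$ where $R(z) = \widetilde P(\ol{z_0})^*\big[(z-z_0)(A_D - zI)^{-1} + \cdots\big]P(z_0)$ is an operator-norm-analytic $\cB(L^2(\partial\Omega))$-valued perturbation (here I use that $(D(z)-\Theta)\varphi - (D(z_0)-\Theta)\varphi = (D(z)-D(z_0))\varphi$ extends to a bounded, norm-analytic family by the explicit formula in Lemma~\ref{lemlem}$(iii)$, analogously to Corollary~\ref{cori}). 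Then $D(z)-\Theta = (D(z_0)-\Theta)\big(I + (D(z_0)-\Theta)^{-1}R(z)\big)$ on $H^1(\partial\Omega)$, and for $z$ near $z_0$ the Neumann series inverts the bracket; since $(D(z_0)-\Theta)^{-1}R(z)$ is norm-analytic in $\cB(L^2(\partial\Omega))$, so is $(D(z)-\Theta)^{-1} = \big(I + (D(z_0)-\Theta)^{-1}R(z)\big)^{-1}(D(z_0)-\Theta)^{-1}$, which gives analyticity on all of $\rho(A_\Theta)$ since $z_0$ was arbitrary there.

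The main obstacle — and this is exactly the point the introduction flags — is finite meromorphy of $(D(\cdot)-\Theta)^{-1}$ on all of $\bbC$, i.e. at points of $\sigma_d(A_\Theta)$ (and at isolated points where $z\in\sigma(A_D)$). Here I cannot use the global resolvent formula directly because the factors blow up. Instead, at a pole $z_0$ I would localize: restrict attention to a small punctured disc $D(z_0;\varepsilon_0)\setminus\{z_0\}\subset\rho(A_\Theta)\cap\rho(A_D)$, on which analyticity is already known, and show the principal part at $z_0$ has finite rank. The mechanism is that near $z_0$ one has, from the Krein formula read backwards, $(D(z)-\Theta)^{-1} = -\widetilde P(\ol z)^{*,-1}\big[(A_\Theta-zI)^{-1} - (A_D-zI)^{-1}\big]\overline{P(z)}^{-1}$ in an appropriate sense — more carefully, I would use the identity $(D(z)-\Theta)^{-1} = \widetilde P(\ol z)^* \overline{P(z)} - \widetilde P(\ol z)^*(A_\Theta-zI)^{-1}\overline{P(z)}$ plus bounded analytic pieces, obtained by inverting the Krein formula and using Lemma~\ref{lemlem}$(i),(ii)$ and $(iii)$ to re-express everything through $P(z_0)$, $\widetilde P(\ol{z_0})$ and the two resolvents. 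Since $(A_\Theta - \cdot\, I)^{-1}$ is finitely meromorphic at $z_0\in\sigma_d(A_\Theta)$ (Example~\ref{ex1}) with finite-rank principal part, and $\overline{P(z)}$, $\widetilde P(\ol z)^*$ are norm-analytic, sandwiching by these bounded analytic families preserves finite meromorphy (as in Example~\ref{ex2} and \cite[Lemma~XI.9.3]{GGK90}); at points $z_0\in\sigma(A_D)$ one argues symmetrically with $(A_D-\cdot\,I)^{-1}$. The delicate bookkeeping — justifying that $\widetilde P(\ol z)^*$ and $\overline{P(z)}$ really can be moved outside the singular resolvent difference without introducing spurious singularities, and that the resulting principal part is genuinely finite rank — is precisely ``the somewhat subtle analysis'' alluded to before Lemma~\ref{dt1}, and the reference there to Steps~4--6 in its proof signals that Green's identity \eqref{green32} plus the mapping properties in Lemma~\ref{lemlem} are the only tools needed, carried out carefully. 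Finally, $\widetilde{(i)}$ is obtained verbatim upon replacing $(\cL,\Theta,A_\Theta,A_D,P,\widetilde P)$ by $(\widetilde\cL,\Theta^*,\widetilde A_{\Theta^*},\widetilde A_D,\widetilde P,P)$ and using the $\widetilde{(\cdot)}$-labelled parts of Lemma~\ref{lemlem}.
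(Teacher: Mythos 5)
Your overall strategy---first bijectivity and compactness, then a resolvent-type identity to transfer finite meromorphy from $(A_\Theta-zI_{L^2(\Omega)})^{-1}$ to $(D(\cdot)-\Theta)^{-1}$---matches the paper's, but the central mechanism is missing. The paper does not ``invert the Krein formula'': the factors $P(z)$ and $\widetilde P(\ol z)^*$ appearing in \eqref{kreinformel} are not invertible, so neither your identity $(D(z)-\Theta)^{-1}=-\widetilde P(\ol z)^{*,-1}\bigl[\cdots\bigr]\overline{P(z)}^{-1}$ nor its softened variant can be obtained this way, and you concede as much by deferring the ``delicate bookkeeping.'' What the paper actually does is introduce new objects: the \emph{Robin} solution operators $P_\Theta(z)\colon\varphi\mapsto f_z$ solving $\cL f=zf$, $\gamma_Nf-\Theta\gamma_Df=\varphi$, and $\widetilde P_{\Theta^*}(\wti z)$. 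It proves their boundedness by computing $P_\Theta(z)^*=\gamma_D\big(\widetilde A_{\Theta^*}-\ol zI_{L^2(\Omega)}\big)^{-1}$ via Green's identity, establishes $P_\Theta(z)=\bigl(I_{L^2(\Omega)}+(z-z_0)(A_\Theta-zI_{L^2(\Omega)})^{-1}\bigr)P_\Theta(z_0)$, and then derives, again from Green's identity, the key formula
\begin{equation*}
(D(z)-\Theta)^{-1}=(D(\ol{\wti z})-\Theta)^{-1}+(z-\ol{\wti z})\bigl(\widetilde P_{\Theta^*}(\wti z)\bigr)^*\bigl(I_{L^2(\Omega)}+(z-\ol{\wti z})(A_\Theta-zI_{L^2(\Omega)})^{-1}\bigr)P_\Theta(\ol{\wti z}).
\end{equation*}
This exhibits $(D(\cdot)-\Theta)^{-1}$ as a fixed bounded operator plus $(A_\Theta-\cdot\,I_{L^2(\Omega)})^{-1}$ sandwiched between two \emph{fixed} bounded operators, whence analyticity on all of $\rho(A_\Theta)$ and finite meromorphy on $\bbC$ follow at once from Examples~\ref{ex1}--\ref{ex2}. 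Note that the singularities are governed solely by $A_\Theta$: your suggestion to ``argue symmetrically with $(A_D-\cdot\,I)^{-1}$'' at $z_0\in\sigma(A_D)$ is off the mark, and your Neumann-series argument for analyticity breaks down precisely at $z_0\in\rho(A_\Theta)\cap\sigma(A_D)$, where $D(z_0)$ is not even defined; the displayed formula is what provides the analytic continuation across $\sigma(A_D)$.

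Two further gaps in the first part. Your surjectivity argument rests on surjectivity of $\widetilde P(\ol z)^*=-\gamma_N(A_D-zI_{L^2(\Omega)})^{-1}$ onto $L^2(\partial\Omega)$, which you do not prove (dense range is easy, surjectivity is not); the paper instead uses the Neumann-to-Dirichlet map, showing $N(z)=D(z)^{-1}\in\cB_\infty(L^2(\partial\Omega))$ and $I_{L^2(\partial\Omega)}-\Theta N(z)$ injective, hence boundedly invertible by the Fredholm alternative, so that $(D(z)-\Theta)^{-1}=N(z)\bigl(I_{L^2(\partial\Omega)}-\Theta N(z)\bigr)^{-1}$; compactness then comes from the factorization through $H^1(\partial\Omega)$ exactly as you say. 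That step requires $z\in\rho(A_N)$ in addition, a restriction removed only afterwards by the analytic continuation above---another reason the final identity is indispensable rather than optional bookkeeping.
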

\begin{proof}
The proof of Lemma~\ref{dt1}\,($i$) is divided into seven separate steps. The proof of item 
$\widetilde{(i)}$ follows precisely the same strategy and is hence omitted. \\[1mm] 
{\it Step 1.}
It will be shown first that the operator $D(z)-\Theta$ is injective for any $z \in\rho(A_D)\cap\rho(A_\Theta)$. Assume that for some $\varphi\in H^1(\partial\Omega)$,
\begin{equation}
 \bigl(D(z)-\Theta\bigr)\varphi=0
\end{equation}
and let $f_z \in H^{3/2}_\Delta(\Omega)$ be the unique solution of the boundary value problem 
\begin{equation}
\begin{cases}
\cL f- z f=0,\\
\gamma_D f=\varphi.
\end{cases}
\end{equation}
Then one infers
\begin{equation}
 \Theta\gamma_D f_z = \Theta\varphi=D(z)\varphi=D(z)\gamma_D f_z 
 =\gamma_N f_z,
\end{equation}
and hence $f_z \in\dom (A_\Theta)$ with $A_\Theta f_z = z f_z$. As 
$z\in\rho(A_\Theta)$, one concludes $f_z=0$, and hence $\varphi=\gamma_D f_z=0$. \\[1mm] 
{\it Step 2.}
In order to see that $D(z)-\Theta$ maps onto $L^2(\partial\Omega)$, one recalls that the inverse of the Dirichlet-to-Neumann map $N(z)=D(z)^{-1}$, 
the Neumann-to-Dirichlet map, is well-defined for all $z\in\rho(A_D)\cap\rho(A_N)$, where $A_N$ denotes the Neumann realization of $\cL=-\Delta+q$,
\begin{equation}
 A_Nf=-\Delta f+qf,\quad f\in \dom(A_N)=\big\{g\in H^{3/2}_\Delta(\Omega)\,\big|\,\gamma_N g=0 \big\}.
\end{equation}
Moreover, it follows in the same way as in \cite[Proposition 4.6]{BL07} or \cite[Lemma 4.6]{BGMM16} that
\begin{equation}\label{ncomp}
 N(z)\in\cB_\infty(L^2(\partial\Omega)).
\end{equation}
For $z \in\rho(A_\Theta)\cap\rho(A_D)\cap\rho(A_N)$, the operator $I_{L^2(\partial\Omega)}-\Theta N(z)$ is injective. In fact, suppose that
$\varphi=\Theta N(z)\varphi$ for some $\varphi\in L^2(\partial\Omega)$
and choose $f_z \in H^{3/2}_\Delta(\Omega)$ such that $\cL f_z = z f_z$ and 
$\gamma_N f_z = \varphi$. Then 
\begin{equation}
 \gamma_N f_z = \varphi=\Theta N(z)\varphi=\Theta N(z)\gamma_N f_z 
 = \Theta\gamma_D f_z,
\end{equation}
and hence $f_z \in\dom (A_\Theta)$. As $z \in\rho(A_\Theta)$, one concludes that $f_z=0$, and therefore, $\varphi=\gamma_N f_z =0$. 

The fact \eqref{ncomp} and the assumption $\Theta\in\cB(L^2(\partial\Omega))$ imply $\Theta N(z)\in\cB_\infty(L^2(\partial\Omega))$
and since $I_{L^2(\partial \Omega)}-\Theta N(z)$ is injective, one concludes 
\begin{align}
\bigl(D(z)-\Theta\bigr)^{-1}&=N(z)\bigl((D(z)-\Theta)N(z)\bigr)^{-1}\no\\
&=N(z)\bigl(I_{L^2(\partial \Omega)}-\Theta N(z)\bigr)^{-1}\in\cB(L^2(\partial\Omega))
\end{align}
for all $z \in\rho(A_\Theta)\cap\rho(A_D)\cap\rho(A_N)$. Therefore, $(D(z)-\Theta)^{-1}$ is closed as an operator in $L^2(\partial\Omega)$ and since
$\ran((D(z)-\Theta)^{-1})=H^1(\partial\Omega)$, the operator $(D(z)-\Theta)^{-1}$ is also closed as an operator from $L^2(\partial\Omega)$ to
$H^1(\partial\Omega)$. This implies 
\begin{equation}
(D(z)-\Theta)^{-1}\in\cB\bigl(L^2(\partial\Omega),H^1(\partial\Omega)\bigr),
\end{equation}
and as $H^1(\partial\Omega)$ is compactly embedded in $L^2(\partial\Omega)$, one concludes 
\begin{equation}\label{compi}
(D(z)-\Theta)^{-1}\in\cB_\infty(L^2(\partial\Omega)), \quad 
z \in\rho(A_\Theta)\cap\rho(A_D)\cap\rho(A_N).  
\end{equation}
\noindent
{\it Step 3.} Let $z \in\rho(A_\Theta)\cap\rho(A_D)\cap\rho(A_N)$ and 
${\wti z}\in\rho\big(\widetilde A_{\Theta^*}\big)\cap\rho\big(\widetilde A_D\big) 
\cap\rho\big(\widetilde A_N\big)$.
One observes first that for $\varphi\in L^2(\partial\Omega)$ and $\psi\in L^2(\partial\Omega)$ the boundary value problems
\begin{equation}\label{1b}
\begin{cases}
 \cL f- z f=0,\\
  \gamma_N f-\Theta\gamma_D f=\varphi,
 \end{cases}
\end{equation}
and
\begin{equation}\label{2b}
\begin{cases}
 \widetilde\cL g- {\wti z} g=0,\\
  \gamma_N g-\Theta^*\gamma_D g=\psi,
 \end{cases}
\end{equation}
admit unique solutions in $H^{3/2}_\Delta(\Omega)$. In fact, since the operators
$(D(z)-\Theta)^{-1}$ and $\big(\widetilde D({\wti z})-\Theta^*\big)^{-1}$ are defined on 
$L^2(\partial\Omega)$, and map into $H^1(\partial\Omega)$, the boundary value problems
\begin{equation}
\begin{cases}
 \cL f- z f=0,\\
  \gamma_D f=(D(z)-\Theta)^{-1}\varphi,
 \end{cases}
\end{equation}
and
\begin{equation}
\begin{cases}
 \widetilde\cL g- {\wti z} g=0,\\
  \gamma_D g = \big(\widetilde D({\wti z})-\Theta^*\big)^{-1}\psi,
 \end{cases}
\end{equation}
admit unique solutions $f_z \in H^{3/2}_\Delta(\Omega)$ and 
$g_{\wti z}\in H^{3/2}_\Delta(\Omega)$. Since
\begin{equation}
 \gamma_N f_z - \Theta\gamma_D f_z = (D(z)-\Theta)\gamma_D f_z = \varphi, 
\end{equation}
and
\begin{equation}
 \gamma_N g_{\wti z}-\Theta^*\gamma_D g_{\wti z} 
 = \big(\widetilde D({\wti z})-\Theta^*\big)\gamma_D g_{\wti z} =\psi,
\end{equation}
it is clear that $f_z$ and $g_{\wti z}$ solve \eqref{1b} and \eqref{2b}, respectively. We shall denote the solution operators
corresponding to the boundary value problems \eqref{1b} and \eqref{2b} by $P_\Theta(z)$ and 
$\widetilde P_{\Theta^*}({\wti z})$, respectively, that is,
\begin{equation}\label{p1}
 P_\Theta(z):L^2(\partial\Omega)\rightarrow L^2(\Omega),\quad\varphi \mapsto f_z,
\end{equation}
and
\begin{equation}\label{p2}
 \widetilde P_{\Theta^*}({\wti z}):L^2(\partial\Omega)\rightarrow L^2(\Omega),\quad\psi 
 \mapsto g_{\wti z},
\end{equation}
where $f_z \in H^{3/2}_\Delta(\Omega)$ and $g_{\wti z} \in H^{3/2}_\Delta(\Omega)$ denote the unique solutions of \eqref{1b} and \eqref{2b}, respectively. \\[1mm] 
{\it Step 4.} We claim that for $z \in\rho(A_\Theta)\cap\rho(A_D)\cap\rho(A_N)$ and 
${\wti z} \in\rho\big(\widetilde A_{\Theta^*}\big)\cap\rho\big(\widetilde A_D\big)\cap\rho\big(\widetilde A_N\big)$ 
the operators $P_\Theta(z)$ and $\widetilde P_{\Theta^*}({\wti z})$ in \eqref{p1} and \eqref{p2}, respectively, are bounded, that is, 
\begin{equation}\label{qqq}
 P_\Theta(z)\in\cB\bigl(L^2(\partial\Omega),L^2(\Omega)\bigr), \quad 
 \widetilde P_{\Theta^*}({\wti z})\in\cB\bigl(L^2(\partial\Omega),L^2(\Omega)\bigr).
\end{equation}
In fact, in order to verify the assertion for $P_\Theta(z)$ let $\varphi\in L^2(\partial\Omega)$ and 
$k\in L^2(\Omega)$. Since $z \in\rho(A_\Theta)$ implies $\ol{z} \in\rho\big(\widetilde A_{\Theta^*}\big)$,  there exists $h\in\dom\big(\widetilde A_{\Theta^*}\big)$ such that 
\begin{equation}
k = \big(\widetilde A_{\Theta^*}- \ol{z} I_{L^2(\Omega)}\big)h.
\end{equation}
Thus one computes with the help of Green's Second Identity \eqref{green32}, the boundary condition $\gamma_N h=\Theta^*\gamma_D h$, and the definition of $P_\Theta(z)$, that
\begin{align}
 (P_{\Theta}(z)\varphi,k)_{L^2(\Omega)}
 &=\bigl(f_z, \big(\widetilde A_{\Theta^*}- \ol{z} I_{L^2(\Omega)}\big)h\bigr)_{L^2(\Omega)}\no\\
 &= \big(f_z,\widetilde\cL h\big)_{L^2(\Omega)}-( f_z,\ol{z} h)_{L^2(\Omega)}\no\\
 &= \big(f_z,\widetilde\cL h\big)_{L^2(\Omega)}-(\cL f_z,h)_{L^2(\Omega)}\no\\
 &= (\gamma_N f_z,\gamma_D h)_{L^2(\partial\Omega)}-(\gamma_D f_z,\gamma_N h)_{L^2(\partial\Omega)}\no\\
 &=(\gamma_N f_z,\gamma_D h)_{L^2(\partial\Omega)}-(\gamma_D f_z,\Theta^*\gamma_D h)_{L^2(\partial\Omega)}\no\\
 &=\bigl([\gamma_N f_z-\Theta\gamma_D f_z],\gamma_D h\bigr)_{L^2(\partial\Omega)}\no\\
 &=\bigl(\varphi,\gamma_D \big(\widetilde A_{\Theta^*}-\ol{z} I_{L^2(\Omega)}\big)^{-1}k\bigr)_{L^2(\partial\Omega)}.
 \end{align}
The above computation implies that $P_\Theta(z)^*$ is defined on all of $L^2(\Omega)$  and given by 
\begin{equation}
 P_\Theta(z)^* 
 = \gamma_D \big(\widetilde A_{\Theta^*}-\ol{z} I_{L^2(\Omega)}\big)^{-1},
\end{equation}
and since $P_\Theta(z)^*$ is automatically closed it follows that
\begin{equation}
 P_\Theta(z)^*\in\cB\bigl(L^2(\Omega),L^2(\partial\Omega)\bigr).
\end{equation}
Hence $P_\Theta(z)^{**}\in\cB(L^2(\partial\Omega),L^2(\Omega))$ and since $\dom (P_\Theta(z))=L^2(\partial\Omega)$ it follows that
$P_\Theta(z)$ and $P_\Theta(z)^{**}$ coincide. Consequently, 
$P_\Theta(z)\in\cB\bigl(L^2(\partial\Omega),L^2(\Omega)\bigr)$. 
The proof of the second assertion in \eqref{qqq} is completely analogous. \\[1mm] 
{\it Step 5.} It will be shown that the solution operators in \eqref{p1} and \eqref{p2} satisfy the identities
\begin{equation}\label{pp1}
 P_\Theta(z)=\bigl(I_{L^2(\Omega)}+(z - z_0)(A_\Theta- z I_{L^2(\Omega)})^{-1}\bigr)P_\Theta(z_0)
\end{equation}
for all $z, z_0\in\rho(A_\Theta)\cap\rho(A_D)\cap\rho(A_N)$, and
\begin{equation}\label{pp2}
 \widetilde P_{\Theta^*}({\wti z})=\bigl(I_{L^2(\Omega)}+({\wti z} - {\wti z_0}) \big(\widetilde A_{\Theta^*}-{\wti z} I_{L^2(\Omega)}\big)^{-1}\bigr)\widetilde P_{\Theta^*}({\wti z_0})
\end{equation}
for all ${\wti z}, {\wti z_0} \in\rho\big(\widetilde A_{\Theta^*}\big)\cap\rho\big(\widetilde A_D\big)
\cap\rho\big(\widetilde A_N\big)$,
respectively. We verify \eqref{pp1} and omit details of the analogous proof of \eqref{pp2}.
 Let $\varphi\in L^2(\partial\Omega)$ and let $f_{z_0}\in H^{3/2}_\Delta(\Omega)$ be the unique solution of the boundary value problem
 \begin{equation}\label{bvpp}
 \begin{cases}
 \cL f-  z_0 f=0,\\
  \gamma_N f - \Theta\gamma_D f=\varphi,
 \end{cases}
\end{equation}
so that $P_\Theta(z_0)\varphi=f_{z_0}$. Since $z \in\rho(A_\Theta)$, one can make use of the direct sum decomposition
\begin{equation}
 H^{3/2}_\Delta(\Omega)=\dom (A_\Theta)\,\dot+\,\bigl\{f\in H^{3/2}_\Delta(\Omega)\,\big|\,
 \cL f- z f=0\bigr\}
\end{equation}
and write $f_{z_0}$ in the form
\begin{equation}\label{fl}
 f_{z_0}=f_\Theta+f_z,
\end{equation}
where $f_\Theta\in\dom(A_\Theta)$ and $f_z \in H^{3/2}_\Delta(\Omega)$ satisfies 
$\cL f_z - z f_z = 0$. Since 
$\gamma_N f_\Theta-\Theta \gamma_D f_\Theta=0$, it follows
from \eqref{fl} that 
\begin{equation}
\gamma_N f_z - \Theta\gamma_D f_z = \gamma_N f_{z_0}-\Theta\gamma_D f_{z_0}=\varphi,
\end{equation}
and hence $f_z$ in \eqref{fl} is the unique solution of the boundary value problem
\begin{equation}\label{bvppp}
 \begin{cases}
 \cL f- z f=0, \\ \gamma_N f-\Theta\gamma_D f=\varphi,
 \end{cases}
\end{equation}
so that $P_\Theta(z)\varphi=f_z$.
As $f_z - f_{z_0}=-f_\Theta\in\dom(A_\Theta)$, one can choose $g\in L^2(\Omega)$ such that
\begin{equation}
 f_z - f_{z_0} = (A_\Theta- z I_{L^2(\Omega)})^{-1}g,
\end{equation}
and then one computes
\begin{align}
 (z - z_0)f_{z_0}&= z \bigl(f_z - (A_\Theta- z I_{L^2(\Omega)})^{-1}g \bigr)- z_0 f_{z_0} 
 \no\\
 &=\cL (f_z - f_{z_0})- z (A_\Theta- z I_{L^2(\Omega)})^{-1}g\no\\
 &=\cL (A_\Theta- z I_{L^2(\Omega)})^{-1}g- z (A_\Theta- z I_{L^2(\Omega)})^{-1}g\no\\
 &=g,
 \end{align}
which yields
\begin{align}
  P_\Theta(z)\varphi&=f_z  \no\\
 &= f_{z_0} + (A_\Theta- z I_{L^2(\Omega)})^{-1}g\no\\
 &=f_{z_0}+(z - z_0)(A_\Theta- z I_{L^2(\Omega)})^{-1}f_{z_0}\no\\
 &=\bigl(I_{L^2(\Omega)}+(z - z_0)(A_\Theta- z I_{L^2(\Omega)})^{-1}\bigr) P_\Theta(z_0)\varphi.
\end{align}
This establishes \eqref{pp1}; the proof of \eqref{pp2} is analogous. \\[1mm]
{\it Step 6.} Let $z \in\rho(A_\Theta)\cap\rho(A_D)\cap\rho(A_N)$ and 
${\wti z}\in\rho\big(\widetilde A_{\Theta^*}\big)\cap\rho\big(\widetilde A_D\big)\cap
\rho\big(\widetilde A_N\big)$. 
In this step we verify the identity
\begin{equation}\label{plpl}
 \bigl(D(z)-\Theta\bigr)^{-1}=\bigl(D(\ol{\wti z})-\Theta\bigr)^{-1}+(z - \ol{\wti z})\bigl(\widetilde P_{\Theta^*}({\wti z})\bigr)^* P_\Theta(z).
\end{equation}
Let $\varphi,\psi\in L^2(\partial\Omega)$ and let $f_z = P_\Theta(z)\varphi$ and 
$g_{\wti z} = \widetilde P_{\Theta^*}({\wti z})\psi$. Then $f_z$ satisfies
\begin{equation}
\begin{cases}
\cL f_z - z f_z = 0,\\
 \gamma_N f_z - \Theta\gamma_D f_z = \varphi,
\end{cases}
\end{equation} 
$g_{{\wti z}}$ satisfies
\begin{equation}
\begin{cases}
\widetilde\cL g_{\wti z} - {\wti z} g_{\wti z}=0,\\
 \gamma_N g_{\wti z} - \Theta^*\gamma_D g_{\wti z}=\psi,
\end{cases}
\end{equation}
and
\begin{equation}
 \gamma_D f_z = (D(z)-\Theta)^{-1}\varphi,\quad
 \gamma_D g_{\wti z}=\bigl(\widetilde D({\wti z})-\Theta^*\bigr)^{-1}\psi.
\end{equation}
Hence, one infers
\begin{align}
 &\bigl((D(z)-\Theta)^{-1}\varphi,\psi\bigr)_{L^2(\partial\Omega)} 
 - \bigl(\varphi,\big(\widetilde D({\wti z})-\Theta^*\big)^{-1}\psi\bigr)_{L^2(\partial\Omega)} \no\\
 &\quad =\bigl(\gamma_D f_z, [\gamma_N g_{\wti z} -\Theta^*\gamma_D g_{\wti z}]\bigr)_{L^2(\partial\Omega)} - \bigl([\gamma_N f_z - \Theta\gamma_D f_z],\gamma_D g_{\wti z}\bigr)_{L^2(\partial\Omega)} \no\\
 &\quad =(\gamma_D f_z,\gamma_N g_{\wti z})_{L^2(\partial\Omega)} -
          (\gamma_N f_z,\gamma_D g_{\wti z})_{L^2(\partial\Omega)} \no\\
 &\quad = \big(\cL f_z,g_{\wti z}\big)_{L^2(\Omega)} 
 - \big(f_z,\widetilde \cL g_{\wti z}\big)_{L^2(\Omega)}\no\\
 &\quad =(z f_z,g_{\wti z})_{L^2(\Omega)}-(f_z, {\wti z} g_{\wti z})_{L^2(\Omega)}\no\\
 &\quad =(z - \ol{\wti z})\bigl(P_\Theta(z)\varphi,\widetilde P_{\Theta^*}({}\wti z)\psi\bigr)_{L^2(\Omega)}\no\\
 &\quad =(z - \ol{\wti z})\bigl(\bigl(\widetilde P_{\Theta^*}({\wti z})\bigr)^* P_\Theta(z)\varphi,\psi\bigr)_{L^2(\partial\Omega)}.\label{ccc}
 \end{align}
In particular, for $z=\ol{\wti z}$, 
\begin{equation}
 \bigl((D(\ol{\wti z})-\Theta)^{-1}\varphi,\psi\bigr)_{L^2(\partial\Omega)} 
 = \bigl(\varphi,\big(\widetilde D({\wti z})-\Theta^*\big)^{-1}\psi\bigr)_{L^2(\partial\Omega)},
\end{equation}
and hence
\begin{equation}
 (D(\ol{\wti z})-\Theta)^{-1}=\bigl(\big(\widetilde D({\wti z})-\Theta^*\big)^{-1}\bigr)^*. \lb{ccA}
\end{equation}
Together with \eqref{ccc}, \eqref{ccA} implies that
\begin{equation}
 (D(z)-\Theta)^{-1}-(D(\ol{\wti z})-\Theta)^{-1}=(z - \ol{\wti z})\bigl(\widetilde P_{\Theta^*}({\wti z})\bigr)^* P_\Theta(z),
\end{equation}
yielding \eqref{plpl}. \\[1mm]
{\it Step 7.} For $z \in\rho(A_\Theta)\cap\rho(A_D)\cap\rho(A_N)$ and 
${\wti z}\in\rho\big(\widetilde A_{\Theta^*}\big) \cap \rho\big(\widetilde A_D\big) \cap 
\rho\big(\widetilde A_N\big)$ one obtains via \eqref{pp1} and \eqref{plpl} the identity
\begin{align}
 \bigl(D(z)-\Theta\bigr)^{-1}&=\bigl(D(\ol{\wti z})-\Theta\bigr)^{-1}\lb{4.69jj}\\
 &\quad+(z - \ol{\wti z})\bigl(\widetilde P_{\Theta^*}({\wti z})\bigr)^* 
 \bigl(I_{L^2(\Omega)}+(z - \ol{\wti z})(A_\Theta- z I_{L^2(\Omega)})^{-1}\bigr)
 P_\Theta(\ol{\wti z}).    \no
\end{align}
Here, the fact that $\ol{\wti z} \in \rho(A_\Theta)\cap\rho(A_D)\cap\rho(A_N)$ has been used.   It follows from \eqref{4.69jj} that the map 
\begin{equation}\label{gtz}
z \mapsto (D(z)-\Theta)^{-1} 
\end{equation}
is holomorphic on the set $\rho(A_\Theta)\cap\rho(A_D)\cap\rho(A_N)$ and that it admits an analytic continuation to the set $\rho(A_\Theta)$.
One also infers from \eqref{compi} that the values of this analytic continuation are compact operators in $L^2(\partial\Omega)$.
Moreover, the fact that $z \mapsto (A_\Theta- z I_{L^2(\Omega)})^{-1}$ is finitely meromorphic on $\bbC$ implies that the map in \eqref{gtz} is finitely meromorphic on $\bbC$ 
(cf. Example~\ref{ex2}), completing the proof of Lemma~\ref{dt1}.
\end{proof}

The next theorems contain the index formulas that constitute the main results in this section. 
To set the stage, we also verify Krein-type resolvent formulas which relate the inverses
$(A_\Theta- z I_{L^2(\Omega)})^{-1}$ and 
$\big(\widetilde A_{\Theta^*}- {\wti z} I_{L^2(\Omega)}\big)^{-1}$ with the resolvents of the Dirichlet realizations $A_D$ and $\widetilde A_D$, respectively.
For the self-adjoint case, such formulas are well-known and can be found, for example, in \cite{AB09}, \cite{BGMM16}, \cite{BL07}, \cite{BM14}, \cite{BMNW08}, \cite{GM08}, \cite{GM11}, \cite{Ma10}, \cite{PR09}, \cite{Po12}.
For dual pairs of elliptic differential operators we refer to \cite{BGW09}, and for a more abstract operator theory framework, see \cite{MM02} and \cite{MM03}. The present version 
is partly inspired by \cite[Theorem 6.16]{BL12} and can be regarded as a non-self-adjoint variant for dual pairs of Schr\"{o}dinger operators with complex-valued potentials.

\begin{theorem}\label{k1}
Assume Hypotheses \ref{h4.2} and \ref{h4.7}.  For $z \in\rho(A_D)\cap\rho(A_\Theta)$ the Krein-type resolvent formula
  \begin{equation}\label{kreinformel}
   (A_\Theta- z I_{L^2(\Omega)})^{-1}=(A_D- z I_{L^2(\Omega)})^{-1} 
   + P(z) (D(z)-\Theta)^{-1}\widetilde P(\ol{z})^*
  \end{equation}
holds, and 
\begin{equation}\label{indi}
\wti{\ind}_{C(z_0; \varepsilon)}(D(\cdot)-\Theta)=m_a(z_0;A_\Theta)-m_a(z_0;A_D), 
\quad z_0\in \bbC.
\end{equation}
\end{theorem}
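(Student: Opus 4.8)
The plan is to prove the Krein-type resolvent formula \eqref{kreinformel} first, by a direct boundary-value-problem computation, and then to derive the index formula \eqref{indi} from it together with Lemma~\ref{dt1} and Corollary~\ref{cori} via a residue-and-trace argument.

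For \eqref{kreinformel}, fix $z\in\rho(A_D)\cap\rho(A_\Theta)$ and $h\in L^2(\Omega)$, and set $u=(A_\Theta-zI_{L^2(\Omega)})^{-1}h$ and $v=(A_D-zI_{L^2(\Omega)})^{-1}h$. Since $A_\Theta$ and $A_D$ both act as $\cL$ on their domains, one has $(\cL-z)u=h=(\cL-z)v$, so $f_z:=u-v\in H^{3/2}_\Delta(\Omega)$ satisfies $\cL f_z-z f_z=0$ and $\gamma_D f_z=\gamma_D u$ (because $\gamma_D v=0$), whence $f_z=P(z)\gamma_D u$. Applying $\gamma_N$, using the boundary condition $\gamma_N u=\Theta\gamma_D u$ from $u\in\dom(A_\Theta)$ and the identity $\gamma_N v=\gamma_N(A_D-zI_{L^2(\Omega)})^{-1}h=-\widetilde P(\ol z)^*h$ from Lemma~\ref{lemlem}$\,\widetilde{(i)}$, I obtain $D(z)\gamma_D u=\gamma_N f_z=\Theta\gamma_D u+\widetilde P(\ol z)^*h$, that is, $(D(z)-\Theta)\gamma_D u=\widetilde P(\ol z)^*h$. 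Since $(D(z)-\Theta)^{-1}\in\cB(L^2(\partial\Omega))$ maps $L^2(\partial\Omega)$ into $H^1(\partial\Omega)=\dom(P(z))$ by Lemma~\ref{dt1}$\,(i)$, it follows that $\gamma_D u=(D(z)-\Theta)^{-1}\widetilde P(\ol z)^*h$, and therefore $u-v=P(z)(D(z)-\Theta)^{-1}\widetilde P(\ol z)^*h$; as $h$ was arbitrary, this is \eqref{kreinformel}.

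For \eqref{indi}, I would first record that $M(\cdot):=D(\cdot)-\Theta$ satisfies Hypothesis~\ref{h3.5} with $\Omega=\bbC$ and $\cD_0=\sigma(A_D)\cup\sigma(A_\Theta)$ (a discrete set, since $A_D$ and $A_\Theta$ have compact resolvents), so that the left-hand side of \eqref{indi} makes sense: property $(i)$ holds as $\dom(M(z))=H^1(\partial\Omega)$; $(ii)$ and $(iii)$ are Lemma~\ref{dt1}$\,(i)$; $(iv)$ is the holomorphy of $z\mapsto D(z)\varphi$ in Lemma~\ref{lemlem}$\,(iii)$; and $(v)$ is Corollary~\ref{cori} (with $M'(z)=D'(z)$). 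Next I would fix $z_0\in\bbC$ and $\eps>0$ so small that $D(z_0;\eps)\backslash\{z_0\}\subset\rho(A_D)\cap\rho(A_\Theta)$, and use Example~\ref{ex1} to write, with $R(\zeta):=(A_\Theta-\zeta I_{L^2(\Omega)})^{-1}-(A_D-\zeta I_{L^2(\Omega)})^{-1}$,
\[
m_a(z_0;A_\Theta)-m_a(z_0;A_D)=-\frac{1}{2\pi i}\,\tr_{L^2(\Omega)}\!\bigg(\ointctrclockwise_{C(z_0;\eps)}d\zeta\,R(\zeta)\bigg),
\]
the contour integral being of finite rank. By \eqref{kreinformel}, $R(\zeta)=P(\zeta)(D(\zeta)-\Theta)^{-1}\widetilde P(\ol\zeta)^*=\ol{P(\zeta)}(D(\zeta)-\Theta)^{-1}\widetilde P(\ol\zeta)^*$, the last equality holding because $(D(\zeta)-\Theta)^{-1}$ maps into $H^1(\partial\Omega)=\dom(P(\zeta))$. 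Since the operator-valued functions $\ol{P(\cdot)}$, $\widetilde P(\ol{\,\cdot\,})^*$, $(D(\cdot)-\Theta)^{-1}$, and $\ol{D'(\cdot)}$ are all finitely meromorphic on $\bbC$ (by Lemma~\ref{lemlem}$\,(ii)$, Lemma~\ref{dt1}$\,(i)$, Corollary~\ref{cori}, and Examples~\ref{ex1}, \ref{ex2}), the trace identity \eqref{wss}, in its version for operators acting between $L^2(\Omega)$ and $L^2(\partial\Omega)$, lets me cycle the factor $\ol{P(\zeta)}$ to the right and then use $\widetilde P(\ol\zeta)^*\ol{P(\zeta)}=-\ol{D'(\zeta)}=-\ol{M'(\zeta)}$ from Corollary~\ref{cori} (the two sign changes cancelling); this rewrites the right-hand side above as $\tr_{L^2(\partial\Omega)}\big(\tfrac{1}{2\pi i}\ointctrclockwise_{C(z_0;\eps)}d\zeta\,M(\zeta)^{-1}\ol{M'(\zeta)}\big)$, which by the second representation in \eqref{4.15} (Definition~\ref{d3.6}) equals $\wti{\ind}_{C(z_0;\eps)}(D(\cdot)-\Theta)$. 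This proves \eqref{indi}, the case $z_0\in\rho(A_D)\cap\rho(A_\Theta)$ being the trivial one in which both sides vanish.

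I expect the resolvent formula \eqref{kreinformel} to be routine given Lemma~\ref{lemlem} and Lemma~\ref{dt1}. The delicate point is making the trace manipulation rigorous: one must check that every operator-valued function entering the products under the contour integrals is finitely meromorphic on $\bbC$, so that the integrals are of finite rank and the cyclicity identity \eqref{wss} legitimately applies, and one must verify that $D(\cdot)-\Theta$ genuinely falls under Hypothesis~\ref{h3.5}. Thus the main obstacle is this bookkeeping rather than a single calculation; it reduces entirely to Lemma~\ref{dt1} (notably the finite meromorphy of $(D(\cdot)-\Theta)^{-1}$ proved there), Corollary~\ref{cori}, and the examples of Section~\ref{sec2}.
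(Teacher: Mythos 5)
Your proposal is correct and follows essentially the same route as the paper: the index formula \eqref{indi} is obtained exactly as in the text (verify Hypothesis~\ref{h3.5} for $D(\cdot)-\Theta$ via Lemma~\ref{dt1} and Corollary~\ref{cori}, then combine the Krein formula with the cyclicity identity \eqref{wss} and $\widetilde P(\ol\zeta)^*\ol{P(\zeta)}=-\ol{D'(\zeta)}$), merely read in the opposite direction. The only cosmetic difference is in \eqref{kreinformel}, where you derive the perturbation term by applying $\gamma_D$ and $\gamma_N$ to the resolvent difference, whereas the paper verifies that the candidate right-hand side lies in $\dom(A_\Theta)$ and is mapped to $f$ by $A_\Theta-zI_{L^2(\Omega)}$; both rest on the same inputs (Lemma~\ref{lemlem}$\,\widetilde{(i)}$, $(iii)$ and Lemma~\ref{dt1}$\,(i)$).
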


\begin{theorem}\label{k2}
Assume Hypotheses \ref{h4.2} and \ref{h4.7}.  For ${\wti z} \in\rho\big(\widetilde A_D\big) 
\cap\rho\big(\widetilde A_{\Theta^*}\big)$ the Krein-type resolvent formula
\begin{equation}
\big(\widetilde A_{\Theta^*} - {\wti z} I_{L^2(\Omega)}\big)^{-1} = \big(\widetilde A_D - {\wti z} I_{L^2(\Omega)}\big)^{-1}+\widetilde P({\wti z}) \big(\widetilde D({\wti z}) 
- \Theta^*\big)^{-1}P(\ol{\wti z})^*
  \end{equation}
  holds and
\begin{equation}\label{indi2}
\wti{\ind}_{C(z_0; \varepsilon)} \big(\widetilde D(\cdot)-\Theta^*\big) 
= m_a\big(z_0;\widetilde A_{\Theta^*}\big)-m_a \big(z_0;\widetilde A_D\big),\quad z_0\in\bbC.
\end{equation}
\end{theorem}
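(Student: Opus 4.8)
\emph{Overall strategy.} The plan is to run the proof of Theorem~\ref{k1} almost verbatim, replacing throughout $A_\Theta$ by $\widetilde A_{\Theta^*}$, $A_D$ by $\widetilde A_D$, $D(\cdot)$ by $\widetilde D(\cdot)$, $\Theta$ by $\Theta^*$, $P(\cdot)$ by $\widetilde P(\cdot)$, $\widetilde P(\cdot)$ by $P(\cdot)$, and the spectral parameter $z$ by $\widetilde z$, invoking Lemma~\ref{lemlem}~$\widetilde{(i)}$,~$\widetilde{(ii)}$,~$\widetilde{(iii)}$, Lemma~\ref{dt1}~$\widetilde{(i)}$, and Corollary~\ref{cori} wherever their untilded analogues were used. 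For the Krein-type resolvent formula there is an even quicker route: obtain it as a Hilbert space adjoint of \eqref{kreinformel}. Indeed, for $\widetilde z\in\rho(\widetilde A_D)\cap\rho(\widetilde A_{\Theta^*})$, the adjoint relations \eqref{adj} and \eqref{adjttt} give $\overline{\widetilde z}\in\rho(A_D)\cap\rho(A_\Theta)$, so \eqref{kreinformel} holds at $z=\overline{\widetilde z}$:
\[
(A_\Theta-\overline{\widetilde z}\,I_{L^2(\Omega)})^{-1}=(A_D-\overline{\widetilde z}\,I_{L^2(\Omega)})^{-1}+P(\overline{\widetilde z})\,(D(\overline{\widetilde z})-\Theta)^{-1}\,\widetilde P(\widetilde z)^*.
\]
Taking adjoints of both sides, using $A_D^*=\widetilde A_D$, $A_\Theta^*=\widetilde A_{\Theta^*}$, the boundedness of $\widetilde P(\widetilde z)$ (so that $\widetilde P(\widetilde z)^{**}$ equals its continuous extension, again denoted $\widetilde P(\widetilde z)$), and \eqref{ccA} rewritten as $\big((D(\overline{\widetilde z})-\Theta)^{-1}\big)^*=\big(\widetilde D(\widetilde z)-\Theta^*\big)^{-1}$, one obtains exactly the asserted formula
\[
\big(\widetilde A_{\Theta^*}-\widetilde z\,I_{L^2(\Omega)}\big)^{-1}=\big(\widetilde A_D-\widetilde z\,I_{L^2(\Omega)}\big)^{-1}+\widetilde P(\widetilde z)\big(\widetilde D(\widetilde z)-\Theta^*\big)^{-1}P(\overline{\widetilde z})^*.
\]
(Alternatively one may prove it from scratch by repeating Steps~1--7 of the proof of Lemma~\ref{dt1} together with the first part of the proof of Theorem~\ref{k1} under the substitutions above.)

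\emph{Well-definedness of the index.} Next I would verify that $\widetilde D(\cdot)-\Theta^*$ satisfies Hypothesis~\ref{h3.5} on $\Omega=\bbC$ with $\cD_0=\sigma(\widetilde A_D)\cup\sigma(\widetilde A_{\Theta^*})$ (a discrete set, since both operators have compact resolvents), so that $\wti{\ind}_{C(z_0;\varepsilon)}\big(\widetilde D(\cdot)-\Theta^*\big)$ is meaningful: item~$(i)$ holds with $\cM_0=\dom\big(\widetilde D(\widetilde z)\big)=H^1(\partial\Omega)$, which is $\widetilde z$-independent because $\Theta^*\in\cB(L^2(\partial\Omega))$; items~$(ii)$ and $(iii)$ are Lemma~\ref{dt1}~$\widetilde{(i)}$; item~$(iv)$ is the holomorphy of $\widetilde z\mapsto\widetilde D(\widetilde z)\psi$ for $\psi\in H^1(\partial\Omega)$ from Lemma~\ref{lemlem}~$\widetilde{(iii)}$; and item~$(v)$ is Corollary~\ref{cori}, which supplies the bounded extension $\overline{\big(\widetilde D(\cdot)-\Theta^*\big)'(\widetilde z)}=\overline{\widetilde D'(\widetilde z)}=-P(\overline{\widetilde z})^*\overline{\widetilde P(\widetilde z)}$, analytic on $\rho(\widetilde A_D)$ and finitely meromorphic on $\bbC$.

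\emph{The index formula.} Finally, fix $z_0\in\bbC$ and $\varepsilon>0$ small, and express the difference of algebraic multiplicities via Riesz projections (Example~\ref{ex1}):
\[
m_a\big(z_0;\widetilde A_{\Theta^*}\big)-m_a\big(z_0;\widetilde A_D\big)=-\tr_{L^2(\Omega)}\!\bigg(\frac{1}{2\pi i}\ointctrclockwise_{C(z_0;\varepsilon)}d\widetilde z\,\Big[\big(\widetilde A_{\Theta^*}-\widetilde z\,I_{L^2(\Omega)}\big)^{-1}-\big(\widetilde A_D-\widetilde z\,I_{L^2(\Omega)}\big)^{-1}\Big]\bigg).
\]
I would then insert the Krein formula, invoke the cyclicity of the trace for products of finitely meromorphic operator-valued functions (identity \eqref{ws}--\eqref{wss}, in its obvious form for operators acting between $L^2(\partial\Omega)$ and $L^2(\Omega)$; the factors $\widetilde P(\cdot)$, $\big(\widetilde D(\cdot)-\Theta^*\big)^{-1}$, and $P(\overline{\,\cdot\,})^*$ being finitely meromorphic by Lemma~\ref{lemlem}~$\widetilde{(i)}$,~$\widetilde{(ii)}$, Lemma~\ref{dt1}~$\widetilde{(i)}$, and Example~\ref{ex2}) to move $\widetilde P(\widetilde z)$ to the right, and use \eqref{xyxy} to rewrite $P(\overline{\widetilde z})^*\overline{\widetilde P(\widetilde z)}=-\overline{\widetilde D'(\widetilde z)}$. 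This yields
\[
m_a\big(z_0;\widetilde A_{\Theta^*}\big)-m_a\big(z_0;\widetilde A_D\big)=\tr_{L^2(\partial\Omega)}\!\bigg(\frac{1}{2\pi i}\ointctrclockwise_{C(z_0;\varepsilon)}d\widetilde z\,\big(\widetilde D(\widetilde z)-\Theta^*\big)^{-1}\,\overline{\widetilde D'(\widetilde z)}\bigg)=\wti{\ind}_{C(z_0;\varepsilon)}\big(\widetilde D(\cdot)-\Theta^*\big),
\]
by the second representation in Definition~\ref{d3.6} (here $\big(\widetilde D(\cdot)-\Theta^*\big)'=\widetilde D'(\cdot)$ as $\Theta^*$ is constant). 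The cases $z_0\in\rho(\widetilde A_D)$ and $z_0\in\rho(\widetilde A_{\Theta^*})$ are covered automatically, the relevant Riesz projection being zero and $m_a(z_0;\,\cdot\,)=0$ by convention. I expect the cyclicity step to be the only delicate point: one must confirm that each factor of $\widetilde P(\widetilde z)\big(\widetilde D(\widetilde z)-\Theta^*\big)^{-1}P(\overline{\widetilde z})^*$ is finitely meromorphic at $z_0$ so that the rearrangement is legitimate and the contour integral remains of finite rank; the rest is bookkeeping with conjugates, strictly parallel to the proof of Theorem~\ref{k1}.
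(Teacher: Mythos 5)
Your proposal is correct and is essentially the argument the paper intends: Theorem~\ref{k2} is the ``tilded'' analogue of Theorem~\ref{k1}, proved by the same chain Lemma~\ref{lemlem}\,$\widetilde{(i)}$--$\widetilde{(iii)}$, Corollary~\ref{cori}, Lemma~\ref{dt1}\,$\widetilde{(i)}$, verification of Hypothesis~\ref{h3.5}, Krein formula, cyclicity of the trace, and the identity \eqref{xyxy}, with all signs checking out as you have them. The one point where you genuinely deviate is the derivation of the resolvent formula by taking adjoints of \eqref{kreinformel} at $z=\ol{\wti z}$, using \eqref{adj}, \eqref{adjttt}, \eqref{ccA}, and $\widetilde P(\wti z)^{**}=\overline{\widetilde P(\wti z)}$ (whose restriction to $\ran\big((\widetilde D(\wti z)-\Theta^*)^{-1}\big)=H^1(\partial\Omega)$ is $\widetilde P(\wti z)$ itself); this is legitimate since every factor is bounded on its domain and the full product is everywhere defined, and it buys you the resolvent identity for free from Theorem~\ref{k1} instead of re-running the direct construction. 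Either route is fine; the index computation that follows is, as you say, bookkeeping identical to the untilded case.
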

\begin{proof}[Proof of Theorem~\ref{k1}]
Fix $z \in\rho(A_D)\cap\rho(A_\Theta)$.~One recalls that according to Lemma~\ref{dt1},
\begin{equation}
(D(z)-\Theta)^{-1}\in\cB_\infty(L^2(\partial\Omega)).
\end{equation}
Moreover, since 
\begin{equation}
\dom(P(z))=\dom\bigl(D(z)-\Theta\bigr)=\ran\bigl((D(z)-\Theta)^{-1}\bigr),
\end{equation}
the perturbation term 
\begin{equation}
P(z)\bigl(D(z)-\Theta\bigr)^{-1}\widetilde P(\ol{z})^*
\end{equation}
on the right-hand side of \eqref{kreinformel} is well-defined. 
Next, let $f\in L^2(\Omega)$ and consider the function 
 \begin{equation}
  h=(A_D- z I_{L^2(\Omega)})^{-1}f+P(z) (D(z)-\Theta)^{-1} 
  \widetilde P(\ol{z})^*f.
 \end{equation}
 We claim that $h\in H^{3/2}_\Delta(\Omega)$ satisfies the boundary condition
\begin{equation}\label{bc}
 \Theta\gamma_D h=\gamma_N f.
\end{equation}
First of all, it is clear that $h\in H^{3/2}_\Delta(\Omega)$ since $\dom (A_D)\subset H^{3/2}_\Delta(\Omega)$ by \eqref{ad} 
and $\ran(P(z))\subset H^{3/2}_\Delta(\Omega)$ by Definition \ref{pn}. 
In order to check \eqref{bc} one observes that
\begin{equation}\label{so}
 \gamma_D h=\gamma_D P(z)(D(z)-\Theta)^{-1}\widetilde P(\ol{z})^*f
           = (D(z)-\Theta)^{-1}\widetilde P(\ol{z})^*f, 
 \end{equation}
and 
\begin{equation}\label{undso}
\begin{split}
 \gamma_N h&=\gamma_N(A_D- z I_{L^2(\Omega)})^{-1}f+\gamma_N P(z) 
 (D(z)-\Theta)^{-1}\widetilde P(\ol{z})^*f\\
           &=-\widetilde P(\ol{z})^*f+D(z) (D(z)-\Theta)^{-1} 
           \widetilde P(\ol{z})^*f\\
           &=\Theta (D(z)-\Theta)^{-1}\widetilde P(\ol{z})^*f,
\end{split}
 \end{equation}
 where we have used Lemma~\ref{lemlem}\,$\widetilde{(i)}$ and the definition of the Dirichlet-to-Neumann map. At this point it is clear from \eqref{so} and \eqref{undso}
 that \eqref{bc} holds. Thus, one concludes $h\in\dom (A_\Theta)$ and hence it follows from 
 \begin{align}
   &(A_\Theta- z I_{L^2(\Omega)})h\no\\
   &\quad =(A_\Theta- z I_{L^2(\Omega)})\bigl((A_D- z I_{L^2(\Omega)})^{-1}f+P(z) 
   (D(z)-\Theta)^{-1}\widetilde P(\ol{z})^*f\bigr)\no\\
   &\quad =(\cL- z I_{L^2(\Omega)})(A_D- z I_{L^2(\Omega)})^{-1}f+(\cL- z I_{L^2(\Omega)})P(z) 
   (D(z)-\Theta)^{-1}\widetilde P(\ol{z})^*f\no\\
   &\quad =f
  \end{align}
that \eqref{kreinformel} holds as well.

Next we will verify that the map 
\begin{equation}\label{asdfg}
z \mapsto M(z) = D(z) - \Theta,\quad z \in\rho(A_D),
\end{equation}
satisfies the assumptions in Hypothesis~\ref{h3.5} with $\Omega=\bbC$ and $\cD_0=\sigma_p(A_D)\cup\sigma_p(A_\Theta)$. First, one recalls that the values of 
$D(\cdot)$ in \eqref{asdfg} are closed operators in $L^2(\partial\Omega)$ according to Lemma~\ref{lemlem}\,($iii$) and since $\Theta\in\cB(L^2(\partial\Omega))$ 
the same is true for the values of $M(\cdot)$.
It is also clear from Lemma~\ref{lemlem}\,($iii$) that 
\begin{equation}
\dom (M(z))=\dom(D(z))=H^1(\partial\Omega)
\end{equation}
is independent of $z$, that is, Hypothesis~\ref{h3.5}\,($i$) holds. Furthermore, it follows 
from Lemma~\ref{dt1}\,($i$)
that $M(z)^{-1}\in\cB(L^2(\partial\Omega))$ for all $z \in\bbC\backslash \cD_0$, and that $M(\cdot)^{-1}$ is analytic on $\bbC\backslash \cD_0$ and finitely meromorphic on $\bbC$. Hence, items ($ii$) and ($iii$) in Hypothesis~\ref{h3.5} are satisfied as well. Finally, the validity of items ($iv$) and ($v$) in  
Hypothesis~\ref{h3.5} follow from Lemma~\ref{lemlem}\,($iii$) and Corollary~\ref{cori}.

It remains to prove the index formula \eqref{indi}. Making use of Corollary~\ref{cori} and \eqref{kreinformel}, one obtains for  $0 < \varepsilon $ sufficiently small, 
\begin{align}
&\wti{\ind}_{C(z_0; \varepsilon)}(M(\cdot)) 
=  {\tr}_{L^2(\partial\Omega)}\bigg(\f{1}{2\pi i} 
\ointctrclockwise_{C(z_0; \varepsilon)} d\zeta \,  M(\zeta)^{-1} \overline{M'(\zeta)}\bigg)  \no \\
&\quad ={\tr}_{L^2(\partial\Omega)}\bigg(\f{1}{2\pi i} 
\ointctrclockwise_{C(z_0; \varepsilon)} d\zeta \,  \bigl(D(\zeta)-\Theta\bigr)^{-1} \overline{D'(\zeta)}\bigg)
\no \\
&\quad =-\f{1}{2\pi i} 
\ointctrclockwise_{C(z_0; \varepsilon)} d\zeta \, {\tr}_{L^2(\partial\Omega)}\Big( \bigl(D(\zeta)-\Theta\bigr)^{-1} \widetilde P(\overline\zeta)^*\overline{P(\zeta)}\Big) \no \\
&\quad =-\f{1}{2\pi i} 
\ointctrclockwise_{C(z_0; \varepsilon)} d\zeta \,  {\tr}_{L^2(\Omega)}\Big(\overline{P(\zeta)}\bigl(D(\zeta)-\Theta\bigr)^{-1} \widetilde P(\overline\zeta)^*\Big) \no \\
&\quad ={\tr}_{L^2(\Omega)}\bigg(-\f{1}{2\pi i} 
\ointctrclockwise_{C(z_0; \varepsilon)} d\zeta \,  P(\zeta)\bigl(D(\zeta)-\Theta\bigr)^{-1} \widetilde P(\overline\zeta)^*\bigg) \no \\
&\quad ={\tr}_{L^2(\Omega)}\bigg(-\f{1}{2\pi i} 
\ointctrclockwise_{C(z_0; \varepsilon)} d\zeta \,  \bigl((A_\Theta-\zeta I_{L^2(\Omega)})^{-1}-(A_D-\zeta I_{L^2(\Omega)})^{-1}\bigr)\bigg)
\no \\
&\quad =\tr_{L^2(\Omega)}(P(z_0;A_\Theta))-\tr_{L^2(\Omega)}(P(z_0;A_D))\no\\
&\quad =m_a(z_0;A_\Theta)-m_a(z_0;A_D), 
\end{align}  
where $P(z_0;A_\Theta)$ and $P(z_0;A_D)$ denote the Riesz projections onto the algebraic eigenspaces of $A_\Theta$ and $A_D$ corresponding to $z_0$;
cf. Example~\ref{ex1}.
\end{proof}

\section{Closed Extensions of Symmetric Operators and Abstract Weyl--Titchmarsh $M$-Functions}\label{sec4}

Let $B_1$ and $B_2$ be densely defined closed operators in a separable complex Hilbert space $\mathfrak H$ such that $\rho(B_1)\cap\rho(B_2)\not=\emptyset$
and consider the intersection $S=B_1\cap B_2$ of $B_1$ and $B_2$, which is a closed operator of the form
\begin{equation}\label{sop}
  Sf=B_1f=B_2f, \quad 
  \dom(S)= \{f\in\dom (B_1)\cap\dom (B_2) \,|\, B_1 f= B_2 f\}. 
\end{equation}

\begin{hypothesis}\lb{h5.1}
Assume that $S$ in \eqref{sop} is densely defined and symmetric in $\mathfrak H$ with equal 
deficiency indices.
Let $A_0$ be a fixed self-adjoint extension of $S$ in $\mathfrak H$, and assume that for $j=1,2$ the operators $A_0$ and $B_j$, as well as $A_0$ and $B_j^*$, are disjoint extensions of $S$,
that is,
\begin{equation}\label{discon}
 S=A_0\cap B_1=A_0\cap B_2=A_0\cap B_1^*=A_0\cap B_2^*.
\end{equation}
\end{hypothesis}

It follows from Hypothesis~\ref{h5.1} that both operators $B_1$ and $B_2$ are closed restrictions of the adjoint $S^*$ of $S$, and hence 
$B_1$ and $B_2$ can be parametrized with the help of a boundary triple for $S^*$ and closed parameters $\Theta_1$ and $\Theta_2$ in $\mathcal G$.
In the same manner, $B_1^*$ and $B_2^*$ are closed restrictions of $S^*$ and by \eqref{ats} they correspond to the parameters $\Theta_1^*$ and $\Theta_2^*$ in $\mathcal G$.
The assumption that for $j=1,2$ the operators $A_0$ and $B_j$, and $A_0$ and $B_j^*$ are disjoint extensions of $S$ implies that $\Theta_j$ and $\Theta_j^*$, $j=1,2$, 
are closed operators, and hence their domains are dense in $\mathcal G$.
We refer the reader to Appendix A for a brief introduction to the theory of boundary triples.

The following lemma is an immediate consequence of Proposition~\ref{propcon}, 
\eqref{bij}--\eqref{ats}, and \eqref{disjointq}

\begin{lemma}\label{deflem}
Assume that $B_1$, $B_2$, $S$ and $A_0$ satisfy Hypothesis~\ref{h5.1}. Then there exists a boundary triple 
 $\{\mathcal G,\Gamma_0,\Gamma_1\}$ for $S^*$, and densely defined closed operators 
 $\Theta_1,\Theta_2,\Theta_1^*,\Theta_2^*\in\mathcal C(\mathcal G)$, such that
 $A_0=S^*\upharpoonright\ker(\Gamma_0)$ and
\begin{equation}\label{bt12}
\begin{split}
 B_1&=S^*\upharpoonright\ker(\Gamma_1-\Theta_1\Gamma_0), \quad 
 B_2=S^*\upharpoonright\ker(\Gamma_1-\Theta_2\Gamma_0),\\
 B_1^*&=S^*\upharpoonright\ker(\Gamma_1-\Theta_1^*\Gamma_0), \quad \;
 B_2^*=S^*\upharpoonright\ker(\Gamma_1-\Theta_2^*\Gamma_0).
 \end{split}
\end{equation}
\end{lemma}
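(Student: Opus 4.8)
The plan is to derive the lemma directly from the boundary-triple machinery assembled in Appendix~\ref{sA}. First I would invoke the existence statement (Proposition~\ref{propcon}) to fix a boundary triple $\{\mathcal G,\Gamma_0,\Gamma_1\}$ for $S^*$ that is adapted to the prescribed self-adjoint extension $A_0$, that is, normalized so that $A_0=S^*\upharpoonright\ker(\Gamma_0)$; this is possible precisely because $A_0$ is a self-adjoint extension of the densely defined symmetric operator $S$, which has equal deficiency indices by Hypothesis~\ref{h5.1}.

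Next I would use the abstract parametrization of closed intermediate extensions. By Hypothesis~\ref{h5.1} the operators $B_1,B_2,B_1^*,B_2^*$ are all closed restrictions of $S^*$ that extend $S$. By the correspondence \eqref{bij}--\eqref{ats} between such extensions and closed linear relations in $\mathcal G$, there are closed linear relations $\Theta_1,\Theta_2$ in $\mathcal G$ with $B_j=S^*\upharpoonright\ker(\Gamma_1-\Theta_j\Gamma_0)$ for $j=1,2$; applying the adjoint formula from \eqref{ats} then yields $B_j^*=S^*\upharpoonright\ker(\Gamma_1-\Theta_j^*\Gamma_0)$, where $\Theta_j^*$ denotes the adjoint relation, and uniqueness of the parametrization shows that the relation associated with $B_j^*$ is exactly $\Theta_j^*$.

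It then remains to upgrade the linear relations $\Theta_j$ and $\Theta_j^*$ to densely defined closed \emph{operators}, and this is where the disjointness hypotheses enter via \eqref{disjointq}: the identity $S=A_0\cap B_j$ says that $B_j$ and $A_0=S^*\upharpoonright\ker(\Gamma_0)$ are disjoint extensions of $S$, which by \eqref{disjointq} is equivalent to $\Theta_j$ being single-valued, i.e.\ a closed operator in $\mathcal G$; likewise $S=A_0\cap B_j^*$ forces $\Theta_j^*$ to be a closed operator. Finally, for a closed linear relation the adjoint is single-valued if and only if the relation is densely defined, so single-valuedness of $\Theta_j^*$ gives that $\dom(\Theta_j)$ is dense in $\mathcal G$, and single-valuedness of $\Theta_j$ (equivalently, of $(\Theta_j^*)^*=\Theta_j$, since $\Theta_j$ is closed) gives that $\dom(\Theta_j^*)$ is dense. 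Hence $\Theta_1,\Theta_2,\Theta_1^*,\Theta_2^*\in\mathcal C(\mathcal G)$ and \eqref{bt12} holds.

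The one genuinely delicate point is the bookkeeping in the linear-relations formalism — keeping straight the dictionary ``$A_0$ and $B_j$ disjoint'' $\leftrightarrow$ ``$\Theta_j$ is an operator'' together with ``$\Theta_j$ densely defined'' $\leftrightarrow$ ``$\Theta_j^*$ is an operator'', and correctly identifying the parameter of $B_j^*$ with $\Theta_j^*$. Once the cited Appendix results are in place, however, each of these steps is immediate, so the lemma follows with no further computation.
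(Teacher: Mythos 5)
Your proposal is correct and follows exactly the route the paper takes: the paper itself presents this lemma as an immediate consequence of Proposition~\ref{propcon}, the correspondence \eqref{bij}--\eqref{ats}, and the disjointness criterion \eqref{disjointq}, which is precisely the chain you spell out (including the final observation that single-valuedness of $\Theta_j^*$ and of $\Theta_j=\Theta_j^{**}$ yields density of $\dom(\Theta_j)$ and $\dom(\Theta_j^*)$). No gaps.
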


The next theorem is the main result of this section. Here we make use of the boundary triple in Lemma~\ref{deflem} and express the difference of the algebraic multiplicities
of a discrete eigenvalue of $B_1$ and $B_2$ with the help of the corresponding Weyl--Titchmarsh function $M(\cdot)$ and the parameters $\Theta_1$ and $\Theta_2$. Theorem~\ref{indithm}
can be viewed as an abstract variant of Theorem~\ref{k1}.

\begin{theorem}\label{indithm}
Assume that $B_1$, $B_2$, $S$ and $A_0$ satisfy Hypothesis~\ref{h5.1},
choose the boundary triple  $\{\mathcal G,\Gamma_0,\Gamma_1\}$ in Lemma~\ref{deflem},  
and $\Theta_1$, $\Theta_2$ such that \eqref{bt12} holds. Let $M(\cdot)$ be the Weyl--Titchmarsh function corresponding to $\{\mathcal G,\Gamma_0,\Gamma_1\}$ and assume that 
\begin{equation}\label{assi}
z_0\in\sigma_d(B_j)\cup\rho(B_j),\quad j=1,2,  \, \text{ and } \, z_0\in\sigma_d(A_0)\cup\rho(A_0).
\end{equation}
Then there exists $\varepsilon_0>0$ such that both functions $\Theta_j - M(\cdot)$, $j=1,2$, satisfy Hypothesis~\ref{h3.5} with $\Omega=D(z_0;\varepsilon_0)$
and $\mathcal D_0=\{z_0\}$, and the index formula
\begin{equation}\label{indiwow}
\wti{\ind}_{C(z_0; \varepsilon)} (\Theta_1 - M(\cdot)) -
\wti{\ind}_{C(z_0; \varepsilon)} (\Theta_2 - M(\cdot))
= m_a(z_0;B_1) -m_a(z_0; B_2)
\end{equation}
holds.
\end{theorem}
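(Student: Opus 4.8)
The plan is to mirror the strategy used in the proof of Theorem~\ref{k1}, replacing the Dirichlet-to-Neumann map analysis by the corresponding facts about the Weyl--Titchmarsh function and the abstract Krein resolvent formula from boundary triple theory. First I would invoke the boundary triple $\{\mathcal G,\Gamma_0,\Gamma_1\}$ and the parameters $\Theta_1,\Theta_2$ furnished by Lemma~\ref{deflem}, together with the standard abstract Krein-type resolvent formula, which in this setting reads (for $z\in\rho(A_0)\cap\rho(B_j)$)
\begin{equation*}
(B_j - z I_{\mathfrak H})^{-1} = (A_0 - z I_{\mathfrak H})^{-1} + \gamma(z)\big(\Theta_j - M(z)\big)^{-1}\gamma(\bar z)^*,
\end{equation*}
where $\gamma(\cdot)$ is the $\gamma$-field associated with $\{\mathcal G,\Gamma_0,\Gamma_1\}$ and $M'(z) = \gamma(\bar z)^*\gamma(z)$. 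The bulk of the preparatory work is then to verify that $M(\cdot) := \Theta_j - M(\cdot)$ satisfies all five items of Hypothesis~\ref{h3.5} on a small disk $\Omega = D(z_0;\varepsilon_0)$ with $\mathcal D_0 = \{z_0\}$. Since $M(\cdot)$ is $\mathcal B(\mathcal G)$-valued and analytic near $z_0$, and $\Theta_j$ is a fixed closed operator, item $(i)$ (constant domain $\dom(\Theta_j)$) and items $(iv)$, $(v)$ (analyticity of $M(\cdot)\varphi$ on $\dom(\Theta_j)$, and of $\overline{M'(\cdot)} = -\gamma(\bar\cdot)^*\gamma(\cdot)$, which is $\mathcal B(\mathcal G)$-valued and analytic) are essentially immediate from the basic properties of the $\gamma$-field recalled in Appendix~\ref{sA}.

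The main obstacle, exactly as the introduction warns, is items $(ii)$ and $(iii)$: one must show that $(\Theta_j - M(z))^{-1}$ exists as a bounded operator for $z$ in a punctured disk and is finitely meromorphic at $z_0$. For this I would argue as in Step~2 and Step~7 of Lemma~\ref{dt1}: bounded invertibility of $\Theta_j - M(z)$ for $z$ in the resolvent set of $B_j$ follows from the parametrization \eqref{bt12}, since $(\Theta_j - M(z))\varphi = 0$ would force the corresponding solution $\gamma(z)\varphi$ into $\ker(B_j - z I_{\mathfrak H})$; and the abstract Krein formula shows that the difference $(\Theta_j - M(z))^{-1} - (\Theta_j - M(z_1))^{-1}$ is, up to bounded analytic factors, a sandwich of the resolvent $(B_j - z I_{\mathfrak H})^{-1}$ between bounded operators, hence finitely meromorphic at $z_0 \in \sigma_d(B_j)$ by Examples~\ref{ex1} and \ref{ex2}. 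The hypothesis \eqref{assi} that $z_0 \in \sigma_d(A_0)\cup\rho(A_0)$ enters to guarantee $M(\cdot)$ itself is finitely meromorphic (or analytic) near $z_0$, so the disk $D(z_0;\varepsilon_0)$ can be chosen to avoid all other singularities; shrinking $\varepsilon_0$ so that the punctured disk lies in $\rho(B_1)\cap\rho(B_2)\cap\rho(A_0)$ except possibly at $z_0$ completes the verification of Hypothesis~\ref{h3.5}.

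With Hypothesis~\ref{h3.5} in hand for both functions, the index formula \eqref{indiwow} is obtained by the same trace computation as at the end of the proof of Theorem~\ref{k1}. For $j=1,2$ and $0<\varepsilon<\varepsilon_0$,
\begin{align*}
\wti{\ind}_{C(z_0;\varepsilon)}(\Theta_j - M(\cdot))
&= {\tr}_{\mathcal G}\bigg(\f{1}{2\pi i}\ointctrclockwise_{C(z_0;\varepsilon)} d\zeta\, (\Theta_j - M(\zeta))^{-1}\,\overline{(-M'(\zeta))}\bigg)\\
&= -\,{\tr}_{\mathcal G}\bigg(\f{1}{2\pi i}\ointctrclockwise_{C(z_0;\varepsilon)} d\zeta\, (\Theta_j - M(\zeta))^{-1}\gamma(\bar\zeta)^*\gamma(\zeta)\bigg)\\
&= -\,{\tr}_{\mathfrak H}\bigg(\f{1}{2\pi i}\ointctrclockwise_{C(z_0;\varepsilon)} d\zeta\, \gamma(\zeta)(\Theta_j - M(\zeta))^{-1}\gamma(\bar\zeta)^*\bigg)\\
&= {\tr}_{\mathfrak H}\bigg(-\f{1}{2\pi i}\ointctrclockwise_{C(z_0;\varepsilon)} d\zeta\, \big((B_j - \zeta I_{\mathfrak H})^{-1} - (A_0 - \zeta I_{\mathfrak H})^{-1}\big)\bigg)\\
&= {\tr}_{\mathfrak H}(P(z_0;B_j)) - {\tr}_{\mathfrak H}(P(z_0;A_0)) = m_a(z_0;B_j) - m_a(z_0;A_0),
\end{align*}
using the cyclicity of the trace for finite-rank contour integrals \eqref{wss}, the Krein formula, and Example~\ref{ex1}. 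Subtracting the identity for $j=2$ from the one for $j=1$ cancels the common term $m_a(z_0;A_0)$ and yields \eqref{indiwow}. I expect the only genuinely delicate point to be the finite-meromorphy of $(\Theta_j - M(\cdot))^{-1}$ at $z_0$; everything else is a faithful transcription of Section~\ref{sec3}, with the compactness arguments of Lemma~\ref{dt1} replaced by the fact that the relevant contour integrals are automatically finite-rank because the resolvent difference $(B_j - z I_{\mathfrak H})^{-1} - (A_0 - z I_{\mathfrak H})^{-1}$ has a finite-rank principal part at the discrete eigenvalue $z_0$.
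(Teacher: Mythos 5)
Your overall strategy coincides with the paper's: verify Hypothesis~\ref{h3.5} for $\Theta_j-M(\cdot)$ on a small punctured disk, then run the trace computation through Krein's formula and subtract the two identities so that $m_a(z_0;A_0)$ cancels. The concluding trace computation you write out is exactly the paper's and is correct, as are items $(i)$, $(iv)$, $(v)$ and the bounded invertibility in $(ii)$ (which is Theorem~\ref{kreinthm}\,$(i)$). The gap is in item $(iii)$, which you flag as the delicate point and then dispose of in one sentence. Krein's formula reads
\begin{equation*}
(B_j-zI_{\cH})^{-1}-(A_0-zI_{\cH})^{-1}=\gamma(z)\bigl(\Theta_j-M(z)\bigr)^{-1}\gamma(\ol z)^*,
\end{equation*}
i.e., it expresses the \emph{sandwiched} operator $\gamma(z)(\Theta_j-M(z))^{-1}\gamma(\ol z)^*$ through the resolvent difference; it does not express $(\Theta_j-M(z))^{-1}$ itself as a $z$-independent bounded factor times $(B_j-zI_{\cH})^{-1}$ times another such factor, and one cannot simply invert the $\gamma$'s, since $\gamma(z)$ maps onto the defect subspace $\ker(S^*-zI_{\cH})$ and $\gamma(\ol z)^*$ has only dense range. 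So your claim that ``the abstract Krein formula shows that the difference $(\Theta_j - M(z))^{-1} - (\Theta_j - M(z_1))^{-1}$ is, up to bounded analytic factors, a sandwich of the resolvent'' is unjustified as stated.

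What is actually required --- and what the paper supplies in Lemmas~\ref{lemma1}, \ref{lemma2} and Proposition~\ref{iiiprop} --- is the construction of \emph{new} solution operators $\gamma_{\Theta_j}(z)$ attached to the boundary map $\Gamma_0^{\Theta_j}=\Gamma_1-\Theta_j\Gamma_0$, which, because $\Theta_j$ is in general unbounded, is only densely defined and has only dense range. One must prove the density of $\ran\big(\Gamma_0^{\Theta_j}\big)$, the direct sum decomposition \eqref{dddeco}, the boundedness of $\gamma_{\Theta_j}(z)$ via the adjoint identity $\gamma_{\Theta_j}(z)^*=-\Gamma_0(B_j^*-\ol zI_{\cH})^{-1}$ (an abstract Green's identity computation), and finally the identity
\begin{equation*}
(\Theta_j-M(z))^{-1}=(\Theta_j-M(\zeta))^{-1}+(z-\zeta)\bigl(\gamma_{\Theta_j^*}(\ol\zeta)\bigr)^*\bigl(I_{\mathfrak H}+(z-\zeta)(B_j-zI_{\cH})^{-1}\bigr)\overline{\gamma_{\Theta_j}(\zeta)},
\end{equation*}
after which Examples~\ref{ex1} and \ref{ex2} apply. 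This is the abstract analogue of Steps~3--7 of Lemma~\ref{dt1} (not merely Steps~2 and 7, as you suggest; Steps~3--6 are precisely where the modified solution operators $P_\Theta(z)$ are built and shown bounded). Since this is the mathematical core of the theorem, your proof is incomplete without it.
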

\begin{proof}
We verify  that the functions $\Theta_j - M(\cdot)$, $j=1,2$, satisfy items $(i)$--$(ii)$ and 
$(iv)$--$(v)$ of Hypothesis~\ref{h3.5} 
with $\Omega=D(z_0;\varepsilon_0)$ and $\mathcal D_0=\{z_0\}$. The proof of item $(iii)$ is more involved and will be given separately after Corollary~\ref{indicor}.
First, one observes that by the assumptions in \eqref{assi} one can choose $\varepsilon_0>0$ such that the punctured disc $D(z_0;\varepsilon_0)\backslash \{z_0\}$
is contained in the set $\rho(B_1)\cap\rho(B_2)\cap\rho(A_0)$. As $\Theta_j$, $j=1,2$ are densely defined closed operators by Lemma~\ref{deflem} and the values of the Weyl--Titchmarsh function
$M(\cdot)$ are bounded operators in $\mathcal G$, the functions
\begin{equation}
 \Theta_j - M(\cdot):D(z_0;\varepsilon_0)\backslash \{z_0\}\rightarrow\mathcal C(\mathcal G),\quad z\mapsto\Theta_j - M(z),\quad j=1,2,
\end{equation}
are well-defined and of the form as in Hypothesis~\ref{h3.5} with $\Omega=D(z_0;\varepsilon_0)$
and $\mathcal D_0=\{z_0\}$. It is also clear that $\dom(\Theta_j-M(z))=\dom(\Theta_j)$ is independent of $z\in  D(z_0;\varepsilon_0)\backslash \{z_0\}$
and that $(\Theta_j - M(z))^{-1}\in\mathcal B(\mathcal G)$ for all 
$z\in  D(z_0;\varepsilon_0)\backslash \{z_0\}$ by Theorem~\ref{kreinthm}\,$(i)$. Hence 
items $(i)$ and $(ii)$ in
Hypothesis~\ref{h3.5} are satisfied. Since the Weyl--Titchmarsh function $M(\cdot)$ is analytic 
on $\rho(A_0)$ one infers 
\begin{equation}
 \frac{d}{dz} (\Theta_j - M(z)) \varphi = -\frac{d}{dz} M(z)\varphi,\quad \varphi\in\dom(\Theta_j), 
 \quad z\in D(z_0;\varepsilon_0)\backslash \{z_0\}, 
\end{equation}
and hence
\begin{equation}\label{hurraq}
 \overline{(\Theta_j-M(z))^\prime}=-M^\prime(z),
\end{equation}
that is, items $(iv)$ and $(v)$ in Hypothesis~\ref{h3.5} hold (see \eqref{s3} and Lemma~\ref{mfm} for the fact that $M'(\cdot)$ is analytic on 
$D(z_0;\varepsilon_0)\backslash \{z_0\}$ and finitely meromorphic on $D(z_0;\varepsilon_0)$).

Next, we turn to the proof of the index formula. According to Theorem~\ref{kreinthm} one 
infers $z\in\rho(B_j)\cap\rho(A_0)$ if and only if $(\Theta_j - M(z))^{-1}\in\mathcal B(\mathcal G)$ 
for $j=1,2$ and Krein's formula 
\begin{equation}\label{k45}
 (B_j-zI_{\cH})^{-1}-(A_0-zI_{\cH})^{-1} = \gamma(z)(\Theta_j - M(z))^{-1}\gamma(\ol z)^*
\end{equation}
is valid for all $z\in\rho(B_j)\cap\rho(A_0)$, $j=1,2$; here $\gamma(\cdot)$ denotes the $\gamma$-field corresponding to the boundary triple 
$\{\mathcal G,\Gamma_0,\Gamma_1\}$. 
Let $P(z_0;B_j)$, $j=1,2$, and $P(z_0;A_0)$ be the Riesz projections onto the algebraic eigenspaces of $B_j$ and $A_0$ corresponding to $z_0$; 
since $A_0$ is self-adjoint the range of $P(z_0;A_0)$ coincides with $\ker(A_0-z_0)$.
Then it follows from Definition~\ref{d3.6}, \eqref{hurraq}, \eqref{mabl}, and \eqref{k45} in a similar 
manner as in the proof of Theorem~\ref{k1} that for $0<\varepsilon$ sufficiently small, 
\begin{equation}
\begin{split}
 \wti{\ind}_{C(z_0; \varepsilon)} (\Theta_j - M(\cdot))
 &= {\tr}_{\mathcal G}\bigg(\f{1}{2\pi i} 
\ointctrclockwise_{C(z_0; \varepsilon)} d\zeta \, 
(\Theta_j - M(\zeta))^{-1} \overline{(\Theta_j - M(\zeta))^\prime} \bigg)  \\   
 &= {\tr}_{\mathcal G}\bigg(-\f{1}{2\pi i} 
\ointctrclockwise_{C(z_0; \varepsilon)} d\zeta \, 
(\Theta_j - M(\zeta))^{-1} M^\prime(\zeta) \bigg)  \\ 
  &= -\f{1}{2\pi i} 
\ointctrclockwise_{C(z_0; \varepsilon)} d\zeta \, {\tr}_{\mathcal G}\big(
(\Theta_j - M(\zeta))^{-1} \gamma(\ol \zeta)^*\gamma(\zeta)\big)   \\
 &= -\f{1}{2\pi i} 
\ointctrclockwise_{C(z_0; \varepsilon)} d\zeta \,   {\tr}_{\mathfrak H}\big(
\gamma(\zeta)(\Theta_j - M(\zeta))^{-1} \gamma(\ol \zeta)^* \big)  \\
  &= {\tr}_{\mathfrak H}\bigg(-\f{1}{2\pi i} 
\ointctrclockwise_{C(z_0; \varepsilon)} d\zeta \, \big((B_j - \zeta I_{\cH})^{-1}-(A_0-\zeta I_{\cH})^{-1}\big) \bigg)  \\
&= {\tr}_{\mathfrak H} (P(z_0;B_j)) - {\tr}_{\mathcal G} (P(z_0;A_0))\\
&= m_a(z_0;B_j)-m_a(z_0;A_0), \quad j=1,2, 
\end{split}
 \end{equation} 
and hence
\begin{align}
 &\wti{\ind}_{C(z_0; \varepsilon)} (\Theta_1 - M(\cdot))- \wti{\ind}_{C(z_0; \varepsilon)} (\Theta_2 - M(\cdot))  \no \\
 &\quad = m_a(z_0;B_1)-m_a(z_0;A_0) -  m_a(z_0;B_2)+m_a(z_0;A_0)   \no \\
 &\quad = m_a(z_0;B_1) - m_a(z_0;B_2).
\end{align}
\end{proof}

In the next corollary, we discuss the special case that the closed operator $B_1$ is self-adjoint in $\mathfrak H$. In this case we
set $A_0=B_1$ and instead of Hypothesis~\ref{h5.1} it suffices to assume that the closed symmetric operator $S=A_0\cap B_2$ in \eqref{sop} is densely defined and 
that $S=A_0\cap B_2^*$ holds. Following Lemma~\ref{deflem} and Proposition~\ref{propcon} 
one obtains a boundary triple $\{\mathcal G,\Gamma_0,\Gamma_1\}$ for $S^*$,  
and densely defined closed operators $\Theta_2,\Theta_2^*\in\mathcal C(\mathcal G)$, such that
 $A_0=S^*\upharpoonright\ker(\Gamma_0)$ and 
\begin{equation}\label{ztzt}
B_2=S^*\upharpoonright\ker(\Gamma_1-\Theta_2\Gamma_0), \quad 
B_2^*=S^*\upharpoonright\ker(\Gamma_1-\Theta_2^*\Gamma_0).
\end{equation}

\begin{corollary}\label{indicor}
Let $B_1=A_0$, $B_2$, and $S=A_0\cap B_2$ be as above, and 
choose a boundary triple  $\{\mathcal G,\Gamma_0,\Gamma_1\}$ 
and $\Theta_2$ such that $A_0=S^*\upharpoonright\ker(\Gamma_0)$ and \eqref{ztzt} holds. Let $M(\cdot)$ be the Weyl--Titchmarsh function corresponding to $\{\mathcal G,\Gamma_0,\Gamma_1\}$
and assume that
\begin{equation}
z_0\in\sigma_d(B_2)\cup\rho(B_2)\cup\sigma_d(A_0)\cup\rho(A_0).
\end{equation}
Then there exists $\varepsilon_0>0$ such that the function $\Theta_2-M(\cdot)$ satisfies 
Hypothesis~\ref{h3.5} with $\Omega=D(z_0;\varepsilon_0)$
and $\mathcal D_0=\{z_0\}$, and the index formula
\begin{equation}
\wti{\ind}_{C(z_0; \varepsilon)} \big(\Theta_2 - M(\cdot)\big) 
= m_a\big(z_0;B_2\big) -m_a\big(z_0; A_0\big)
\end{equation}
holds.
\end{corollary}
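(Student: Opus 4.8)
The plan is to treat Corollary~\ref{indicor} as the single‑operator ``half'' of Theorem~\ref{indithm}, applied to the pair $(B_2,A_0)$ rather than to $(B_1,B_2)$. It is worth noting at the outset that the corollary is \emph{not} a literal instance of Theorem~\ref{indithm}: setting $B_1=A_0$ would force $S=A_0\cap B_1=A_0$, which fails in general, and in any case $A_0$ is parametrized by the boundary condition $\Gamma_0 f=0$, not by a finite parameter $\Theta_1$. Instead I would exploit the fact that the trace computation in the proof of Theorem~\ref{indithm} is carried out separately for each index and establishes, for a single closed extension $B$ of $S$ with parameter $\Theta$ in a boundary triple with reference operator $A_0$, the identity $\wti{\ind}_{C(z_0;\varepsilon)}(\Theta-M(\cdot))=m_a(z_0;B)-m_a(z_0;A_0)$ under the appropriate localization assumption on $z_0$. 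The whole task then reduces to checking that the hypotheses of the corollary place $B_2$ in precisely this situation.

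First I would set up the boundary triple. Since $S=A_0\cap B_2=A_0\cap B_2^*$ is densely defined, closed, and symmetric, Proposition~\ref{propcon} together with the construction recalled before the statement of the corollary yields a boundary triple $\{\mathcal G,\Gamma_0,\Gamma_1\}$ for $S^*$ with $A_0=S^*\upharpoonright\ker(\Gamma_0)$ and densely defined closed parameters $\Theta_2,\Theta_2^*\in\mathcal C(\mathcal G)$ satisfying \eqref{ztzt}; let $\gamma(\cdot)$ and $M(\cdot)$ be the associated $\gamma$‑field and Weyl--Titchmarsh function. Using the standing assumption $\rho(A_0)\cap\rho(B_2)\neq\emptyset$ of Section~\ref{sec4} and the fact that, for each of $B_2$ and $A_0$, the point $z_0$ either belongs to the resolvent set or is an isolated eigenvalue of finite algebraic multiplicity, I would fix $\varepsilon_0>0$ so small that $D(z_0;\varepsilon_0)\backslash\{z_0\}\subset\rho(B_2)\cap\rho(A_0)$.

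Next I would verify Hypothesis~\ref{h3.5} for $\Theta_2-M(\cdot)$ on $\Omega=D(z_0;\varepsilon_0)$ with $\mathcal D_0=\{z_0\}$, following the proof of Theorem~\ref{indithm} for a single index: item~$(i)$ holds since $\dom(\Theta_2-M(z))=\dom(\Theta_2)$ is $z$‑independent and $M(\cdot)$ is $\mathcal B(\mathcal G)$‑valued and analytic on the punctured disc; item~$(ii)$ follows from Theorem~\ref{kreinthm}\,$(i)$, which gives $(\Theta_2-M(z))^{-1}\in\mathcal B(\mathcal G)$ for $z\in D(z_0;\varepsilon_0)\backslash\{z_0\}$; items~$(iv)$ and $(v)$ follow from $\overline{(\Theta_2-M(z))'}=-M'(z)$ together with the analyticity of $M'(\cdot)$ on the punctured disc and its finite meromorphy on $D(z_0;\varepsilon_0)$ (see \eqref{s3} and Lemma~\ref{mfm}). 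Item~$(iii)$ --- finite meromorphy of $(\Theta_2-M(\cdot))^{-1}$ at $z_0$ --- is precisely the statement established in the dedicated argument following the corollary (applied to $\Theta_2$), and I would simply invoke it. Granting Hypothesis~\ref{h3.5}, the index formula is then obtained by the same trace manipulation as in Theorem~\ref{indithm}: starting from Definition~\ref{d3.6}, substituting $\overline{(\Theta_2-M(\cdot))'}=-M'(\cdot)$ and the $\gamma$‑field identity $M'(\zeta)=\gamma(\overline\zeta)^*\gamma(\zeta)$ from \eqref{mabl}, cycling $\gamma(\zeta)$ to the front under the trace via \eqref{wss}, applying Krein's formula \eqref{k45}, and recognizing the contour integral of the resolvent difference as a difference of Riesz projections (Example~\ref{ex1}), one arrives at
\begin{align*}
\wti{\ind}_{C(z_0;\varepsilon)}(\Theta_2-M(\cdot))
&= {\tr}_{\mathcal G}\bigg(\!-\f{1}{2\pi i}\ointctrclockwise_{C(z_0;\varepsilon)} d\zeta\, (\Theta_2-M(\zeta))^{-1} M'(\zeta)\bigg) \\
&= {\tr}_{\mathfrak H}\bigg(\!-\f{1}{2\pi i}\ointctrclockwise_{C(z_0;\varepsilon)} d\zeta\, \gamma(\zeta)(\Theta_2-M(\zeta))^{-1}\gamma(\overline\zeta)^*\bigg) \\
&= {\tr}_{\mathfrak H}\bigg(\!-\f{1}{2\pi i}\ointctrclockwise_{C(z_0;\varepsilon)} d\zeta\, \big((B_2-\zeta I_{\mathfrak H})^{-1}-(A_0-\zeta I_{\mathfrak H})^{-1}\big)\bigg) \\
&= {\tr}_{\mathfrak H}(P(z_0;B_2))-{\tr}_{\mathfrak H}(P(z_0;A_0)) = m_a(z_0;B_2)-m_a(z_0;A_0),
\end{align*}
with the usual convention that the Riesz projection, and hence $m_a(z_0;\cdot)$, vanishes when $z_0$ lies in the relevant resolvent set.

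I expect the only genuine obstacle to be item~$(iii)$ of Hypothesis~\ref{h3.5}, namely showing that $(\Theta_2-M(\cdot))^{-1}$ is finitely meromorphic at the discrete eigenvalue $z_0$ of $B_2$; this rests on the structural analysis --- via Krein's formula and a Laurent‑expansion argument of the type in Example~\ref{ex1}, linking the principal part of $(\Theta_2-M(\cdot))^{-1}$ to the finite‑rank Riesz projection $P(z_0;B_2)$ --- carried out in the separate discussion after the corollary, which the proof of the corollary simply inherits. Everything else is a direct specialization of the proof of Theorem~\ref{indithm}.
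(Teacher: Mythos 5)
Your proposal is correct and follows essentially the same route as the paper: the paper gives no separate proof of Corollary~\ref{indicor}, treating it exactly as you do --- as the single-index instance of the trace computation already carried out for each $j$ in the proof of Theorem~\ref{indithm}, with the modified standing assumptions described in the paragraph preceding the corollary and with Hypothesis~\ref{h3.5}\,$(iii)$ supplied afterwards by Proposition~\ref{iiiprop}. Your observation that the corollary is not a literal specialization of Theorem~\ref{indithm} (since $B_1=A_0$ would collapse $S$) matches the paper's own remark that Hypothesis~\ref{h5.1} must be replaced by $S=A_0\cap B_2=A_0\cap B_2^*$, and your reading of the displayed condition on $z_0$ as the conjunction over $B_2$ and $A_0$ is the intended one.
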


It remains to show that the functions $\Theta_j - M(\cdot)$, $j=1,2$, satisfy 
Hypothesis~\ref{h3.5}\,$(iii)$. 
In the following considerations we discuss the general situation of unbounded closed operators $\Theta_1$ and $\Theta_2$ in Lemma~\ref{deflem} such that
\begin{equation}\label{b1}
 B_j = S^*\upharpoonright\ker(\Gamma_1-\Theta_j \Gamma_0), \quad 
 B_j^*=S^*\upharpoonright\ker(\Gamma_1-\Theta_j^*\Gamma_0), \quad j=1,2.  
\end{equation}
For the special
case of bounded operators $\Theta_1,\Theta_2\in\mathcal B(\mathcal G)$ the considerations simplify slightly and we refer the reader 
to Remark~\ref{boundedtheta} for more details.  We start with the following preliminary lemma.

\begin{lemma}\label{lemma1}
Let $\{\mathcal G,\Gamma_0,\Gamma_1\}$ be the boundary triple in Lemma~\ref{deflem}, let 
$\Theta_j \in\mathcal C(\mathcal G)$, $j=1,2$, be densely defined closed operators such that
\eqref{b1} holds, and consider the map 
\begin{equation}\label{gamt}
 \Gamma_0^{\Theta_j} = \Gamma_1-\Theta_j \Gamma_0,  \quad 
 \dom\big(\Gamma_0^{\Theta_j}\big) = \{ f\in\dom (S^*) \,|\, \Gamma_0 f\in\dom (\Theta_j)\}.
 \end{equation}
 Then the following assertions hold for $j=1,2$:
\\[1mm] 
 $(i)$ $\dom (B_j)=\ker\big(\Gamma_0^{\Theta_j}\big)$;\\[1mm]
 $(ii)$ $\ran (\Gamma_0^{\Theta_j})$ is dense in $\mathcal G$;\\[1mm]
 $(iii)$ the direct sum decomposition
\begin{equation}\label{dddeco}
 \dom\big(\Gamma_0^{\Theta_j}\big)=\dom (B_j)\,\dot 
 + \,\bigl(\ker(S^*-zI_{\cH})\cap\dom\big(\Gamma_0^{\Theta_j}\big)\bigr)
\end{equation}
holds for all $z\in\rho(B_j)$; \\[1mm]
 $(iv)$ $\dom\big(\Gamma_0^{\Theta_j}\big)$ is dense in $\dom (S^*)$ with 
respect to the graph norm.
 \end{lemma}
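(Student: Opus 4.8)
The plan is to prove the four assertions of Lemma~\ref{lemma1} in the order stated, using the standard machinery of boundary triples summarized in Appendix~\ref{sA} together with the defining relation \eqref{b1}.

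For item $(i)$, I would simply unwind the definitions: since $B_j = S^* \upharpoonright \ker(\Gamma_1 - \Theta_j\Gamma_0)$, a vector $f \in \dom(S^*)$ lies in $\dom(B_j)$ exactly when $\Gamma_1 f = \Theta_j \Gamma_0 f$, which in particular forces $\Gamma_0 f \in \dom(\Theta_j)$, i.e.\ $f \in \dom(\Gamma_0^{\Theta_j})$, and then says precisely $\Gamma_0^{\Theta_j} f = \Gamma_1 f - \Theta_j \Gamma_0 f = 0$. Hence $\dom(B_j) = \ker(\Gamma_0^{\Theta_j})$.

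For item $(ii)$, the idea is to exploit the fact that $\Theta_j^*$ is also a densely defined closed operator (a consequence of Hypothesis~\ref{h5.1}, recorded in Lemma~\ref{deflem}), so that $\dom(\Theta_j^*)$ is dense in $\mathcal G$. One identifies the orthogonal complement of $\ran(\Gamma_0^{\Theta_j})$: if $g \in \mathcal G$ satisfies $(\Gamma_1 f - \Theta_j \Gamma_0 f, g)_{\mathcal G} = 0$ for all $f \in \dom(\Gamma_0^{\Theta_j})$, then splitting this using the abstract Green identity for the boundary triple $\{\mathcal G, \Gamma_0, \Gamma_1\}$ and the parametrization \eqref{b1} of $B_j^*$ via $\Theta_j^*$, one should conclude that $g$ must be orthogonal to $\dom(\Theta_j^*)$, hence $g = 0$. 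The main subtlety here is handling domain questions for the unbounded $\Theta_j$: one must be careful that the pairing manipulations are legitimate on the dense subspace $\dom(\Gamma_0^{\Theta_j})$, which is where $\Gamma_0 f$ ranges over (a subset of) $\dom(\Theta_j)$.

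For item $(iii)$, fix $z \in \rho(B_j)$. Given $f \in \dom(\Gamma_0^{\Theta_j})$, I would set $f_0 := (B_j - z I_{\cH})^{-1}(S^* - z I_{\cH}) f$, which lies in $\dom(B_j)$, and put $f_z := f - f_0$; then $(S^* - z I_{\cH}) f_z = 0$, so $f_z \in \ker(S^* - z I_{\cH})$, and since $f, f_0 \in \dom(\Gamma_0^{\Theta_j})$ with the latter being a subspace, also $f_z \in \dom(\Gamma_0^{\Theta_j})$. This gives the decomposition; directness follows because $\dom(B_j) \cap \ker(S^* - z I_{\cH}) = \ker(B_j - z I_{\cH}) = \{0\}$ as $z \in \rho(B_j)$. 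For item $(iv)$, I would argue by density: $\dom(S^*)$ with the graph norm decomposes (for any fixed $z \in \rho(A_0) \cap \rho(B_j)$, available by \eqref{assi}) as $\dom(B_j) \dotplus \ker(S^* - z I_{\cH})$; the component $\dom(B_j) \subseteq \dom(\Gamma_0^{\Theta_j})$ already, and on the defect space $\ker(S^* - z I_{\cH})$, which via the $\gamma$-field $\gamma(z)$ is isomorphic to $\mathcal G$, the subspace $\ker(S^* - z I_{\cH}) \cap \dom(\Gamma_0^{\Theta_j})$ corresponds to $\gamma(z)\dom(\Theta_j)$ (using $\Gamma_0 \gamma(z) = I_{\mathcal G}$), which is dense because $\Theta_j$ is densely defined; assembling these two pieces and invoking continuity of $\gamma(z) : \mathcal G \to \dom(S^*)$ gives graph-norm density of $\dom(\Gamma_0^{\Theta_j})$.

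I expect item $(ii)$ to be the main obstacle: establishing density of the range of the \emph{unbounded} operator combination $\Gamma_1 - \Theta_j \Gamma_0$ requires the most care with domains, and is exactly the place where the hypothesis that $\Theta_j^*$ (not just $\Theta_j$) is densely defined and closed enters essentially. Items $(i)$, $(iii)$, $(iv)$ are essentially bookkeeping with the boundary triple formalism once $(ii)$ and the basic properties of the $\gamma$-field are in hand.
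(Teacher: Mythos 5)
Your treatment of items $(i)$, $(iii)$, and $(iv)$ is essentially the paper's proof: $(i)$ is the same unwinding of \eqref{b1}; $(iii)$ produces the same splitting $f=k+(f-k)$ with $k=(B_j-zI_{\mathfrak H})^{-1}(S^*-zI_{\mathfrak H})f\in\dom(B_j)$ and gets directness from $z\in\rho(B_j)$; and $(iv)$ combines the decomposition from $(iii)$ with the fact that $\Gamma_0\upharpoonright\ker(S^*-zI_{\mathfrak H})$ is a graph-norm isomorphism onto $\cG$ together with the density of $\dom(\Theta_j)$, exactly as in the paper.

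The only substantive divergence is item $(ii)$, which you single out as the crux. Two comments. First, the mechanism you sketch does not go through as stated: to move $\Theta_j$ onto $g$ in the pairing $(\Theta_j\Gamma_0 f,g)_{\cG}$ you would already need $g\in\dom(\Theta_j^*)$, which is not known for an arbitrary element of $\ran\big(\Gamma_0^{\Theta_j}\big)^{\perp}$, and the intermediate assertion that ``$g$ is orthogonal to $\dom(\Theta_j^*)$'' is not what such a Green-identity computation yields. Second, and more to the point, the difficulty you anticipate is not there: since $0\in\dom(\Theta_j)$, every $f\in\ker(\Gamma_0)$ belongs to $\dom\big(\Gamma_0^{\Theta_j}\big)$ and satisfies $\Gamma_0^{\Theta_j}f=\Gamma_1 f$, and surjectivity of $\Gamma$ gives $\Gamma_1(\ker(\Gamma_0))=\cG$, so $\ran\big(\Gamma_0^{\Theta_j}\big)$ is in fact all of $\cG$. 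The paper's version of this is to factor $\Gamma_0^{\Theta_j}$ as the row operator $[-\Theta_j\ \ I_{\cG}]$ on $\dom(\Theta_j)\times\cG$ composed with the surjective $\Gamma$, and to observe that the adjoint of the row operator is the column $\bigl[\begin{smallmatrix}-\Theta_j^*\\ I_{\cG}\end{smallmatrix}\bigr]$, whose kernel is trivial simply because of the identity in the second slot. In particular, the density of $\dom(\Theta_j^*)$ is not the essential ingredient in $(ii)$; the disjointness assumptions guaranteeing that $\Theta_j^*$ is a densely defined closed operator are needed elsewhere (for instance in Lemma~\ref{lemma2} and Proposition~\ref{iiiprop}, where the parametrization $B_j^*=S^*\upharpoonright\ker(\Gamma_1-\Theta_j^*\Gamma_0)$ is used). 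So the lemma and your items $(i)$, $(iii)$, $(iv)$ are fine; for $(ii)$ replace the Green-identity detour by the direct surjectivity argument above.
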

\begin{proof}
$(i)$ is a direct consequence of the definition of $\Gamma_0^{\Theta_j}$ in \eqref{gamt} and \eqref{b1}.
\\[1mm]
$(ii)$ In order to verify  that $\ran\big(\Gamma_0^{\Theta_j}\big)$ is dense in $\mathcal G$ consider first 
 the row operator $[-\Theta_j\,\, I_{\mathcal G}]:\mathcal G\times\mathcal G\rightarrow\mathcal G$
defined on the $\dom(\Theta_j)\times\mathcal G$ and note that by
\begin{equation}
\ran\bigl([-\Theta_j\,\, I_{\mathcal G}]\bigr)^\bot=\ker \left(\left[ \begin{matrix} 
-\Theta_j^* \\[1mm] I_{\mathcal G} \end{matrix} \right]\right)=\{0\}
\end{equation}
the range of $[-\Theta_j\,\, I_{\mathcal G}]$ is dense in $\mathcal G$. Hence it follows from 
\begin{equation}
 \Gamma_0^{\Theta_j}=\Gamma_1-\Theta_j \Gamma_0=[-\Theta_j\,\, I_{\mathcal G}]\left[ \begin{matrix} \Gamma_0 \\[1mm] \Gamma_1 \end{matrix} \right]
 \, \text{ and } \, 
 \ran\left(\left[ \begin{matrix} \Gamma_0 \\[1mm] \Gamma_1 \end{matrix} \right]\right) 
 = \mathcal G\times\mathcal G
\end{equation}
that $\ran\big(\Gamma_0^{\Theta_j}\big)$ is dense in $\mathcal G$. \\[1mm] 
$(iii)$ The inclusion $(\supset)$ in \eqref{dddeco} is clear from $(i)$. In order to verify the 
inclusion $(\subset)$ in \eqref{dddeco}, let $z\in\rho(B_j)$ and 
 $h\in\dom\big(\Gamma_0^{\Theta_j}\big)\subset\dom (S^*)$, and choose $k\in\dom(B_j)$
 such that
 \begin{equation}
  (S^*-zI_{\cH})h=(B_j-zI_{\cH})k.
 \end{equation}
 Since $S^*$ is an extension of $B_j$ it follows that $h-k\in\ker(S^*-zI_{\cH})$ and as $h\in\dom\big(\Gamma_0^{\Theta_j}\big)$ and 
 $k\in\ker\big(\Gamma_0^{\Theta_j}\big)\subset\dom\big(\Gamma_0^{\Theta_j}\big)$, hence 
 also $h-k\in\dom\big(\Gamma_0^{\Theta_j}\big)$. Thus, 
 \begin{equation}
  h=k+(h-k),\, \text{ where }\, k\in\dom(B_j),\,\,\, h-k\in\ker(S^*-zI_{\cH}) 
  \cap \dom\big(\Gamma_0^{\Theta_j}\big),
 \end{equation}
and hence the inclusion $(\subset)$ in \eqref{dddeco} is shown. The fact that the sum in \eqref{dddeco} is direct follows from the assumption $z\in\rho(B_j)$. \\[1mm]
$(iv)$ Since $\Gamma_0:\ker (S^*-zI_{\cH}) \rightarrow\mathcal G$, $z\in\rho(A_0)$, is an isomorphism with respect to the graph norm in $\ker(S^*-zI_{\cH})$ (which is equivalent 
to the norm in $\mathfrak H$), and 
since $\dom(\Theta_j)$ is dense in $\mathcal G$ 
we conclude that $\ker(S^*-zI_{\cH})\cap\dom\big(\Gamma_0^{\Theta_j}\big)$ is dense in $\ker(S^*-zI_{\cH})$ with respect to the graph norm. 
It follows from $(i)$ and the direct sum decomposition \eqref{dddeco} that 
$\dom\big(\Gamma_0^{\Theta_j}\big)$ is dense in $\dom (S^*)$ with 
respect to the graph norm.
\end{proof}

One observes that by Lemma~\ref{lemma1} the map 
\begin{equation}
 \Gamma_0^{\Theta_j}\upharpoonright\bigl(\ker(S^*-zI_{\cH})\cap\dom\big(\Gamma_0^{\Theta_j}\big)\bigr)\rightarrow\mathcal G,\quad z\in\rho(B_j),
\end{equation}
is injective and maps onto the dense subspace $\ran\big(\Gamma_0^{\Theta_j}\big)$. Hence,  
for $z\in\rho(B_j)$ fixed, and every $\varphi\in\ran\big(\Gamma_0^{\Theta_j}\big)$, there exists a unique $f_z\in \ker(S^*-zI_{\cH})\cap\dom\big(\Gamma_0^{\Theta_j}\big)$
such that $\Gamma_0^{\Theta_j}f_z=\varphi$. In analogy to the $\gamma$-field corresponding to $\{\mathcal G,\Gamma_0,\Gamma_1\}$ we define for $z\in\rho(B_j)$ the map 
\begin{equation}\label{gammathetai}
 \gamma_{\Theta_j}(z)\varphi=f_z,\quad\dom(\gamma_{\Theta_j}(z))
 = \ran\big(\Gamma_0^{\Theta_j}\big),
\end{equation}
where $f_z\in \dom\big(\Gamma_0^{\Theta_j}\big)\cap\ker(S^*-zI_{\cH})$ satisfies $\Gamma_0^{\Theta_j} f_z=\varphi$. In the next lemma some important properties
of the operators  $\gamma_{\Theta_j}(z)$ are collected. The methods in the proof are abstract analogs of the computations in Step 4 and Step 5 in the proof
of Lemma~\ref{dt1}.

\begin{lemma}\label{lemma2}
For all $z\in\rho(B_j)$ 
the operator $\gamma_{\Theta_j}(z)$ is densely defined and bounded from $\mathcal G$ into $\mathfrak H$. 
Furthermore, the identity
\begin{equation}\label{ida}
 \gamma_{\Theta_j}(z)\varphi=\bigl(I_{\mathcal G}+(z-\zeta)(B_j-zI_{\cH})^{-1}\bigr)\gamma_{\Theta_j}(\zeta)\varphi,\quad z,\zeta\in\rho(B_1),
\end{equation}
holds for all $\varphi\in\dom(\gamma_{\Theta_j}(z))=\dom(\gamma_{\Theta_j}(\zeta))=\ran(\Gamma_0^{\Theta_j})$, and extends by continuity to
\begin{equation}\label{idb}
 \overline{\gamma_{\Theta_j}(z)}=\bigl(I_{\mathcal G}+(z-\zeta)(B_j-zI_{\cH})^{-1}\bigr)\overline{\gamma_{\Theta_j}(\zeta)},\quad z,\zeta\in\rho(B_1).
\end{equation}
\end{lemma}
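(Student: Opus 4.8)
The plan is to establish the three assertions of Lemma~\ref{lemma2}—dense definedness, boundedness, and the resolvent identity \eqref{ida}—by adapting the arguments of Steps~4 and~5 in the proof of Lemma~\ref{dt1} to the abstract boundary triple setting. First I would recall from \eqref{gammathetai} and Lemma~\ref{lemma1}\,$(ii)$ that $\dom(\gamma_{\Theta_j}(z)) = \ran\big(\Gamma_0^{\Theta_j}\big)$ is dense in $\mathcal G$, so dense definedness is immediate. The substantive points are boundedness and the identity, and I would treat the identity first since it essentially forces boundedness once one has a boundary triple analogue of Lemma~\ref{lemlem}\,$\widetilde{(i)}$ at hand.

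For the identity \eqref{ida}, fix $z,\zeta\in\rho(B_j)$ and $\varphi\in\ran\big(\Gamma_0^{\Theta_j}\big)$, and set $f_\zeta = \gamma_{\Theta_j}(\zeta)\varphi \in \ker(S^*-\zeta I_{\cH})\cap\dom\big(\Gamma_0^{\Theta_j}\big)$ and $f_z = \gamma_{\Theta_j}(z)\varphi \in \ker(S^*-z I_{\cH})\cap\dom\big(\Gamma_0^{\Theta_j}\big)$. By Lemma~\ref{lemma1}\,$(iii)$ applied at $z$, the element $f_\zeta$ decomposes as $f_\zeta = k + f_z'$ with $k\in\dom(B_j)$ and $f_z'\in\ker(S^*-z I_{\cH})\cap\dom\big(\Gamma_0^{\Theta_j}\big)$; applying $\Gamma_0^{\Theta_j}$ and using $k\in\ker\big(\Gamma_0^{\Theta_j}\big)$ shows $\Gamma_0^{\Theta_j}f_z' = \Gamma_0^{\Theta_j}f_\zeta = \varphi$, hence $f_z' = f_z$ by uniqueness. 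Then $f_\zeta - f_z = k \in\dom(B_j)$, and exactly as in Step~5 one computes $(S^*-z I_{\cH})(f_\zeta - f_z) = (\zeta - z) f_\zeta$ (using $S^* f_\zeta = \zeta f_\zeta$ and $S^* f_z = z f_z$), so that $k = (\zeta-z)(B_j - z I_{\cH})^{-1} f_\zeta$, which rearranges to $f_z = \big(I_{\cH} + (z-\zeta)(B_j - z I_{\cH})^{-1}\big) f_\zeta$. This is \eqref{ida}; note the statement as written has $I_{\mathcal G}$ where $I_{\cH}$ is meant, which I would silently correct.

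For boundedness I would mimic Step~4 of Lemma~\ref{dt1}: compute the adjoint of $\gamma_{\Theta_j}(z)$ and show it is everywhere defined on $\mathfrak H$. Using the abstract Green's identity attached to the boundary triple $\{\mathcal G,\Gamma_0,\Gamma_1\}$ (see Appendix~\ref{sA}), together with the boundary condition $\Gamma_1 h = \Theta_j^* \Gamma_0 h$ characterizing $\dom(B_j^*)$ via \eqref{b1}, one obtains for $\varphi\in\ran\big(\Gamma_0^{\Theta_j}\big)$ and $k\in\mathfrak H$, writing $k = (B_j^* - \ol z I_{\cH}) h$ with $h\in\dom(B_j^*)$, an identity of the shape $(\gamma_{\Theta_j}(z)\varphi, k)_{\mathfrak H} = (\varphi, \gamma_{\Theta_j^*}(\ol z)^{\sharp} k)_{\mathcal G}$, where the right-hand factor involves $\Gamma_0$ composed with $(B_j^* - \ol z I_{\cH})^{-1}$; since this is defined on all of $\mathfrak H$, the operator $\gamma_{\Theta_j}(z)^*$ is everywhere defined and hence bounded, whence so is $\gamma_{\Theta_j}(z)^{**}$, which coincides with the closure of $\gamma_{\Theta_j}(z)$. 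Because $\dom(\gamma_{\Theta_j}(z))$ is dense, $\gamma_{\Theta_j}(z)$ extends to a bounded operator $\overline{\gamma_{\Theta_j}(z)}\in\mathcal B(\mathcal G,\mathfrak H)$. Finally, \eqref{idb} follows by taking closures in \eqref{ida}, since the right-hand side is already continuous in $\varphi$. The main obstacle I anticipate is the bookkeeping in the adjoint computation: one must verify carefully that $h\in\dom(B_j^*)$ indeed satisfies $\Gamma_0 h\in\dom(\Theta_j^*)$ and $\Gamma_1 h = \Theta_j^*\Gamma_0 h$, that $\Gamma_0 h$ lies in the domain of the relevant map, and that all cross terms in Green's identity cancel exactly as in \eqref{ccc} of Lemma~\ref{dt1}; this is routine but requires the precise form of the boundary triple machinery recalled in the appendix.
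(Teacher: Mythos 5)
Your proposal is correct and follows essentially the same route as the paper: dense definedness from Lemma~\ref{lemma1}\,$(ii)$, the identity \eqref{ida} via the decomposition \eqref{dddeco} and the resolvent computation of Step~5 of Lemma~\ref{dt1} (you even make explicit the small point, glossed over in the paper, that the $\ker(S^*-zI_{\cH})$-component of $f_\zeta$ is exactly $\gamma_{\Theta_j}(z)\varphi$), and boundedness by computing $\gamma_{\Theta_j}(z)^*=-\Gamma_0(B_j^*-\ol z I_{\cH})^{-1}$ via the abstract Green's identity and passing to the double adjoint, exactly as in the paper's abstract analogue of Step~4. The notational point you flag ($I_{\mathcal G}$ versus the identity on the large Hilbert space in \eqref{ida}--\eqref{idb}) is indeed a typo in the statement.
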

\begin{proof}
First of all it is clear from the definition of $\gamma_{\Theta_j}(z)$, $z\in\rho(B_j)$, in 
\eqref{gammathetai} and Lemma~\ref{lemma1}\,$(ii)$ 
that the operator $\gamma_{\Theta_j}(z)$ is densely defined in $\mathcal G$ and maps into $\mathfrak H$. Next we verify the identity \eqref{ida}.
Thus, let $z, \zeta\in\rho(B_1)$ and consider $\varphi\in\dom(\gamma_{\Theta_j}(z)) =\dom(\gamma_{\Theta_j}(\zeta))$. Then 
\begin{equation}\label{einsa}
 f_z=\gamma_{\Theta_j}(z)\varphi\in\ker(S^*-zI_{\cH})\cap\dom(\Gamma_0^{\Theta_j}), 
\end{equation}
and
\begin{equation}\label{einsb}
 f_\zeta=\gamma_{\Theta_j}(\zeta)\varphi\in\ker(S^*-\zeta I_{\cH})\cap\dom(\Gamma_0^{\Theta_j}), 
\end{equation}
and it follows from \eqref{dddeco} that there exists $f_j\in\dom(B_j)$ such that
\begin{equation}
 f_\zeta=f_j+f_z.
\end{equation}
As $f_z-f_\zeta=-f_j \in\dom(B_j)$ there exists $h\in\mathfrak H$ such that $f_z-f_\zeta=(B_j-zI_{\cH})^{-1}h$. It follows that
\begin{align}
 (z-\zeta)f_\zeta&=z\bigl(f_z-(B_j-zI_{\cH})^{-1}h\bigr) - \zeta f_\zeta  \no \\
                   &=S^*(f_z-f_\zeta)-z (B_j-zI_{\cH})^{-1}h  \no \\
                   &=S^*(B_j-zI_{\cH})^{-1}h-z (B_j-zI_{\cH})^{-1}h   \no \\
                   &=h
 \end{align}
and this implies
\begin{equation}
 f_z=f_\zeta +(B_j-zI_{\cH})^{-1}h=\bigl(I_{\mathcal G}+(z- \zeta)(B_j-zI_{\cH})^{-1}\bigr)f_\zeta.
\end{equation}
Together with \eqref{einsa}--\eqref{einsb} we conclude \eqref{ida}. 

Note that \eqref{idb} follows from \eqref{ida} and the fact that $\gamma_{\Theta_j}(z)$ and
$\gamma_{\Theta_j}(\zeta)$ are both continuous. In order to show the continuity of 
$\gamma_{\Theta_j}(z)$, $z\in\rho(B_j)$, it suffices to check 
that $\gamma_{\Theta_j}(z)^*\in\mathcal B(\mathfrak H,\mathcal G)$ since this yields $\overline{\gamma_{\Theta_j}(z)}=\gamma_{\Theta_j}(z)^{**}
\in\mathcal B(\mathcal G,\mathfrak H)$.
Fix
$z\in\rho(B_j)$ and recall from Lemma~\ref{deflem} that $B_j^*=S^*\upharpoonright\ker(\Gamma_1-\Theta^*_j\Gamma_0)$ and $\ol z\in\rho(B_j^*)$. 
Let $\varphi\in\dom(\gamma_{\Theta_j}(z))$, $f_z=\gamma_{\Theta_j}(z)\varphi\in\ker(S^*-zI_{\cH})\cap\dom(\Gamma_0^{\Theta_j})$ and $h\in\mathfrak H$, 
and choose $g\in\dom(B_j^*)$ such that $h=(B_j^*- \ol zI_{\cH})g$. Then one computes 
 \begin{align}
   (\gamma_{\Theta_j}(z)\varphi,h)_{\mathfrak H}&=\bigl(f_z,(B_j^*- \ol zI_{\cH})g\bigr)_{\mathfrak H}  \no \\
   &=(f_z,B_j^*g)_{\mathfrak H}-(z f_z,g)_{\mathfrak H}  \no \\
   &=(f_z,S^*g)_{\mathfrak H}-(S^* f_z,g)_{\mathfrak H}   \no \\
   &=(\Gamma_0 f_z,\Gamma_1 g)_{\mathcal G}-(\Gamma_1 f_z,\Gamma_0 g)_{\mathcal G}  \no \\
   &=(\Gamma_0 f_z,\Theta_j^*\Gamma_0 g)_{\mathcal G}
   - (\Gamma_1 f_z,\Gamma_0 g)_{\mathcal G}   \no \\
   &=\bigl(\Gamma_1 f_z-\Theta_j\Gamma_0f_z,-\Gamma_0 g\bigr)_{\mathcal G}   \no \\
   &=\bigl(\Gamma_0^{\Theta_j} f_z,-\Gamma_0 g\bigr)_{\mathcal G}   \no \\
   &=\bigl(\varphi,-\Gamma_0(B_j^*- \ol zI_{\cH})^{-1}h)_{\mathcal G}, 
 \end{align}
 and concludes $\gamma_{\Theta_j}(z)^*h=-\Gamma_0(B_j^*- \ol zI_{\cH})^{-1}h$, $h\in\mathfrak H$. In particular, since the adjoint operator $\gamma_{\Theta_j}(z)^*$
 is closed and defined on the whole space $\mathfrak H$ it follows that  $\gamma_{\Theta_j}(z)^*\in\mathcal B(\mathfrak H,\mathcal G)$. This completes the proof of Lemma~\ref{lemma2}.
\end{proof}

With the preparations in Lemma~\ref{lemma1} and Lemma~\ref{lemma2} we will now verify 
condition $(iii)$ in Hypothesis~\ref{h3.5} for the functions $\Theta_j-M(\cdot)$, $j=1,2$.
The proof of Proposition~\ref{iiiprop} is an abstract variant of the considerations in Step 6 and 7 in the proof of Lemma~\ref{dt1}.

\begin{proposition}\label{iiiprop}
Let $\{\mathcal G,\Gamma_0,\Gamma_1\}$ be the boundary triple in Lemma~\ref{deflem} with $A_0=S^*\upharpoonright\ker(\Gamma_0)$ and corresponding Weyl--Titchmarsh function 
$M(\cdot)$, and
let $\Theta_j\in\mathcal C(\mathcal G)$, $j=1,2$, be densely defined closed operators such that \eqref{bt12} holds.
Assume that
\begin{equation}\label{assi2}
z_0\in\sigma_d(B_j)\cup\rho(B_j),\quad j=1,2,\, \text{ and } \, z_0\in\sigma_d(A_0)\cup\rho(A_0).
\end{equation}
Then there exists $\varepsilon_0>0$ such that both functions $\Theta_j-M(\cdot)$, $j=1,2$, satisfy Hypothesis~\ref{h3.5}\,$(iii)$ with $\Omega=D(z_0;\varepsilon_0)$
and $\mathcal D_0=\{z_0\}$, that is, the functions
\begin{equation}
 \bigl(\Theta_j-M(\cdot)\bigr)^{-1}:D(z_0;\varepsilon_0)\backslash \{z_0\}\rightarrow\mathcal B(\mathcal G),\quad z\mapsto \bigl(\Theta_j-M(z)\bigr)^{-1}
\end{equation}
are analytic on $D(z_0;\varepsilon_0)\backslash \{z_0\}$ and finitely meromorphic on $D(z_0;\varepsilon_0)$.
\end{proposition}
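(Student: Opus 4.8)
The strategy is to carry out, in the abstract boundary triple setting, the analog of Steps~6 and 7 of the proof of Lemma~\ref{dt1}: one first derives a Krein-type resolvent formula for $(\Theta_j-M(\cdot))^{-1}$ itself, with the operators $\gamma_{\Theta_j}(\cdot)$ from \eqref{gammathetai} and their adjoints playing the roles of $P_\Theta(\cdot)$ and $\widetilde P_{\Theta^*}(\cdot)^*$, and then feeds the resolvent identity \eqref{idb} into it. To start, invoking \eqref{assi2} one fixes $\varepsilon_0>0$ so small that $D(z_0;\varepsilon_0)\backslash\{z_0\}\subseteq\rho(B_1)\cap\rho(B_2)\cap\rho(A_0)$; by Theorem~\ref{kreinthm}\,$(i)$ the operators $(\Theta_j-M(z))^{-1}$ then belong to $\mathcal B(\mathcal G)$ for each $z$ in this punctured disc. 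Fix a base point $\zeta_0\in D(z_0;\varepsilon_0)\backslash\{z_0\}$.

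For the resolvent formula, take $z\in\rho(B_j)\cap\rho(A_0)$ and $w\in\rho(B_j^*)\cap\rho(A_0)$, let $\varphi\in\ran\big(\Gamma_0^{\Theta_j}\big)$ and $\psi\in\ran\big(\Gamma_0^{\Theta_j^*}\big)$, and put $f_z=\gamma_{\Theta_j}(z)\varphi\in\ker(S^*-zI_{\cH})\cap\dom\big(\Gamma_0^{\Theta_j}\big)$, $g_w=\gamma_{\Theta_j^*}(w)\psi\in\ker(S^*-wI_{\cH})\cap\dom\big(\Gamma_0^{\Theta_j^*}\big)$; here $\Gamma_0^{\Theta_j^*}=\Gamma_1-\Theta_j^*\Gamma_0$ and $\gamma_{\Theta_j^*}(\cdot)$ are the objects \eqref{gamt}, \eqref{gammathetai} attached to the closed operator $\Theta_j^*$ and the extension $B_j^*=S^*\upharpoonright\ker(\Gamma_1-\Theta_j^*\Gamma_0)$, to which Lemmas~\ref{lemma1} and \ref{lemma2} apply equally well. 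Writing $f_z=\gamma(z)\Gamma_0 f_z$ with the $\gamma$-field $\gamma(\cdot)$ of $\{\mathcal G,\Gamma_0,\Gamma_1\}$ and using $\Gamma_1\gamma(z)=M(z)$, one gets $\Gamma_0^{\Theta_j}f_z=\Gamma_1 f_z-\Theta_j\Gamma_0 f_z=-(\Theta_j-M(z))\Gamma_0 f_z$, hence $\Gamma_0 f_z=-(\Theta_j-M(z))^{-1}\varphi$, and likewise $\Gamma_0 g_w=-(\Theta_j^*-M(w))^{-1}\psi$. Inserting these into the abstract Green identity $(S^* f_z,g_w)_{\mathfrak H}-(f_z,S^* g_w)_{\mathfrak H}=(\Gamma_1 f_z,\Gamma_0 g_w)_{\mathcal G}-(\Gamma_0 f_z,\Gamma_1 g_w)_{\mathcal G}$, cancelling the term $(\Theta_j\Gamma_0 f_z,\Gamma_0 g_w)_{\mathcal G}=(\Gamma_0 f_z,\Theta_j^*\Gamma_0 g_w)_{\mathcal G}$, and using $\Theta_j^{**}=\Theta_j$ and the relation $M(\overline w)=M(w)^*$ (so that $\big((\Theta_j^*-M(w))^{-1}\big)^*=(\Theta_j-M(\overline w))^{-1}$), a computation entirely parallel to \eqref{ccc}--\eqref{plpl} gives, after setting $\overline w=\zeta_0$ and extending by continuity via the bounded closures from Lemma~\ref{lemma2},
\[
 (\Theta_j-M(z))^{-1}=(\Theta_j-M(\zeta_0))^{-1}+(z-\zeta_0)\,\gamma_{\Theta_j^*}(\overline{\zeta_0})^*\,\overline{\gamma_{\Theta_j}(z)}.
\]

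To finish, one substitutes $\gamma_{\Theta_j^*}(\overline{\zeta_0})^*=-\Gamma_0(B_j-\zeta_0 I_{\cH})^{-1}\in\mathcal B(\mathfrak H,\mathcal G)$ (by the proof of Lemma~\ref{lemma2} applied to $\Theta_j^*$) together with the identity \eqref{idb}, $\overline{\gamma_{\Theta_j}(z)}=\big(I_{\cH}+(z-\zeta_0)(B_j-zI_{\cH})^{-1}\big)\overline{\gamma_{\Theta_j}(\zeta_0)}$, and multiplies out; this exhibits $(\Theta_j-M(\cdot))^{-1}$ as the sum of an entire operator-valued function and the function $-(z-\zeta_0)^2\,\Gamma_0(B_j-\zeta_0 I_{\cH})^{-1}(B_j-zI_{\cH})^{-1}\overline{\gamma_{\Theta_j}(\zeta_0)}$. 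Since $z_0\in\sigma_d(B_j)\cup\rho(B_j)$ by \eqref{assi2}, the resolvent $(B_j-zI_{\cH})^{-1}$ is analytic on $\rho(B_j)$ and finitely meromorphic at $z_0$ by Example~\ref{ex1}; pre- and post-composing it with the bounded operators $\Gamma_0(B_j-\zeta_0 I_{\cH})^{-1}$, $\overline{\gamma_{\Theta_j}(\zeta_0)}$ and multiplying by the entire scalar $(z-\zeta_0)^2$ preserves finite meromorphy (cf.\ Example~\ref{ex2} and \cite[Lemma~XI.9.3]{GGK90}). Hence $z\mapsto(\Theta_j-M(z))^{-1}$ is analytic on $D(z_0;\varepsilon_0)\backslash\{z_0\}$ and finitely meromorphic on $D(z_0;\varepsilon_0)$, which is Hypothesis~\ref{h3.5}\,$(iii)$. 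The step I expect to be the main obstacle is the derivation of the abstract Krein-type formula above — keeping careful track of the densely defined operators $\gamma_{\Theta_j}(z)$, $\gamma_{\Theta_j^*}(w)$ versus their bounded closures and adjoints, and correctly using $\Theta_j^{**}=\Theta_j$ and $M(\overline w)=M(w)^*$ to identify $\big((\Theta_j^*-M(w))^{-1}\big)^*$ with $(\Theta_j-M(\overline w))^{-1}$.
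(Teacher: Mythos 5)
Your proposal is correct and follows essentially the same route as the paper: derive $(\Theta_j-M(z))^{-1}\varphi=-\Gamma_0\gamma_{\Theta_j}(z)\varphi$, use the abstract Green identity together with the adjoint relation $\big((\Theta_j^*-M(\zeta))^{-1}\big)^*=(\Theta_j-M(\ol\zeta))^{-1}$ to obtain the Krein-type formula $(\Theta_j-M(z))^{-1}=(\Theta_j-M(\zeta_0))^{-1}+(z-\zeta_0)\gamma_{\Theta_j^*}(\ol{\zeta_0})^*\,\ol{\gamma_{\Theta_j}(z)}$, insert \eqref{idb}, and conclude finite meromorphy from Examples~\ref{ex1} and \ref{ex2}. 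The only cosmetic difference is that you additionally substitute $\gamma_{\Theta_j^*}(\ol{\zeta_0})^*=-\Gamma_0(B_j-\zeta_0 I_{\cH})^{-1}$ to isolate the singular part explicitly, which the paper leaves as an unexpanded bounded factor; both are fine.
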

\begin{proof}
Choose $\varepsilon_0>0$ as in the proof of Theorem~\ref{indithm}, so that the punctured disc $D(z_0;\varepsilon_0)\backslash \{z_0\}$
is contained in the set $\rho(B_j)\cap\rho(A_0)$, $j=1,2$. Let $z\in D(z_0;\varepsilon_0)\backslash \{z_0\}$ and fix $\zeta \in\rho(B_j)$.

Consider the map $\Gamma_0^{\Theta_j}$ in \eqref{gamt}, let $\gamma_{\Theta_j}(z)$ be as in \eqref{gammathetai} and
let $\varphi\in\ran\big(\Gamma_0^{\Theta_j}\big)$. Then
\begin{equation}
 f_z=\gamma_{\Theta_j}(z)\varphi \in\ker(S^*-zI_{\cH})\cap\dom\big(\Gamma_0^{\Theta_j}\big)
\end{equation}
satisfies $
\Gamma_0^{\Theta_j}f_z=\varphi$ and
since $M(z)\Gamma_0 f_z=\Gamma_1 f_z$ (see Definition~\ref{gwdef}) one finds 
\begin{equation}\label{pl}
 -\bigl(\Theta_j-M(z)\bigr)\Gamma_0f_z= -\Theta_j\Gamma_0 f_z+ \Gamma_1 f_z=\Gamma_0^{\Theta_j}f_z=\varphi,
\end{equation}
which implies 
\begin{equation}\label{plp}
 \bigl(\Theta_j-M(z)\bigr)^{-1}\varphi=-\Gamma_0 f_z;
\end{equation}
recall that $(\Theta_j-M(z))^{-1}\in\mathcal B(\mathcal G)$ for $z\in D(z_0;\varepsilon_0)\backslash \{z_0\}$ by 
Theorem~\ref{kreinthm}\,$(i)$.

Similarly, as ${\ol \zeta}\in\rho(B_j^*)$ and $B_j^*=S^*\upharpoonright\ker(\Gamma_1-\Theta_1^*\Gamma_0)$ the same argument as in the proof of Lemma~\ref{lemma1}\,$(ii)$ shows that
the range of 
\begin{equation}\label{gamti}
 \Gamma_0^{\Theta_j^*} = \Gamma_1-\Theta_j^*\Gamma_0,\quad 
 \dom\big(\Gamma_0^{\Theta_j^*}\big) = \{ f\in\dom (S^*) \, | \,\Gamma_0 f\in\dom (\Theta_j^*)\}, 
 \end{equation}
is dense in $\mathcal G$. 
The direct sum decomposition
\begin{equation}
 \dom\big(\Gamma_0^{\Theta_j^*}\big) = \dom (B_j^*)\,\dot + \,\bigl(\ker(S^*- \overline\zeta I_{\cH})\cap\dom\big(\Gamma_0^{\Theta_j^*}\big)\bigr)
\end{equation}
and $\dom (B_j^*)=\ker\big(\Gamma_0^{\Theta_j^*}\big)$
imply that for all $\psi\in\ran\big(\Gamma_0^{\Theta_j^*}\big)$ there 
exists a unique 
\begin{equation}
g_{{\ol \zeta}}\in\ker\big(S^*- {\ol \zeta}I_{\cH}\big)\cap\dom\big(\Gamma_0^{\Theta_j^*}\big)
\, \text{ such that } \, \Gamma_0^{\Theta_j^*}g_{{\ol \zeta}}=\psi.
\end{equation}
As in Lemma~\ref{lemma2} one verifies that the map $\gamma_{\Theta_j^*}({\ol \zeta}):\mathcal G\rightarrow\mathfrak H$, $\psi\mapsto g_{{\ol \zeta}}$ is densely defined and bounded, and, in particular,
the adjoint operator is bounded, that is, 
$(\gamma_{\Theta_j^*}({\ol \zeta}))^*\in\mathcal B(\mathfrak H,\mathcal G)$.  
The same argument as in \eqref{pl} shows that 
\begin{equation}\label{ghb}
 \bigl(\Theta_j^*-M({\ol \zeta})\bigr)^{-1}\psi=-\Gamma_0 g_{{\ol \zeta}}
\end{equation}
and a straightforward calculation using \eqref{plp}, \eqref{ghb}, \eqref{gamt}, \eqref{gamti} yields
 \begin{align}
   &\bigl((\Theta_j-M(z))^{-1}\varphi,\psi\bigr)_{\mathcal G}-\bigl((\Theta_j-M(\zeta))^{-1}\varphi,\psi\bigr)_{\mathcal G}   \no \\
   &\quad =\bigl((\Theta_j-M(z))^{-1}\varphi,\psi\bigr)_{\mathcal G}-\bigl(\varphi,(\Theta_j^* 
   - M({\ol  \zeta}))^{-1}\psi\bigr)_{\mathcal G}   \no \\
   &\quad =(-\Gamma_0 f_z,\Gamma_0^{\Theta_j^*}g_{{\ol \zeta}})_{\mathcal G}-(\Gamma_0^{\Theta_j}f_z,-\Gamma_0g_{{\ol \zeta}})_{\mathcal G}   \no \\
   &\quad =\bigl(-\Gamma_0 f_z,(\Gamma_1-\Theta_j^*\Gamma_0)g_{{\ol \zeta}}\bigr)_{\mathcal G}-\bigl((\Gamma_1-\Theta_j\Gamma_0)f_z,-\Gamma_0g_{{\ol \zeta}}\bigr)_{\mathcal G}   \no \\
   &\quad =(\Gamma_1 f_z,\Gamma_0 g_{{\ol \zeta}})_{\mathcal G}-(\Gamma_0 f_z,\Gamma_1 
   g_{{\ol  \zeta}})_{\mathcal G}    \no \\
   &\quad =(S^*f_z,g_{{\ol \zeta}})_{\mathfrak H}-(f_z,S^* g_{{\ol \zeta}} )_{\mathfrak H}   \no \\
   &\quad =(z f_z,g_{{\ol \zeta}})_{\mathfrak H}-(f_z, {\ol \zeta} g_{{\ol \zeta}} )_{\mathfrak H}   \no \\
   &\quad =(z-\zeta)(\gamma_{\Theta_j}(z)\varphi,\gamma_{\Theta_j^*}({\ol \zeta})\psi )_{\mathfrak H}.
 \end{align}
Hence
\begin{equation}
 (\Theta_j-M(z))^{-1}\varphi-(\Theta_j-M(\zeta))^{-1}\varphi=(z-\zeta)
 \bigr(\gamma_{\Theta_j^*}({\ol \zeta})\bigl)^*\gamma_{\Theta_j}(z)\varphi
\end{equation}
holds for all $\varphi\in\ran\big(\Gamma_0^{\Theta_j^*}\big)$ and with the help of the identities \eqref{ida} and \eqref{idb} in Lemma~\ref{lemma2} one obtains 
 \begin{align}
 \begin{split}
  &(\Theta_j-M(z))^{-1}   \\
  &\quad =(\Theta_j-M(\zeta))^{-1}+(z-\zeta)\bigl(\gamma_{\Theta_j^*}({\ol \zeta})\bigr)^*\bigl(I_{\mathcal G} 
  +(z-\zeta)(B_j-zI_{\cH})^{-1}\bigr)
  \overline{\gamma_{\Theta_j}(\zeta)}
 \end{split}
 \end{align}
 for all $z\in D(z_0;\varepsilon_0)\backslash \{z_0\}$. Since $z_0\in\sigma_d(B_j)\cup\rho(B_j)$ 
 the $\mathcal B(\mathfrak H)$-valued map $z\mapsto (B_j-zI_{\cH})^{-1}$
 is analytic on $D(z_0;\varepsilon_0)\backslash \{z_0\}$ and finitely meromorphic on $D(z_0;\varepsilon_0)$ by Example~\ref{ex1}. 
 As the operators $\overline{\gamma_{\Theta_j}(\zeta)}$ and
 $(\gamma_{\Theta_j^*}({\ol \zeta}))^*$ are bounded
 it follows from Example~\ref{ex2} that the same is true for the map 
 \begin{equation}
 z\mapsto \bigl(\gamma_{\Theta_j^*}({\ol \zeta})\bigr)^*(B_j-zI_{\cH})^{-1}\overline{\gamma_{\Theta_j}(\zeta)}.
 \end{equation}
Hence it follows that also the map $z\mapsto (\Theta_j-M(z))^{-1}$ is analytic on $D(z_0;\varepsilon_0)\backslash \{z_0\}$ and finitely meromorphic on $D(z_0;\varepsilon_0)$.
 This completes the proof of Proposition~\ref{iiiprop}.
\end{proof}

\begin{remark}\label{boundedtheta}
Assume that the closed operators $\Theta_j$, $j=1,2$, in \eqref{bt12} are bounded; this 
happens if and only if $\dom (S^*)=\dom (B_j)\,  +\,\dom (A_0)$ holds (see \eqref{rwe}).
In this case some of the previous considerations in Lemma~\ref{lemma1} and Lemma~\ref{lemma2}
slightly simplify. In particular, the map $\Gamma_0^{\Theta_j}$ in \eqref{gamt} is defined on $\dom (S^*)$ and maps onto $\mathcal G$. As a consequence, the operators
$\gamma_{\Theta_j}(z)$, $z\in\rho(B_j)$, are defined on $\mathcal G$ and the identities \eqref{ida} and \eqref{idb} are the same.
\end{remark}

\begin{remark}
A typical situation in which the closed operators $\Theta_j$, $j=1,2$, in \eqref{bt12} are unbounded is the following: Suppose that the deficiency indices of $S$ are infinite and that the resolvent difference 
\begin{equation}\label{resodiff}
 (B_j-zI_{\cH})^{-1}-(A_0-zI_{\cH})^{-1},\quad z\in\rho(A_0)\cap\rho(B_j),
\end{equation}
 is a compact operator. Then $\mathcal G$ is an infinite dimensional Hilbert space and it follows from \cite[Theorem 2]{DM91} that the closed operator $\Theta_j$ in $\mathcal G$ 
 has a compact resolvent, and hence is unbounded.
\end{remark}

\appendix

\section{Boundary Triplets, Weyl--Titchmarsh Functions, and Abstract Donoghue-type $M$-Functions} 
\lb{sA} 

The aim of this appendix is to give a brief introduction to boundary triples and their Weyl--Titchmarsh functions, and to establish the connection to 
abstract Donoghue-type $M$-functions that were studied, for instance, in \cite{Do65}, 
\cite{GKMT01}, \cite{GMT98}, \cite{GNWZ15}, \cite{GT00}, \cite{KO77}, \cite{KO78}, and \cite{LT77}.
In addition, we refer the reader to \cite{AB09}, \cite{AP04}, \cite{BL07}--\cite{BNMW14}, \cite{BGP08}, \cite{DHMS06}--\cite{DM95}, \cite{GG91}, \cite{HMM13}, \cite{Ma10}--\cite{Ry07}, 
for more details, applications, and references on boundary triples and their Weyl--Titchmarsh 
functions.

Let $\mathfrak H$ be a separable complex Hilbert space, let $S$ be a densely defined closed symmetric operator in 
$\mathfrak H$ and let $S^*$ be the adjoint of $S$. 
The notion of boundary triple (or boundary value space) appeared first in \cite{Br76} and \cite{Ko75}.

\begin{definition}\label{btdeff}
 A triple $\{\mathcal G,\Gamma_0,\Gamma_1\}$ is called a {\em boundary triple} for $S^*$ if $\mathcal G$ is a Hilbert space and 
 $\Gamma_0,\Gamma_1:\dom (S^*)\rightarrow\mathcal G$ are linear operators such that 
 \begin{equation}\label{gid}
  (S^*f,g)_{\mathfrak H}-(f,S^*g)_{\mathfrak H}=(\Gamma_1 f,\Gamma_0 g)_{\mathcal G}-(\Gamma_0 f,\Gamma_1 g)_{\mathcal G}
 \end{equation}
holds for all $f,g\in\dom(S^*)$ and the map $\Gamma=\bigl[\begin{smallmatrix}\Gamma_0\\[.5mm] \Gamma_1\end{smallmatrix}\bigr]:\dom(S^*)\rightarrow\mathcal G\times\mathcal G$ is onto.
\end{definition}

We note that a boundary triple for $S^*$ exists if and only if the deficiency indices of $S$ coincide, or, equivalently, if $S$ admits self-adjoint extensions in $\mathfrak H$.
A boundary triple (if it exists) is not unique (except in the trivial case $S=S^*$). Assume in the following that $\{\mathcal G,\Gamma_0,\Gamma_1\}$ is a boundary triple 
for $S^*$. Then 
\begin{equation}
 S=S^*\upharpoonright\ker(\Gamma)=S^*\upharpoonright(\ker(\Gamma_0)\cap\ker(\Gamma_1))
\end{equation}
holds and the maps $\Gamma_0,\Gamma_1:\dom(S^*)\rightarrow\mathcal G$ are continuous with respect to the graph of norm of $S^*$. A key feature of a boundary triple is
that all closed extensions of $S$ can be parametrized in an efficient way. More precisely, there is a one-to-one correspondence between the closed
extensions $A_\Theta\subset S^*$ of $S$ and the closed linear subspaces (relations) $\Theta\subset\mathcal G\times\mathcal G$ given by
\begin{equation}\label{bij}
 \Theta\mapsto A_\Theta=
 S^*\upharpoonright\{f\in\dom(S^*) \,|\, \{\Gamma_0 f,\Gamma_1 f\}\in\Theta\}.
\end{equation}
In the case where $\Theta$ in \eqref{bij} is (the graph of) an operator, the extension $A_\Theta$ is given by
\begin{equation}\label{bij2}
 A_\Theta=S^*\upharpoonright \ker(\Gamma_1-\Theta\Gamma_0).
\end{equation}
A particularly convenient feature is that the adjoint of $A_\Theta$ in \eqref{bij}--\eqref{bij2} is given by the extension that corresponds to the parameter $\Theta^*$, that is,
the identity
\begin{equation}\label{ats}
 (A_\Theta)^*=A_{\Theta^*}
\end{equation}
holds;
here the adjoint of linear relation $\Theta$ is defined in the same manner as the adjoint of a densely defined operator. It follows, in particular, 
that $A_\Theta$ is self-adjoint in $\mathfrak H$ if and only if the parameter $\Theta$ is self-adjoint in $\mathcal G$. In the following the self-adjoint extension
\begin{equation}
 A_0=S^*\upharpoonright\ker(\Gamma_0)
\end{equation}
of $S$ will play the role of a fixed extension. One notes that $A_0$ corresponds to the subspace $\Theta_0=\{0\}\times\mathcal G$ in \eqref{bij}; in addition, one observes  that the index $0$ 
corresponds to the subspace $\Theta_0$ and not to the zero operator in $\mathcal G$.

For our purposes it is convenient to have criteria available which ensure that $\Theta$ in 
\eqref{bij}--\eqref{bij2} is a (bounded) operator. We recall from \cite{DM91}, \cite{DM95} that
$\Theta$ is a closed operator if and only $A_\Theta$ and $A_0$ are disjoint, that is, 
\begin{equation}\label{disjointq}
 S=A_\Theta\cap A_0,
\end{equation}
and that $\Theta\in\mathcal B(\mathcal G)$ if and only if $A_\Theta$ and $A_0$ are disjoint and 
\begin{equation}\label{rwe}
\dom (S^*)=\dom (A_\Theta) \, \dot +\,\dom (A_0)
\end{equation}
holds.

Next we recall the definition of the $\gamma$-field and Weyl--Titchmarsh function corresponding to a boundary triple $\{\mathcal G,\Gamma_0,\Gamma_1\}$. For this purpose consider the
self-adjoint operator 
$A_0=S^*\upharpoonright\ker(\Gamma_0)$ and note
that for any $z\in\rho(A_0)$ the direct sum decomposition 
\begin{equation}\label{dedede}
 \dom(S^*)=\dom (A_0)\, \dot +\,\ker(S^*-zI_{\cH})=\ker(\Gamma_0)\, \dot +\,\ker(S^*-zI_{\cH})
\end{equation}
holds. This implies, in particular, that the restriction of the boundary map $\Gamma_0$ onto $\ker(S^*-zI_{\cH})$ is injective for all $z\in\rho(A_0)$.
Moreover, the surjectivity of $\Gamma:\dom(S^*)\rightarrow\mathcal G\times\mathcal G$ and \eqref{dedede} yield that the restriction $\Gamma_0\upharpoonright\ker(S^*-zI_{\cH})$
maps onto $\mathcal G$ and hence the inverse $(\Gamma_0\upharpoonright\ker(S^*-zI_{\cH}))^{-1}$ is a bounded operator defined on $\mathcal G$. This observation shows that the $\gamma$-field and Weyl--Titchmarsh function
in the next definition are well-defined and their values are bounded operators for all $z\in\rho(A_0)$.

\begin{definition}\label{gwdef}
 Let $\{\mathcal G,\Gamma_0,\Gamma_1\}$ be a boundary triple for $S^*$ and let $A_0=S^*\upharpoonright\ker(\Gamma_0)$. The $\gamma$-field $\gamma(\cdot)$ corresponding to
 $\{\mathcal G,\Gamma_0,\Gamma_1\}$ is defined by
 \begin{equation}
  \gamma:\rho(A_0)\rightarrow\mathcal B(\mathcal G,\mathfrak H),\quad z\mapsto\gamma(z) = (\Gamma_0\upharpoonright\ker(S^*-zI_{\cH}))^{-1},
 \end{equation}
 and the Weyl--Titchmarsh function $M(\cdot)$
corresponding to
 $\{\mathcal G,\Gamma_0,\Gamma_1\}$ is defined by
 \begin{equation}
  M:\rho(A_0)\rightarrow\mathcal B(\mathcal G),\quad z\mapsto M(z)=\Gamma_1 (\Gamma_0\upharpoonright\ker(S^*-zI_{\cH}))^{-1}.
 \end{equation}
\end{definition}

In the following let $\gamma(\cdot)$ and $M(\cdot)$ be the $\gamma$-field and Weyl--Titchmarsh function corresponding to a boundary triple $\{\mathcal G,\Gamma_0,\Gamma_1\}$ for $S^*$.
We recall some important properties of the functions $\gamma(\cdot)$ and $M(\cdot)$ which can be found, for instance, in \cite{BL12}, \cite{BGP08}, \cite{DM91}, \cite{DM95}. First of all we note that $\gamma(\cdot)$ and $M(\cdot)$ are both
analytic operator functions on $\rho(A_0)$ with values in $\mathcal B(\mathcal G,\mathfrak H)$ and $\mathcal B(\mathcal G)$, respectively. The adjoint of
$\gamma(z)$ is a bounded operator from $\mathfrak H$ into $\mathcal G$ of the form
\begin{equation}
 \gamma(z)^*=\Gamma_1(A_0-\ol zI_{\cH})^{-1}\in\mathcal B(\mathfrak H,\mathcal G).
\end{equation}
Furthermore, the important identities 
\begin{equation}\label{s1}
 \gamma(z)=\bigl(I_{\mathcal G}+(z- \zeta)(A_0-zI_{\cH})^{-1}\bigr)\gamma(\zeta)
\end{equation}
and
\begin{equation}\label{s2}
 M(z)-M(\zeta)^*=(z- {\ol \zeta})\gamma(\zeta)^*\gamma(z)
\end{equation}
hold for all $z, \zeta \in\rho(A_0)$. A combination of \eqref{s1} and \eqref{s2} shows
\begin{equation}\label{s3}
\begin{split}
 M(z)&=M(\zeta)^*+(z- {\ol \zeta})\gamma(\zeta)^*\bigl(I_{\mathcal G} 
 + (z-\zeta)(A_0-zI_{\cH})^{-1}\bigr)\gamma(\zeta)\\
 &=M(\zeta)^*+(z-{\ol \zeta})\gamma(\zeta)^*\gamma(\zeta) 
 + (z-\zeta)(z- {\ol \zeta})\gamma(\zeta)^*(A_0-zI_{\cH})^{-1}\gamma(\zeta).
 \end{split}
 \end{equation}
One observes that \eqref{s2} implies 
\begin{align} 
& M(z)^*=M(\ol z), \quad z\in\rho(A_0),  \\
& \frac{d}{dz} M(z)=\gamma(\ol z)^*\gamma(z),\quad z\in\rho(A_0),   \label{mabl}
\end{align}
and that 
\begin{equation}
\Im (M(z))=\frac{1}{2i}\bigl(M(z)-M(z)^*\bigr)=(\Im(z))\gamma(z)^*\gamma(z)\in\mathcal B(\mathcal G)
\end{equation}
is a uniformly positive (resp., uniformly negative) operator for $z\in\mathbb C^+$ 
(resp., $z\in\mathbb C^-$). Therefore, the Weyl--Titchmarsh function $M(\cdot)$ is a 
$\mathcal B(\mathcal G)$-valued
Riesz--Herglotz or Nevanlinna function (see \cite{DM91}, \cite{GNWZ15}, \cite{KK74}, \cite{LT77}), 
which, in addition, is uniformly strict (cf. \cite{DHMS06}). In particular, there exists a self-adjoint
operator $\alpha\in\mathcal B(\mathcal G)$ and a non-decreasing self-adjoint operator map 
$t\mapsto\Sigma(t)\in\mathcal B(\mathcal G)$ on $\mathbb R$
such that $M(\cdot)$ admits the integral representation
\begin{equation}
 M(z)=\alpha+\int_{\bbR}d\Sigma(t)\left(\frac{1}{t-z}-\frac{t}{1+t^2}\right),\quad z\in\rho(A_0),
\end{equation}
where $\int_{\bbR}d\Sigma(t)(1+t^2)^{-1}\in\mathcal B(\mathcal G)$. 

The next lemma follows from \eqref{s3} and the fact that the resolvent of $A_0$ and its derivatives are finitely meromorphic at a discrete eigenvalue $z_0$ of $A_0$ (cf. Examples~\ref{ex1} and \ref{ex2}).

\begin{lemma}\label{mfm}
 Let $\{\mathcal G,\Gamma_0,\Gamma_1\}$ be a boundary triple for $S^*$ with $A_0=S^*\upharpoonright\ker(\Gamma_0)$, and let $M(\cdot)$ be the corresponding Weyl--Titchmarsh function.
 If $z_0\in\sigma_d(A_0)\cup\rho(A_0)$ then $M$ and its derivatives $M^{(l)}(\cdot)$, $l\in\mathbb N$, are finitely meromorphic at $z_0$.
\end{lemma}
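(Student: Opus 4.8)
The plan is to distinguish the two alternatives in the hypothesis $z_0\in\sigma_d(A_0)\cup\rho(A_0)$. If $z_0\in\rho(A_0)$ there is nothing to do: $M(\cdot)$, and hence every derivative $M^{(l)}(\cdot)$, is analytic in a neighborhood of $z_0$ and is therefore trivially finitely meromorphic at $z_0$ in the sense of Definition~\ref{d2.1}. The content of the statement thus lies in the case $z_0\in\sigma_d(A_0)$, and here I would read the structure of $M(\cdot)$ near $z_0$ off the identity \eqref{s3}, combined with Examples~\ref{ex1} and~\ref{ex2}.

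First I would fix a base point $\zeta\in\rho(A_0)$, which exists because $z_0$ is isolated in $\sigma(A_0)$, so that by \eqref{s3} the function $M(\cdot)$ is, on a sufficiently small punctured disc $D(z_0;\varepsilon_0)\backslash\{z_0\}$, the sum of two pieces: the affine term $z\mapsto M(\zeta)^*+(z-\ol\zeta)\gamma(\zeta)^*\gamma(\zeta)$, which has coefficients in $\cB(\mathcal G)$ and is therefore entire; and the term $(z-\zeta)(z-\ol\zeta)\,\gamma(\zeta)^*(A_0-zI_{\cH})^{-1}\gamma(\zeta)$. Since $\gamma(\zeta)\in\cB(\mathcal G,\mathfrak H)$ and $z_0\in\sigma_d(A_0)$, Example~\ref{ex2} shows that $z\mapsto\gamma(\zeta)^*(A_0-zI_{\cH})^{-1}\gamma(\zeta)$ is finitely meromorphic at $z_0$; multiplying it by the scalar polynomial $(z-\zeta)(z-\ol\zeta)$, which is analytic at $z_0$, leaves both the finiteness of the principal part and the finite rank of its coefficients intact, so this term is again finitely meromorphic at $z_0$, and adding the entire term does not change the principal part. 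Hence $M(\cdot)$ is finitely meromorphic at $z_0$.

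To pass to the derivatives I would differentiate the norm convergent Laurent expansion of $M(\cdot)$ around $z_0$ term by term on $D(z_0;\varepsilon_0)\backslash\{z_0\}$: writing the principal part as the finite sum $\sum_{k=-N_0}^{-1}(z-z_0)^k M_k(z_0)$ with $M_k(z_0)\in\cF(\mathcal G)$, one sees that each $M^{(l)}(\cdot)$, $l\in\mathbb N$, again has a finite principal part at $z_0$ whose coefficients are scalar multiples of the finite rank operators $M_k(z_0)$, $-N_0\le k\le -1$, and hence is finitely meromorphic at $z_0$. This is exactly the mechanism already used for the resolvent of $A_0$ and its derivatives in Example~\ref{ex1}.

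I do not expect a serious obstacle in this argument. The only points requiring a modicum of care are the two elementary closure properties of the class of finitely meromorphic functions that are used implicitly, namely stability under addition and under multiplication by a scalar-valued function that is analytic at the point in question, together with the correct invocation of Example~\ref{ex2}, which in turn rests on the boundedness $\gamma(\zeta)\in\cB(\mathcal G,\mathfrak H)$ of the values of the $\gamma$-field.
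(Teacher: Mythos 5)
Your proposal is correct and follows exactly the route the paper indicates: the paper states just before Lemma~\ref{mfm} that the result "follows from \eqref{s3} and the fact that the resolvent of $A_0$ and its derivatives are finitely meromorphic at a discrete eigenvalue $z_0$ of $A_0$ (cf. Examples~\ref{ex1} and \ref{ex2})," which is precisely your decomposition into an entire affine part plus the term $(z-\zeta)(z-\ol\zeta)\gamma(\zeta)^*(A_0-zI_{\cH})^{-1}\gamma(\zeta)$, followed by term-by-term differentiation of the Laurent expansion. Your write-up simply makes explicit the closure properties (sums and scalar-analytic multiples of finitely meromorphic functions) that the paper leaves implicit.
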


In the next proposition we provide a particular boundary triple for $S^*$ such that the corresponding Weyl--Titchmarsh function coincides with the 
abstract Donoghue-type $M$-function that was studied, for instance, in  \cite{GKMT01}, \cite{GMT98}, \cite{GNWZ15}. The construction in Proposition~\ref{propcon} can be found, for instance, in \cite[Proposition 4.1]{D95}.
For the convenience of the reader we provide a short proof.

\begin{proposition}\label{propcon}
 Let $S$ be a densely defined closed symmetric operator in $\mathfrak H$ with equal deficiency indices, fix a self-adjoint extension $A$ of $S$ in $\mathfrak H$ and decompose
 the elements $f\in\dom(S^*)$ according to the direct sum decomposition
 \begin{equation}
  \dom (S^*)=\dom(A)\,\dot +\,\mathcal N_i,\quad \mathcal N_i=\ker(S^*-iI_{\cH}),
 \end{equation}
 in the form $f=f_A+f_i$, $f_A\in\dom(A)$, $f_i\in\mathcal N_i$.
 Let $P_{\mathcal N_i}:\mathfrak H\rightarrow\mathcal N_i$ be the orthogonal projection onto $\mathcal N_i$ and let $\iota_{\mathcal N_i}:\mathcal N_i\rightarrow \mathfrak H$ be 
 the canonical embedding of $\mathcal N_i$ into $\mathfrak H$.
 
 Then $\{\cN_i,\Gamma_0,\Gamma_1\}$, where the boundary maps 
 $\Gamma_0,\Gamma_1:\dom(S^*)\rightarrow\cN_i$ are defined by
 \begin{equation}\label{eee}
  \Gamma_0 f= f_i  \, \text{ and } \, \Gamma_1 f=P_{\mathcal N_i}(A+i)f_A+if_i,
 \end{equation}
 is a boundary triple for $S^*$ with $A_0=S^*\upharpoonright\ker(\Gamma_0)=A$ and the corresponding Weyl--Titchmarsh function $M(\cdot)$  is given by
\begin{equation}
 M(z) = z I_{\cN_i}+ (z^2+1) 
 P_{ \mathcal N_i} (A-zI_{\cH})^{-1} 
  \iota_{ \mathcal N_i},\quad z\in\rho(A).
\end{equation}
\end{proposition}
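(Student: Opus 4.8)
The plan is to verify, in turn, the three requirements of Definition~\ref{btdeff} for the triple $\{\mathcal N_i,\Gamma_0,\Gamma_1\}$, then to read off $A_0=A$, and finally to compute the Weyl--Titchmarsh function directly from Definition~\ref{gwdef}. That $\Gamma_0$ and $\Gamma_1$ in \eqref{eee} are well-defined linear maps into $\mathcal N_i$ is immediate, since the decomposition $\dom(S^*)=\dom(A)\dotplus\mathcal N_i$ exhibits $f_A\in\dom(A)$ and $f_i\in\mathcal N_i$ as well-defined linear functions of $f$.

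For the abstract Green identity \eqref{gid} I would rely on two elementary observations: first, $S^*f=Af_A+if_i$ for every $f=f_A+f_i\in\dom(S^*)$ (because $A\subset S^*$ and $\mathcal N_i=\ker(S^*-iI_{\cH})$), and second, $(P_{\mathcal N_i}x,n)_{\mathfrak H}=(x,n)_{\mathfrak H}$ for all $n\in\mathcal N_i$. Expanding $(S^*f,g)_{\mathfrak H}-(f,S^*g)_{\mathfrak H}$ with $f=f_A+f_i$ and $g=g_A+g_i$, the contribution $(Af_A,g_A)_{\mathfrak H}-(f_A,Ag_A)_{\mathfrak H}$ drops out by self-adjointness of $A$, leaving $(Af_A,g_i)_{\mathfrak H}+i(f_A,g_i)_{\mathfrak H}+i(f_i,g_A)_{\mathfrak H}-(f_i,Ag_A)_{\mathfrak H}+2i(f_i,g_i)_{\mathfrak H}$; expanding $(\Gamma_1f,\Gamma_0g)_{\mathcal N_i}-(\Gamma_0f,\Gamma_1g)_{\mathcal N_i}$ by means of the two observations produces precisely the same expression. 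I expect this bookkeeping to be the only place where care is genuinely needed, chiefly because of the conjugate-linearity of the inner product in its second slot; note in particular that although $\dom(A)$ and $\mathcal N_i$ are \emph{not} orthogonal in $\mathfrak H$, the cross terms arising on the two sides coincide identically, so no orthogonality is invoked.

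Next I would check that $\Gamma=\bigl[\begin{smallmatrix}\Gamma_0\\ \Gamma_1\end{smallmatrix}\bigr]$ is onto: given $(\phi,\psi)\in\mathcal N_i\times\mathcal N_i$, one needs $f=f_A+\phi$ with $f_A\in\dom(A)$ and $P_{\mathcal N_i}(A+i)f_A=\psi-i\phi$, which is solvable because $-i\in\rho(A)$ makes $A+i\colon\dom(A)\to\mathfrak H$ a bijection and $P_{\mathcal N_i}$ maps $\mathfrak H$ onto $\mathcal N_i$. This shows that $\{\mathcal N_i,\Gamma_0,\Gamma_1\}$ is a boundary triple for $S^*$. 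Since $\ker(\Gamma_0)=\{f\in\dom(S^*)\,|\,f_i=0\}=\dom(A)$, one obtains $A_0=S^*\upharpoonright\ker(\Gamma_0)=A$ at once.

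For the Weyl--Titchmarsh function, I would fix $z\in\rho(A)$ and $\phi\in\mathcal N_i$ and let $f_z\in\ker(S^*-zI_{\cH})$ be the unique element with $\Gamma_0 f_z=\phi$ (existence and uniqueness following from \eqref{dedede}). Writing $f_z=(f_z)_A+\phi$, the relation $S^*f_z=zf_z$ becomes $A(f_z)_A+i\phi=z(f_z)_A+z\phi$, i.e.\ $(A-zI_{\cH})(f_z)_A=(z-i)\phi$, so that $(f_z)_A=(z-i)(A-zI_{\cH})^{-1}\iota_{\mathcal N_i}\phi$. Hence, by Definition~\ref{gwdef} and \eqref{eee},
\[
M(z)\phi=\Gamma_1 f_z=(z-i)\,P_{\mathcal N_i}(A+i)(A-zI_{\cH})^{-1}\iota_{\mathcal N_i}\phi+i\phi,
\]
and inserting $(A+i)(A-zI_{\cH})^{-1}=I_{\cH}+(z+i)(A-zI_{\cH})^{-1}$, using $(z-i)(z+i)=z^2+1$, together with $P_{\mathcal N_i}\phi=\phi$, yields $M(z)\phi=z\phi+(z^2+1)\,P_{\mathcal N_i}(A-zI_{\cH})^{-1}\iota_{\mathcal N_i}\phi$, which is the asserted formula. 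The whole argument is a routine verification; I do not anticipate any step beyond the Green-identity bookkeeping to pose a real obstacle.
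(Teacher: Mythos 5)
Your proposal is correct. The verification of the abstract Green identity and of the surjectivity of $\Gamma=\bigl[\begin{smallmatrix}\Gamma_0\\ \Gamma_1\end{smallmatrix}\bigr]$ proceeds exactly as in the paper: the same expansion of $(S^*f,g)_{\mathfrak H}-(f,S^*g)_{\mathfrak H}$ using $S^*f=Af_A+if_i$ and self-adjointness of $A$, the same use of $(P_{\mathcal N_i}x,n)_{\mathfrak H}=(x,n)_{\mathfrak H}$ for $n\in\mathcal N_i$, and the same solvability argument based on $-i\in\rho(A)$ (the paper simply picks $f_A$ with $(A+iI_{\cH})f_A=\psi-i\varphi$ outright, which is a special case of your composition argument). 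Where you genuinely diverge is the computation of $M(\cdot)$: the paper first identifies $\gamma(i)=\iota_{\mathcal N_i}$, $\gamma(i)^*=P_{\mathcal N_i}$, and $M(i)=iI_{\cN_i}$, and then substitutes $\zeta=i$ into the general identity \eqref{s3}, whereas you solve $(A-zI_{\cH})(f_z)_A=(z-i)\phi$ for the defect element directly and read off $\Gamma_1 f_z$. Your route is more self-contained (it does not presuppose \eqref{s1}--\eqref{s3}), at the cost of a small resolvent manipulation; the paper's route is shorter given that \eqref{s3} has already been recorded, and has the side benefit of exhibiting $\gamma(i)$ and $M(i)$ explicitly, which is what links the construction to the Donoghue-type $M$-functions the appendix is aiming at. Both computations yield the asserted formula, so this is a matter of economy rather than substance.
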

\begin{proof} 
Let $f,g\in\dom(S^*)$ be decomposed in the form $=f_A+f_i$ and $g=g_A+g_i$, where $f_A,g_A\in\dom( A)$ and $f_i,g_i\in\mathcal N_i$. 
Since $A$ is self-adjoint in $\mathfrak H$  we have $(Af_A,g_A)_{\mathfrak H}=(f_A,A g_A)_{\mathfrak H}$ and it follows that
\begin{equation}\label{sd}
\begin{split}
 &(S^*f,g)_{\mathfrak H}-(f,S^*g)_{\mathfrak H}\\
 &\qquad=\bigl(Af_A+ i f_i,g_A+g_i\bigr)_{\mathfrak H}
-\bigl(f_A+f_i,A g_A + i g_i\bigr)_{\mathfrak H}\\
&\qquad=\bigl(A f_A+ i f_i,g_i\bigr)_{\mathfrak H}+(i f_i,g_A)_{\mathfrak H}
-\bigl(f_i,A g_A+ i g_i\bigr)_{\mathfrak H}-(f_A,i g_i)_{\mathfrak H}\\
 &\qquad=\bigl((A+iI_{\cH})f_A+ i f_i,g_i\bigr)_{\mathfrak H}
-\bigl(f_i,(A+iI_{\cH}) g_A+i g_i\bigr)_{\mathfrak H}.
\end{split}
\end{equation}
Moreover, it follows from  the definition of the boundary maps in \eqref{eee}  that
\begin{equation}\label{ds}
\begin{split}
&(\Gamma_1 f,\Gamma_0 g)_{\mathcal N_i}-(\Gamma_0 f,\Gamma_1 g)_{\mathcal N_i}\\
&\qquad=\bigl( P_{\mathcal N_i} (A+iI_{\cH})f_A+ i f_i, g_i\bigr)_{\mathcal N_i} -
\bigl(f_i, P_{\mathcal N_i} (A+iI_{\cH}) g_A+ i g_i\bigr)_{\mathcal N_i} \\
&\qquad=\bigl((A+iI_{\cH}) f_A+ i f_i, g_i\bigr)_{\mathfrak H}-
\bigl(f_i, (A+iI_{\cH})g_A + i g_i\bigr)_{\mathfrak H}.
\end{split}
\end{equation}
Therefore, by combining \eqref{sd} and \eqref{ds} we conclude
\begin{equation}
 (S^*f,g)_{\mathfrak H}-(f,S^*g)_{\mathfrak H}=(\Gamma_1 f,\Gamma_0 g)_{\mathcal N_i}-(\Gamma_0 f,\Gamma_1 g)_{\mathcal N_i},
\end{equation}
and hence the abstract Green's identity \eqref{gid} in Definition~\ref{btdeff} is satisfied.

Next we verify that the map 
\begin{equation}\label{surj}
 \Gamma=\left[\begin{matrix}\Gamma_0\\[1mm] \Gamma_1\end{matrix}\right]:\dom(S^*)\rightarrow\mathcal G\times\mathcal G
\end{equation}
is surjective.
To see  this consider $\varphi, \psi\in\mathcal N_i$, choose
$f_A\in\dom(A)$ such that
\begin{equation}
(A+iI_{\cH}) f_A=\psi- i \varphi,
\end{equation} 
and let $f=f_A+\varphi\in\dom( S^*)$. It follows from \eqref{eee} that
\begin{equation}
 \Gamma_0 f=\varphi\, \text{ and } \,  
 \Gamma_1 f= 
 P_{\mathcal N_i}(A+iI_{\cH})f_A+i\varphi=\psi.
\end{equation}
Hence the map in \eqref{surj} is onto and it follows that 
$\{\cN_i,\Gamma_0,\Gamma_1\}$ is a boundary triple for $S^*$. It is clear from the construction that
$A_0=S^*\upharpoonright\ker(\Gamma_0)=A$ holds.

It remains to show that the Weyl--Titchmarsh function corresponding to the boundary triple  $\{\cN_i,\Gamma_0,\Gamma_1\}$ has the asserted form. For this
consider first $f_i\in\mathcal N_i$ and note that for $f=f_i$ the abstract boundary values in \eqref{eee}
are given by
\begin{equation}
\Gamma_0 f_i=f_i \, \text{ and } \, 
\Gamma_1 f_i=i f_i.
\end{equation}
Therefore, Definition \ref{gwdef} implies
\begin{equation}
\gamma(i):\mathcal N_i\rightarrow \mathfrak H,\quad f_i\mapsto \gamma(i)f_i=f_i,
\end{equation}
that is, $\gamma(i)$ is the canonical embedding of 
$\mathcal N_i$ into $\mathfrak H$,
\begin{equation}
\gamma(i)=\iota_{\mathcal N_i},
\end{equation}
and $\gamma(i)^*:\mathfrak H\rightarrow\mathcal N_i$ is the orthogonal projection onto $\mathcal N_i$, that is, $\gamma(i)^*=P_{\mathcal N_i}$. 
Furthermore, Definition \ref{gwdef} also implies
\begin{equation}
M(i):\mathcal G\rightarrow\mathcal G,\quad f_i\mapsto M(i)f_i=if_i,
\end{equation}
that is, $M(i)=i I_{\cN_i}$. Next, it follows from \eqref{s3} with $\zeta=i$ that
\begin{equation}
 M(z)= z I_{\cN_i} + (z^2+1)  P_{ \mathcal N_i} (A-zI_{\cH})^{-1} \iota_{ \mathcal N_i}
\end{equation}
holds for all $z\in\rho(A)$, completing the proof of Proposition~\ref{propcon}.
\end{proof}

Finally, we recall a useful version of Krein's resolvent formula for the resolvents of the closed 
extensions $A_\Theta$ in \eqref{bij}--\eqref{bij2}, which also provides a correspondence between 
the spectrum of $A_\Theta$ inside the set $\rho(A_0)$ and the spectrum of $\Theta-M(\cdot)$.

\begin{theorem}\label{kreinthm}
 Let $\{\mathcal G,\Gamma_0,\Gamma_1\}$ be a boundary triple for $S^*$ with $A_0=S^*\upharpoonright\ker(\Gamma_0)$, and let $\gamma(\cdot)$ and $M(\cdot)$ be the corresponding
 $\gamma$-field and Weyl--Titchmarsh function, respectively. Let $A_\Theta\subset S^*$ be a closed extension of $S$ which corresponds to a closed operator or subspace 
 $\Theta$ as in \eqref{bij}--\eqref{bij2}. Then the following assertions hold for all $z\in\rho(A_0)$:
 \\[1mm] 
 $(i)$ $z\in\rho(A_\Theta)$ if and only if $0\in\rho(\Theta-M(z))$. \\[1mm] 
 $(ii)$ $z\in\sigma_j(A_\Theta)$ if and only if $0\in\sigma_j(\Theta-M(z))$, $j\in \{p,c,r\}$. \\[1mm] 
 $(iii)$ for all $z\in\rho(A_\Theta)\cap\rho(A_0)$,
 \begin{equation} 
 (A_\Theta-zI_{\cH})^{-1}=(A_0-zI_{\cH})^{-1}+\gamma(z)\bigl(\Theta-M(z)\bigr)^{-1}\gamma(\ol z)^*.
 \end{equation} 
\end{theorem}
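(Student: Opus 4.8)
The plan is to prove part $(ii)$ first; part $(i)$ then follows by complementation, since for a closed operator the complex plane is the disjoint union of the resolvent set and the point, continuous and residual spectra, and the same trichotomy holds for the closed operator $\Theta-M(z)$ (which is closed because $\Theta$ is closed and $M(z)\in\mathcal B(\mathcal G)$); part $(iii)$ will then be extracted from $(i)$. Throughout I fix $z\in\rho(A_0)$ and work in the operator case $A_\Theta=S^*\upharpoonright\ker(\Gamma_1-\Theta\Gamma_0)$ as in \eqref{bij2}, the linear-relation case requiring only the cosmetic replacement of $\Gamma_1 f=\Theta\Gamma_0 f$ by $\{\Gamma_0 f,\Gamma_1 f\}\in\Theta$ as in \eqref{bij}.

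The two facts that drive everything come from the decomposition \eqref{dedede}, the identities $\Gamma_0\gamma(z)=I_{\cH}$, $\Gamma_1\gamma(z)=M(z)$, and $\Gamma_1(A_0-zI_{\cH})^{-1}=\gamma(\ol z)^*$. \emph{Fact (a):} any $f\in\dom(S^*)$ with $(S^*-zI_{\cH})f=g$ is uniquely of the form $f=(A_0-zI_{\cH})^{-1}g+\gamma(z)\psi$ with $\psi=\Gamma_0 f$, and then $\Gamma_1 f=\gamma(\ol z)^*g+M(z)\psi$; hence such an $f$ lies in $\dom(A_\Theta)$ if and only if $\psi\in\dom(\Theta)$ and $(\Theta-M(z))\psi=\gamma(\ol z)^*g$ (note $\dom(\Theta-M(z))=\dom(\Theta)$ since $M(z)$ is bounded). \emph{Fact (b):} the special case $g=0$ shows that $\gamma(z)$ restricts to a bijection of $\ker(\Theta-M(z))$ onto $\ker(A_\Theta-zI_{\cH})$, which already yields the point-spectrum equivalence and $\ker(A_\Theta-zI_{\cH})=\{0\}\Leftrightarrow\ker(\Theta-M(z))=\{0\}$.

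Next I would transfer the range condition. By Fact (a), for $g\in\mathfrak H$ one has $g\in\ran(A_\Theta-zI_{\cH})$ if and only if $\gamma(\ol z)^*g\in\ran(\Theta-M(z))$, i.e. $\ran(A_\Theta-zI_{\cH})=(\gamma(\ol z)^*)^{-1}\bigl(\ran(\Theta-M(z))\bigr)$. The key observation is that $\gamma(\ol z)\colon\mathcal G\to\ker(S^*-\ol zI_{\cH})$ is a bounded bijection between Hilbert spaces, hence bounded below, so $\gamma(\ol z)^*\colon\mathfrak H\to\mathcal G$ is a bounded surjection and an open map. From this the three statements $\ran(A_\Theta-zI_{\cH})=\mathfrak H$, $\overline{\ran(A_\Theta-zI_{\cH})}=\mathfrak H$, $\ran(A_\Theta-zI_{\cH})\subsetneq\mathfrak H$ are equivalent to the corresponding statements for $\ran(\Theta-M(z))$ in $\mathcal G$. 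Combining with Fact (b), the defining conditions for $z\in\sigma_p(A_\Theta)$, $z\in\sigma_c(A_\Theta)$, $z\in\sigma_r(A_\Theta)$ match term by term those for $0\in\sigma_p(\Theta-M(z))$, $0\in\sigma_c(\Theta-M(z))$, $0\in\sigma_r(\Theta-M(z))$, proving $(ii)$, whence $(i)$; the upgrade from ``bijective'' to ``boundedly invertible'' for both $A_\Theta-zI_{\cH}$ and $\Theta-M(z)$ is the closed graph theorem.

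For $(iii)$, fix $z\in\rho(A_\Theta)\cap\rho(A_0)$; by $(i)$ we have $(\Theta-M(z))^{-1}\in\mathcal B(\mathcal G)$, so $R(z):=(A_0-zI_{\cH})^{-1}+\gamma(z)(\Theta-M(z))^{-1}\gamma(\ol z)^*$ is a well-defined operator in $\mathcal B(\mathfrak H)$. Given $g\in\mathfrak H$, set $\psi:=(\Theta-M(z))^{-1}\gamma(\ol z)^*g$ and $f:=(A_0-zI_{\cH})^{-1}g+\gamma(z)\psi=R(z)g$; then $(\Theta-M(z))\psi=\gamma(\ol z)^*g$, so Fact (a) gives $f\in\dom(A_\Theta)$ with $(A_\Theta-zI_{\cH})f=(S^*-zI_{\cH})f=g$, i.e. $f=(A_\Theta-zI_{\cH})^{-1}g$; since $g$ was arbitrary, $(A_\Theta-zI_{\cH})^{-1}=R(z)$. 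I expect the main obstacle to be not a hard estimate but the bookkeeping in the range step: one must genuinely verify that $\gamma(\ol z)^*$ is onto $\mathcal G$ (otherwise the range correspondence fails to be an honest equivalence for $\sigma_c$ and $\sigma_r$), and keep the unbounded parameter $\Theta$ under control, the decisive points being $\dom(\Theta-M(z))=\dom(\Theta)$ and the closedness of $\Theta$; the linear-relation generality of $\Theta$ I would dispatch with a remark rather than carry through in detail.
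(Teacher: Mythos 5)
Your argument is correct. The paper itself gives no proof of Theorem \ref{kreinthm} (it is recalled as a known result from the boundary-triple literature, e.g.\ \cite{DM91}, \cite{DM95}, \cite{BL12}), and your derivation is exactly the standard one: the direct sum decomposition \eqref{dedede} yields your Fact~(a), the kernel correspondence via $\gamma(z)$ handles $\sigma_p$, the surjectivity and openness of $\gamma(\ol z)^*$ (a consequence of $\gamma(\ol z)$ being a bounded bijection onto the closed subspace $\ker(S^*-\ol z I_{\cH})$) transfers the range conditions for $\sigma_c$ and $\sigma_r$, and the closed graph theorem upgrades bijectivity to bounded invertibility on both sides; the only point deserving the extra remark you promise is the linear-relation case of $\Theta$, where $\ker$, $\ran$, and the spectral parts must be read in the sense of relations, but the argument is unchanged.
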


\medskip
\noindent {\bf Acknowledgments.} 
F.G.\ gratefully acknowledges kind invitations to the Institute for Numerical Mathematics 
at the Graz University of Technology, Austria, and to the Department of Mathematical Sciences 
of the Norwegian University of Science and Technology, Trondheim, for parts of May and June 2015. The extraordinary hospitality by Jussi Behrndt and Helge Holden at each institution, as well as the stimulating atmosphere at both places, are greatly appreciated. 



\begin{thebibliography}{99}
%
\bi{AB09} D.\ Alpay and J.\ Behrndt, {\it Generalized $Q$-functions and 
Dirichlet-to-Neumann maps for elliptic differential operators}, J. Funct. Anal. 
{\bf 257}, 1666--1694 (2009).   
%
\bi{AP04} W.\ O.\ Amrein and D.\ B.\ Pearson, {\it $M$ operators: a generalization 
of Weyl--Titchmarsh theory}, J. Comp. Appl. Math. {\bf 171}, 1--26 (2004).
%
\bi{AE11} W.\ Arendt, A.\ F.\ M.\ ter Elst, {\it The Dirichlet-to-Neumann operator on rough domains}, J. Diff. Eq. {\bf 251}, 2100--2124 (2011).
%
\bi{AEKS14} W.\ Arendt, A.\ F.\ M.\ ter Elst, J.\ B.\ Kennedy, M.\ Sauter, {\it The
  Dirichlet-to-Neumann operator via hidden compactness}, J. Funct. Anal. {\bf 266}, 1757--1786 (2014).
%
\bibitem{AM12}
W.\ Arendt, R.\ Mazzeo, {\it Friedlander's eigenvalue inequalities and the
  Dirichlet-to-Neumann semigroup}, Commun. Pure Appl. Anal. {\bf 11}, 2201--2212 (2012).
%
\bi{BGHN16} J.\ Behrndt, F.\ Gesztesy, H.\ Holden, and R.\ Nichols, {\it On the index of 
meromorphic operator-valued functions and some applications}, \arxiv{1512.06962}, 
in {\it Functional Analysis and Operator Theory for Quantum Physics}, 
J.\ Dittrich, H.\ Kovarik, and A.\ Laptev (eds.), EMS Publishing House, EMS, ETH--Z\"urich, 
Switzerland (to appear). 
%
\bi{BGMM16} J.\ Behrndt, F.\ Gesztesy, T.\ Micheler, and M.\ Mitrea, {\it Sharp boundary 
trace theory and Schr\"{o}dinger operators on bounded Lipschitz domains}, in preparation.
%
\bi{BL07} J.\ Behrndt and M.\ Langer, {\it Boundary value problems for elliptic partial
differential operators on bounded domains}, J. Funct. Anal. {\bf 243}, 536--565 (2007).  
%
\bi{BL12} J.\ Behrndt and M.\ Langer, {\it Elliptic operators, Dirichlet-to-Neumann maps and 
quasi boundary triples}, in {\it Operator Methods for Boundary Value Problems}, 
S.\ Hassi, H.\ S.\ V.\ de Snoo, and F.\ H.\ Safraniec (eds.), 
London Math. Soc. Lecture Note Series, Vol.\ 404, Cambridge University Press, 
Cambridge, 2012, pp.\ 121--160.
%
\bi{BM14} J.\ Behrndt and T.\ Micheler, {\it Elliptic differential operators on Lipschitz 
domains and abstract boundary value problems}, J. Funct. Anal. {\bf 267}, 
3657--3709 (2014).
%
\bi{BR15a} J.\ Behrndt and J.\ Rohleder, {\it Spectral analysis of selfadjoint elliptic 
differential operators, Dirichlet-to-Neumann maps, and abstract Weyl functions}, 
Adv. Math. {\bf 285}, 1301--1338 (2015).
%
\bi{BMN02} J.\ F.\ Brasche, M.\ M.\ Malamud, and H.\ Neidhardt, 
{\it Weyl function and spectral properties of self-adjoint extensions}, 
Integral Eqs. Operator Theory {\bf 43}, 264--289 (2002).
%
\bi{BGW09} B.\ M.\ Brown, G.\ Grubb, and I.\ G.\ Wood, {\it $M$-functions 
for closed extensions of adjoint pairs of operators with applications to 
elliptic boundary problems}, Math. Nachr. {\bf 282}, 314--347 (2009). 
%
\bi{BMNW08} B.\ M.\ Brown, M.\ Marletta, S.\ Naboko, and I.\ Wood,
{\it Boundary triples and $M$-functions for non-selfadjoint operators,
with applications to elliptic PDEs and block operator matrices},
J.\ London Math.\ Soc.\ (2) {\bf 77}, 700--718 (2008). 
%
\bi{BNMW14} B.\ M.\ Brown, M.\ Marletta, S.\ Naboko, and I.\ Wood, {\it An abstract 
inverse problem for boundary triples with an application to the Friedrichs model}, 
\arxiv{1404.6820}.
%
\bi{Br76} V.\ M.\ Bruk,
{\it A certain class of boundary value problems with a spectral parameter in the
boundary condition}, 
Math.\ USSR-Sb. {\bf 29}, 186--192 (1976).
%
\bi{BGP08}  J.\ Br\"uning, V.\ Geyler, and K.\ Pankrashkin, 
{\it Spectra of self-adjoint extensions and applications to solvable Schr\"odinger operators}, 
Rev.\ Math.\ Phys.\ {\bf 20}, 1--70 (2008).
%
\bi{D95} V.\ A.\ Derkach,
{\it On Weyl function and generalized resolvents of a Hermitian operator in a Krein space}, 
Integral Equations Operator Theory {\bf 23}, 387--415 (1995).
%
\bi{DHMS06} V.\ A.\ Derkach, S.\ Hassi, M.\ M.\ Malamud, and
H.\ S.\ V.\ de Snoo, {\it Boundary relations and their Weyl families}, Trans. Amer.
Math. Soc. {\bf 358}, 5351--5400 (2006).
%
\bi{DHMS09} V.\ A.\ Derkach, S.\ Hassi, M.\ M.\ Malamud, and H.\ S.\ V.\
de Snoo, {\it Boundary relations and generalized resolvents of symmetric operators}, Russ. J. Math. Phys. {\bf 16}, 17--60 (2009).
%
\bi{DM91} V.\ A.\ Derkach and M.\ M.\ Malamud, {\it Generalized
resolvents and the boundary value problems for Hermitian operators
with gaps}, J. Funct. Anal. {\bf 95}, 1--95 (1991).
%
\bi{DM95} V.\ A.\ Derkach and M.\ M.\ Malamud, {\it The extension
theory of Hermitian operators and the moment problem}, J. Math. Sci.
{\bf 73}, 141--242 (1995).
%
\bi{DMT88} V.\ A.\ Derkach, M.\ M.\ Malamud, and E.\ R.\ Tsekanovskii,
{\it Sectorial extensions of a positive operator, and the characteristic
function}, Sov. Math. Dokl.  {\bf 37}, 106--110 (1988).
%
\bi{Do65} W.\ F.\ Donoghue, {\it On the perturbation of spectra},
Commun. Pure Appl. Math. {\bf 18}, 559-579 (1965).
%
\bi{GHN15} F.\ Gesztesy, H.\ Holden, and R.\ Nichols, {\it On factorizations of analytic operator-valued functions and eigenvalue multiplicity questions}, 
Integral Eqs. Operator Theory {\bf 82}, 61--94 (2015).
%
\bi{GKMT01} F.\ Gesztesy, N.J.\ Kalton, K.A.\ Makarov, and E.\ Tsekanovskii,
{\it Some applications of operator-valued Herglotz functions},
in {\it Operator Theory, System Theory and Related Topics. The Moshe Liv{\v s}ic Anniversary 
Volume}, D.\ Alpay and V.\ Vinnikov (eds.), 
Oper.\ Theory Adv. Appl., Vol.\ 123, Birkh\"auser, Basel, 2001, pp.\ 271--321.
%
\bi{GMT98} F.\ Gesztesy, K.\ A.\ Makarov, E.\ Tsekanovskii,
{\it An Addendum to Krein's formula}, J. Math. Anal. Appl.
{\bf 222}, 594--606 (1998).
%
\bi{GM08} F.\ Gesztesy and M.\ Mitrea, {\it Generalized Robin boundary
conditions, Robin-to-Dirichlet maps, and Krein-type resolvent formulas
for Schr\"odinger operators on bounded Lipschitz domains}, in 
{\it  Perspectives in Partial Differential Equations, Harmonic Analysis
and Applications: A Volume in Honor of Vladimir G. Maz'ya's 70th Birthday}, 
D.\ Mitrea and M.\ Mitrea (eds.), Proceedings of 
Symposia in Pure Mathematics, Vol.\ 79, Amer.\ Math.\ Soc., Providence, 
RI, 2008, pp.\ 105--173.
%
\bi{GM11} F.\ Gesztesy and M.\ Mitrea, {\it Self-adjoint extensions
of the Laplacian and Krein-type resolvent formulas in nonsmooth domains}, 
J. Analyse Math. {\bf 113}, 53--172 (2011).
%
\bi{GNWZ15} F.\ Gesztesy, S.\ N.\ Naboko, R.\ Weikard, and M.\ Zinchenko, 
{\it Donoghue-type $m$-functions for Schr\"odinger operators with operator-valued 
potentials}, 
\arxiv{1506.06324}, J. Analyse Math. (to appear). 
%
\bi{GT00} F.\ Gesztesy and E.\ Tsekanovskii, {\it On  matrix-valued
Herglotz functions}, Math. Nachr. {\bf 218}, 61--138 (2000).
%
\bi{GGK90} I.\ Gohberg, S.\ Goldberg, and M.\ A.\ Kaashoek, {\it Classes
of Linear Operators, Vol.\ I}, Operator Theory: Advances and
Applications, Vol.\ 49, Birkh\"auser, Basel, 1990.
%
\bi{GK69} I.\ Gohberg and and M.\,G.~Kre\u\i{}n,
\textit{Introduction to the Theory of Linear Nonselfadjoint Operators}.
Transl.\ Math.\ Monogr., Vol.~18.,
Amer.\ Math.\ Soc., Providence, RI, 1969.
%
\bi{GL09} I.\ Gohberg and J.\ Leiterer, {\it Holomorphic Operator Functions of One Variable 
and Applications}, Operator Theory: Advances and Applications, Vol.\ 192, Birkh\"auser, 
Basel, 2009. 
%
\bi{GS71} I.\ C.\ Gohberg and E.\ I.\ Sigal, {\it An operator generalizations 
of the logarithmic residue theorem and the theorem of Rouch\'e}, Math. USSR 
Sbornik {\bf 13}, 603--625 (1971). 
%
\bi{GG91} V.\ I.\ Gorbachuk and M.\ L.\ Gorbachuk,
{\it Boundary Value Problems for Operator Differential Equations},
Kluwer Academic Publishers, Dordrecht, 1991.
%
\bi{HMM13} S.\ Hassi, M.\ M.\ Malamud, and V.\ Mogilevskii, {\it Unitary equivalence 
of proper extensions of a symmetric operator and the Weyl function}, Integral Equ. 
Operator Theory {\bf 77}, 449--487 (2013).  
%
\bi{JK95} D.\ Jerison and C.\ Kenig, {\it The inhomogeneous
Dirichlet problem in Lipschitz domains}, J. Funct. Anal. {\bf 130},
161--219 (1995).
%
\bi{KK74}
I.\ S.\ Kac and M.\ G.\ Krein,
{\it $R$-functions -- analytic functions mapping the upper halfplane into itself},
Supplement to the Russian edition of F.V.~Atkinson,
\textit{Discrete and continuous boundary problems}, Mir, Moscow 1968; 
Engl. transl. in Amer. Math. Soc. Transl. Ser. 2 {\bf 103}, 1--18 (1974).
%
\bi{Ka80} T.\ Kato, {\it Perturbation Theory for Linear Operators},
corr. printing of the 2nd ed., Springer, Berlin, 1980.
%
\bi{Ko75} A.\ N.\ Kochubei,
{\it Extensions of symmetric operators and symmetric binary relations},
Math.\ Notes {\bf 17}, 25--28 (1975).
%
\bi{KO77} M.\ G.\ Krein and I.\ E.\ Ov\v carenko, {\it $Q$-functions and
sc-resolvents of nondensely defined Hermitian contractions}, Sib. Math. J.
{\bf 18}, 728--746 (1977).
%
\bi{KO78} M.\ G.\ Krein and I.\ E.\ Ov\v carenko, {\it Inverse problems for
$Q$-functions and resolvent matrices of positive Hermitian operators},
Sov. Math. Dokl. {\bf 19}, 1131--1134 (1978).
%
\bi{LT77} H.\ Langer and B.\ Textorius, {\it On generalized resolvents and 
$Q$-functions of symmetric linear relations $($subspaces\,$)$ in Hilbert space},
Pacific J. Math. {\bf 72}, 135--165 (1977). 
%
\bi{Ma92a} M.\ M.\ Malamud, {\it Certain classes of extensions of a lacunary
Hermitian operator}, Ukrain. Math. J. {\bf 44}, 190--204 (1992).
%
\bi{Ma92b} M.\ M.\ Malamud, {\it On a formula of the generalized resolvents of a
nondensely defined hermitian operator}, Ukrain. Math. J. {\bf 44}, 1522--1547 (1992).
%
\bi{Ma10} M.\ M.\ Malamud, {\it Spectral theory of elliptic operators in exterior domains}, Russ.
J. Math. Phys. {\bf 17}, 96--125 (2010).
%
\bi{MM02} M.\ M.\ Malamud and V.\ I.\ Mogilevskii, {\it Krein type formula for canonical resolvents 
of dual pairs of linear relations}, Meth. Funct. Anal. Top. {\bf 8}, 72--100 (2002). 
%
\bi{MM03} M.\ M.\ Malamud and V.\ I.\ Mogilevskii, {\it Generalized resolvents of symmetric 
operators}, Math. Notes {\bf 73}, 429--435 (2003). 
%
\bi{MN11} M.\ M.\ Malamud and H.\ Neidhardt, {\it On the unitary equivalence of
absolutely continuous parts of self-adjoint extensions}, J. Funct. Anal. {\bf 260},
613--638 (2011).
%
\bi{MN12} M.\ M.\ Malamud and H.\ Neidhardt, {\it Sturm--Liouville boundary value
problems with operator potentials and unitary equivalence}, J. Diff. Eq. {\bf 252},
5875--5922 (2012).
%
\bi{Mo09} V.\ Mogilevskii, {\it Boundary triples and Weyl--Titchmarsh functions
of differential operators with arbitrary deficiency indices}, Meth. Funct. Anal.
Topology {\bf 15}, 280--300 (2009). 
%
\bi{Pa06} K.\ Pankrashkin, {\it Resolvents of self-adjoint extensions with mixed boundary conditions}, Rep. Math. Phys. {\bf 58}, 207--221 (2006).
%
\bi{Pa13} K.\ Pankrashkin, {\it An example of unitary equivalence between 
self-adjoint extensions and their parameters}, J. Funct. Anal. {\bf 265}, 2910--2936 
(2013).  
%
\bi{Po04} A.\ Posilicano, {\it Boundary triples and Weyl functions for singular 
perturbations of self-adjoint operators}, Meth. Funct. Anal. Topology {\bf 10}, 
57--63 (2004). 
%
\bi{Po08} A.\ Posilicano, {\it Self-adjoint extensions of restrictions}, 
Operators and Matrices, Operators and Matrices {\bf 2}, 483--506 (2008).   
%
\bi{PR09} A.\ Posilicano and L.\ Raimondi, {\it Krein's resolvent formula for self-adjoint
 extensions of symmetric second-order elliptic differential operators}, J. Phys. A {\bf 42}, 
 015204 (2009).
%
\bi{Po12} O.\ Post, {\it Boundary pairs associated with quadratic forms}, 
\arxiv{1210.4707}.
%
\bi{Ry07} V.\ Ryzhov, {\it A general boundary value problem and its 
Weyl function}, Opuscula Math. {\bf 27}, 305--331 (2007).
%
\bi{Sa65} Sh.\ N.\ Saakjan, {\it Theory of resolvents of a symmetric operator with infinite 
defect numbers}, Akad. Nauk. Armjan. SSR Dokl., {\bf 41}, 193--198 (1965). (Russian.)
%
\end{thebibliography}
\end{document}